
\documentclass[12pt]{amsart}%
\usepackage{amsfonts}
\usepackage{amsmath}
\usepackage{graphicx}
\usepackage{amssymb}%
\setcounter{MaxMatrixCols}{30}
\providecommand{\U}[1]{\protect\rule{.1in}{.1in}}
\setlength{\textwidth}{16truecm}\oddsidemargin=-0.1truecm
\evensidemargin=-0.1truecm \setlength{\textheight}{20cm}
\newtheorem{theorem}{Theorem}

\newtheorem{example}[theorem]{Example}

\newtheorem{lemma}[theorem]{Lemma}

\newtheorem{proposition}[theorem]{Proposition}
\newtheorem{remark}[theorem]{Remark}

\begin{document}
\title{Polyharmonic functions of infinite order \\on annular regions}
\author[O. Kounchev and H. Render]{Ognyan Kounchev and Hermann Render}
\address{O. Kounchev: Institute of Mathematics and Informatics, Bulgarian Academy of
Sciences, 8 Acad. G. Bonchev Str., 1113 Sofia, Bulgaria and IZKS, University
of Bonn.\\
H. Render: School of Mathematical Sciences, University College Dublin, Dublin
4, Ireland.}
\email{hermann.render@ucd.ie}
\thanks{2000 Mathematics Subject Classification: \emph{Primary: 31B30 , Secondary:
32A07, 42C15}}
\thanks{Key words and phrases: Polyharmonic function, annular region, Fourier-Laplace
series, Linear differential operator with constant coefficient, Taylor series,
analytical extension. }
\thanks{The first author was partially supported by the Alexander von Humboldt
Foundation. The second author is partially supported by Grant
MTM2009-12740-C03-03 of the D.G.I. of Spain. Both authors acknowledge support
within the Project DO-02-275 Astroinformatics with the NSF of Bulgarian
Ministry of Education and Science.}

\begin{abstract}
Polyharmonic functions $f$ of infinite order and type $\tau$ on annular
regions are systematically studied. The first main result states that the
Fourier-Laplace coefficients $f_{k,l}\left(  r\right)  $ of a polyharmonic
function $f$ of infinite order and type $0$ can be extended to analytic
functions on the complex plane cut along the negative semiaxis. The second
main result gives a constructive procedure via Fourier-Laplace series for the
analytic extension of a polyharmonic function on annular region $A\left(
r_{0},r_{1}\right)  $ of infinite order and type less than $1/2r_{1}$ to the
kernel of the harmonicity hull of the annular region. The methods of proof
depend on an extensive investigation of Taylor series with respect to linear
differential operators with constant coefficients.

\end{abstract}
\maketitle

\section{Introduction}

Let $G$ be a domain in the euclidean space $\mathbb{R}^{d}.$ A function
$f:G\rightarrow\mathbb{C}$ is called polyharmonic of order $p$ if $f$ is $2p$
times continuously differentiable and $\Delta^{p}f\left(  x\right)  =0$ for
all $x\in G,$ where $\Delta=\partial^{2}/\partial x_{1}^{2}+\cdots
+\partial^{2}/\partial x_{d}^{2}$ is the Laplace operator and $\Delta^{p}$ the
$p$-th iterate of $\Delta.$ Polyharmonic functions have been investigated by
several authors (see e.g.\ \cite{Alma99}, \cite{Eden}, \cite{EnPe96},
\cite{FKM01}, \cite{FKM03}, \cite{Hayman}, \cite{kounchev92},
\cite{kounchev98}, \cite{Ligo88}, \cite{Nico35}, \cite{Render08},
\cite{Render08b}, \cite{Sob}) and they have recently many applications in
approximation theory, radial basis functions and wavelet analysis (see
e.g.\ \cite{BBRV05}, \cite{Koun00}, \cite{KoReCM}, \cite{kounchevrenderJAT},
\cite{MaNe90}).

Aronszaijn introduced in 1935 the concept of a polyharmonic function of
\emph{infinite order} (see \cite{Aron35} and \cite{Lelo46}). On the one hand,
this class of functions contains the class of classical polyharmonic functions
of all finite orders $p,$ and on the other hand it retains many properties of
the latter class, e.g.\ analytic extendibility to the harmonicity hull; the
monograph \cite{ACL83} is devoted to this subject and additional information
can be found in the research book of Avanissian \cite{Avan85}. Important
examples are eigenfunctions of the Laplacian, i.e., functions satisfying the
equation $\Delta f\left(  x\right)  =\lambda f\left(  x\right)  $ for some
$\lambda\in\mathbb{C},$ or so-called \emph{metaharmonic} functions
(e.g.\ \cite{Veku43}).

Let us recall that a function $f:G\rightarrow\mathbb{C}$ is \emph{polyharmonic
of infinite order and type} $\tau\geq0$ if, for any compact set $K\subset G$
and for all $\varepsilon>0,$ there exists a constant $C_{K,\varepsilon}>0$
such that
\begin{equation}
\max_{x\in K}\left\vert \Delta^{p}f\left(  x\right)  \right\vert \leq
C_{K,\varepsilon}\left(  2p\right)  !\left(  \tau+\varepsilon\right)
^{2p}\text{ } \label{eqpolyinf}%
\end{equation}
for all natural numbers $p.$ An equivalent way to express this inequality is
to require that for any compact subset $K$ of $G$ the inequality
\begin{equation}
\overline{\lim_{p\rightarrow\infty}}\max_{x\in K}\sqrt[2p]{\frac{\left\vert
\Delta^{p}f\left(  x\right)  \right\vert }{\left(  2p\right)  !}}\leq
\tau\label{eqlims}%
\end{equation}
holds. \cite[Theorem 1.4]{Avan85} characterizes real-analyticity in terms of
estimates of the Laplacian. Namely, an infinitely differentiable function
$f:G\rightarrow\mathbb{C}$ is real-analytic if and only if for any compact
subset $K$ of $G$ there exists a constant $C_{K}$ and a constant $\tau_{K}$
such that
\begin{equation}
\max_{x\in K}\left\vert \Delta^{p}f\left(  x\right)  \right\vert \leq
C_{K}\left(  2p\right)  !\left(  \tau_{K}\right)  ^{2p} \label{realana}%
\end{equation}
for all natural numbers $p.$ Thus polyharmonic functions of infinite order and
type $\tau$ are real-analytic and they allow the explicit control of the
constant $\tau_{K}$ in (\ref{realana}).

In the present paper we shall study polyharmonic functions of infinite order
on the annular region
\[
A\left(  r_{0},r_{1}\right)  :=\left\{  x\in\mathbb{R}^{d};r_{0}<\left\vert
x\right\vert <r_{1}\right\}  \text{ for }0\leq r_{0}<r_{1}\leq\infty.
\]
In this case tools from harmonic analysis, like the Fourier-Laplace series,
are available. Our first goal is to describe properties of the Fourier-Laplace
coefficients $f_{k,l}$ of a polyharmonic function $f$ of infinite order. Let
us recall some basic notations: Let
\[
Y_{k,l}\left(  x\right)  ,\qquad\text{for }l=1,..,a_{k},
\]
be an orthonormal basis of the $a_{k}$-dimensional linear space of harmonic
homogeneous polynomials of degree $k\geq0,$ which are orthonormal with respect
to the scalar product
\[
\left\langle f,g\right\rangle _{\mathbb{S}^{d-1}}:=\int_{\mathbb{S}^{d-1}%
}f\left(  \theta\right)  \overline{g\left(  \theta\right)  }d\theta,
\]
where $\mathbb{S}^{d-1}=\left\{  x\in\mathbb{R}^{d};\left\vert x\right\vert
=1\right\}  $ is the unit sphere (see \cite{ABR92}, \cite{Koun00},
\cite{Mull66}, \cite{StWe71}). Let $f$ be a continuous function on the annular
region $A\left(  r_{0},r_{1}\right)  .$ Then the \emph{Fourier-Laplace
coefficients} $f_{k,l}$ of $f$ are defined by
\begin{equation}
f_{k,l}\left(  r\right)  =\int_{\mathbb{S}^{d-1}}f\left(  r\theta\right)
\overline{Y_{k,l}\left(  \theta\right)  }d\theta\text{ for }r\in\left(
r_{0},r_{1}\right)  . \label{LFK}%
\end{equation}
The formal series
\begin{equation}
\sum_{k=0}^{\infty}\sum_{l=1}^{a_{k}}f_{k,l}\left(  r\right)  Y_{k,l}\left(
\theta\right)  \label{LF}%
\end{equation}
is called the \emph{Fourier-Laplace series} of $f.$ The special case of a
harmonic function $f$ defined on $A\left(  r_{0},r_{1}\right)  $ shall serve
us as a guiding example: We have
\begin{equation}
f_{k,l}\left(  r\right)  =\left\{
\begin{array}
[c]{ll}%
\alpha_{k}r^{k}+\beta_{k}r^{-k-d+2} & \text{for }d>2,\text{ or }d=2,k\geq1\\
\alpha_{0}+\beta_{0}\log r & \text{for }d=2,k=0.
\end{array}
\right.  \label{eqharmonic}%
\end{equation}
on the open interval $\left(  r_{0},r_{1}\right)  ,$ for suitable complex
coefficients $\alpha_{k}$ and $\beta_{k}.$ More generally, it is known that if
$f$ is polyharmonic of order $p$ and $d$ is odd then there exist polynomials
$p_{k,l}$ and $q_{k,l}$ of degree $p-1$ such that $f_{k,l}\left(  r\right)
=r^{k}p_{k,l}\left(  r^{2}\right)  +r^{-k-d+2}q_{k,l}\left(  r^{2}\right)  $
(see \cite{Sob}, \cite{Veku67} or \cite{Koun00}). Thus for odd dimension
$f_{k,l}\left(  r\right)  $ extends to an analytic function on the punctured
plane $\mathbb{C}^{\ast}:=\left\{  z\in\mathbb{C};z\neq0\right\}  ,$ while for
even dimension we can only infer that $f_{k,l}\left(  r\right)  $ is an
analytic function on the cutted complex plane $\mathbb{C}\setminus\left(
-\infty,0\right]  .$

The \textbf{first main result} of this paper states the following: The
Fourier-Laplace coefficients $f_{k,l}\left(  r\right)  $ of a polyharmonic
function $f:A\left(  r_{0},r_{1}\right)  \rightarrow\mathbb{C}$ of infinite
order and type $0$ possess analytic extensions to the cutted complex plane
$\mathbb{C}\setminus\left(  -\infty,0\right]  $ (cf. Theorem \ref{ThmMain}
below). For odd dimension we can sharpen the result: There exist \emph{entire
functions} $p_{k,l}$ and $q_{k,l}$ such that
\begin{equation}
f_{k,l}\left(  r\right)  =r^{k}p_{k,l}\left(  r^{2}\right)  +r^{-k-d+2}%
q_{k,l}\left(  r^{2}\right)  \label{ENEW}%
\end{equation}
for all $r\in\left(  r_{0},r_{1}\right)  .$ In particular, it follows that the
Fourier-Laplace coefficients $f_{k,l}$ defined on the interval $\left(
r_{0},r_{1}\right)  $ can be analytically extended to the punctured plane
$\mathbb{C}^{\ast}:=\left\{  z\in\mathbb{C};z\neq0\right\}  .$ We refer to
Theorem \ref{ThmCoeff} below.

The \textbf{second main} \textbf{result} of the paper addresses the problem of
extending analytically a polyharmonic function $f:A\left(  r_{0},r_{1}\right)
\rightarrow\mathbb{C}$ of infinite order and type $\tau\geq0$ to a suitable
domain in $\mathbb{C}^{d}.$ It is well known that polyharmonic functions of
infinite order and type $\tau=0$ defined on a domain $G$ in $\mathbb{R}^{d}$
can be extended analytically to the so-called kernel $\widetilde{G}$ of the
harmonicity hull $\widehat{G}$ (see \cite{ACL83}, \cite{Avan85}), or for a
generalization \cite{Eben}. In the case of the annular region $A\left(
r_{0},r_{1}\right)  $ we can give an explicit formula for the analytic
extension via Fourier-Laplace series and, as a by-product, we show that it
suffices to assume that the function $f$ is polyharmonic of infinite order and
type $\tau<1/2r_{1}$ instead of the stronger assumption of type $\tau=0.$ We
refer to Theorem \ref{ThmMM} and Theorem \ref{ThmMM2} below.

In order to make the results more precise we recall some basic notations in
complex analysis in several variables: For $z=\left(  z_{1},\ldots
,z_{d}\right)  \in\mathbb{C}^{d}$ define $\left\vert z\right\vert
_{\mathbb{C}^{d}}^{2}=\left\vert z_{1}\right\vert ^{2}+\cdots+\left\vert
z_{d}\right\vert ^{2}$ and $q\left(  z\right)  :=z_{1}^{2}+\cdots+z_{d}^{2}$.
The upper and lower \emph{Lie-norm} $L_{+}:\mathbb{C}^{d}\rightarrow\left[
0,\infty\right)  $ and $L_{-}:\mathbb{C}^{d}\rightarrow\left[  0,\infty
\right)  $ are defined by
\[
L_{\pm}\left(  z\right)  =\sqrt{\left\vert z\right\vert _{\mathbb{C}^{d}}%
^{2}\pm\sqrt{\left\vert z\right\vert _{\mathbb{C}^{d}}^{4}-\left\vert q\left(
z\right)  \right\vert ^{2}}}.
\]
The Lie-ball of radius $R\in\left(  0,\infty\right]  $ is defined by
$\widehat{B_{R}}:=\{z\in\mathbb{C}^{d};L_{+}\left(  z\right)  <R\}$ and it is
also called the classical domain of E. Cartan of the type IV, we refer to
\cite[p. 59]{ACL83}, \cite{Hua} or \cite{Mori98} for further details.

In the above terms our \emph{second main result} says that a polyharmonic
function $f:A\left(  r_{0},r_{1}\right)  \rightarrow\mathbb{C}$ of infinite
order and type $\tau<1/2r_{1}$ can be extended to an analytic function on the
domain
\[
\{z\in\mathbb{C}^{d};r_{0}<L_{-}\left(  z\right)  \leq L_{+}\left(  z\right)
<r_{1}\}\setminus q^{-1}\left(  \left(  -\infty,0\right]  \right)  .
\]
The proof depends on a Laurent type decomposition of the function $f$: for odd
dimension $d>1$ we show that there exists an analytic function $f_{1}$ defined
on $\left\{  z\in\mathbb{C}^{d};L_{+}\left(  z\right)  <r_{1}\right\}  $ and
an analytic function $f_{2}$ defined on $\{z\in\mathbb{C}^{d};r_{0}%
<L_{-}\left(  z\right)  \leq L_{+}\left(  z\right)  <1/2\tau\}$ such that the
function $F$ defined by
\[
F\left(  z\right)  =f_{1}\left(  z\right)  +\left(  z_{1}^{2}+\ldots+z_{d}%
^{2}\right)  ^{\left(  2-d\right)  /2}f_{2}\left(  z\right)  \text{ }%
\]
is an analytic extension of $f.$ A similar result is formulated in Section
\ref{S7} for even dimension.

The paper is organized as follows: In the Section $2$ we recall some basic
facts about the action of the Laplace operator $\Delta$ on Fourier-Laplace
series. For polyharmonic functions of infinite order we obtain estimates of
derivatives of the Fourier-Laplace coefficients with respect to certain linear
differential operators depending on the radius $r.$ The results in Section $3$
belong to the main technical merits of the paper: They are devoted to an
extensive discussion of the so-called fundamental function of a linear
differential operator with constant coefficients and the concept of a
generalized Taylor series with respect to the corresponding fundamental
functions, the climax being Theorem \ref{ThmTaylor2} below. These results are
crucial for the main goals of the paper and are also of independent interest.

Section 4 contains the first main result about the analytic extendibility of
the Fourier-Laplace coefficients for polyharmonic functions of infinite order
and type $\tau<1/2r_{0},$ and Section 5 discusses the special case of odd
dimension. In Section 6 we discuss the analytic extendibility as described
above for odd dimension, and in Section 7 the case of even dimension is
addressed. The paper concludes with an Appendix concerning estimates of
generalized derivatives of odd order by even orders in the framework of linear
differential operators with constant coefficients.

Throughout the paper it is assumed that $d\geq2.$ By $\omega_{d-1}$ we define
the surface area of $\mathbb{S}^{d-1}$ with respect to the rotation invariant
measure $d\theta.$

\section{Basic estimates and examples}

By $C^{m}\left(  G\right)  $ we denote the set of all functions
$f:G\rightarrow\mathbb{C}$ which are continuously differentiable up to the
order $m$. It is well known that the Fourier-Laplace series (\ref{LF})
converges absolutely and compactly in $A\left(  r_{0},r_{1}\right)  $ to $f$
if $f\in C^{m}\left(  A\left(  r_{0},r_{1}\right)  \right)  $ for $m>\frac
{1}{2}\left(  d-1\right)  $. We refer to \cite{Kalf95} for questions of
convergence of Fourier-Laplace series and the cited literature therein.

Let $f\in C^{\infty}\left(  A\left(  r_{0},r_{1}\right)  \right)  $ and let
$f_{k,l},k\in\mathbb{N}_{0},l=1,\ldots,a_{k},$ be the Fourier-Laplace
coefficients defined in (\ref{LFK}). Recall that $a_{k}$ is the dimension of
the space of all harmonic homogeneous polynomials of degree $k.$

For $x\in A\left(  r_{0},r_{1}\right)  $ we use spherical coordinates
$x=r\theta$ where $\theta=x/\left\vert x\right\vert $ and $r=\left\vert
x\right\vert .$ It is not difficult to establish the formula
\begin{equation}
\left(  \Delta^{p}f\right)  \left(  r\theta\right)  =\sum_{k=0}^{\infty}%
\sum_{l=1}^{a_{k}}L_{k}^{p}\left(  f_{k,l}\right)  \left(  r\right)  \cdot
Y_{k,l}\left(  \theta\right)  , \label{eqDeltap}%
\end{equation}
where the series converges absolutely and uniformly on compact subsets of
$A\left(  r_{0},r_{1}\right)  $ and $L_{k}^{p}$ is the $p$-th iterate of the
differential operator
\begin{equation}
L_{k}=\frac{d^{2}}{dr^{2}}+\frac{d-1}{r}\frac{d}{dr}-\frac{k\left(
k+d-2\right)  }{r^{2}} \label{deflk}%
\end{equation}
(see \cite{Koun00}). It follows that the $\left(  k,l\right)  $-th
Fourier-Laplace coefficient of $\Delta^{p}f$ is equal to $L_{k}^{p}%
f_{k,l}\left(  r\right)  ,$ i.e., that
\begin{equation}
L_{k}^{p}\left(  f_{k,l}\right)  \left(  r\right)  =\int_{\mathbb{S}^{d-1}%
}\left(  \Delta^{p}f\right)  \left(  r\theta\right)  \overline{Y_{k,l}\left(
\theta\right)  }d\theta\label{eqLLLp}%
\end{equation}
for any $r\in\left(  r_{0},r_{1}\right)  .$ Parseval's formula yields
\[
\int_{\mathbb{S}^{d-1}}\left\vert \Delta^{p}f\left(  r\theta\right)
\right\vert ^{2}d\theta=\sum_{k=0}^{\infty}\sum_{l=1}^{a_{k}}\left\vert
L_{k}^{p}f_{k,l}\left(  r\right)  \right\vert ^{2}<\infty
\]
for any $r$ with $r_{0}<r<r_{1}.$

\begin{theorem}
\label{ThmLkpest}Let $f\in A\left(  r_{0},r_{1}\right)  \rightarrow\mathbb{C}$
be polyharmonic of infinite order and type $\tau\geq0.$ Then for each
subinterval $\left[  a,b\right]  $ of $\left(  r_{0},r_{1}\right)  $ and for
all $\varepsilon>0$ there exists a positive number $C_{a,b,\varepsilon}$ such
that, for all $k\in\mathbb{N}_{0},$ $l=1,\ldots,a_{k},$ for all $r\in\left[
a,b\right]  $ and $p\in\mathbb{N}_{0},$
\[
\left\vert L_{k}^{p}\left(  f_{k,l}\right)  \left(  r\right)  \right\vert \leq
C_{a,b,\varepsilon}\sqrt{\omega_{d-1}}\left(  2p\right)  !\left(
\tau+\varepsilon\right)  ^{2p}.
\]

\end{theorem}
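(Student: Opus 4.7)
The plan is to bound the Fourier--Laplace coefficient of $\Delta^{p}f$ directly against the sup norm of $\Delta^{p}f$ on a suitable compact annular set, then invoke the defining estimate (\ref{eqpolyinf}) of ``infinite order and type $\tau$.'' The representation formula (\ref{eqLLLp}) is the bridge: it expresses the one-dimensional object $L_{k}^{p}f_{k,l}(r)$ as an integral against $\overline{Y_{k,l}}$ over the sphere, so we only need to control $\Delta^{p}f$ uniformly in the angular variable.

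First, I would fix $[a,b]\subset(r_{0},r_{1})$ and form the compact set
$K=\{x\in\mathbb{R}^{d}:a\leq |x|\leq b\}\subset A(r_{0},r_{1})$.
Given $\varepsilon>0$, the hypothesis yields a constant $C_{K,\varepsilon}>0$ with
$\max_{x\in K}|\Delta^{p}f(x)|\leq C_{K,\varepsilon}(2p)!(\tau+\varepsilon)^{2p}$
for every $p\in\mathbb{N}_{0}$. Setting $C_{a,b,\varepsilon}:=C_{K,\varepsilon}$ is the natural candidate for the constant in the conclusion.

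Next, applying the Cauchy--Schwarz inequality in (\ref{eqLLLp}) and using $\int_{\mathbb{S}^{d-1}}|Y_{k,l}(\theta)|^{2}d\theta=1$ (orthonormality) gives
\[
|L_{k}^{p}f_{k,l}(r)|\leq\Bigl(\int_{\mathbb{S}^{d-1}}|\Delta^{p}f(r\theta)|^{2}d\theta\Bigr)^{\!1/2}.
\]
For $r\in[a,b]$ the sphere $\{r\theta:\theta\in\mathbb{S}^{d-1}\}$ lies in $K$, so the integrand is pointwise bounded by $C_{K,\varepsilon}^{2}((2p)!(\tau+\varepsilon)^{2p})^{2}$, and the integral is at most $\omega_{d-1}$ times this bound. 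Taking square roots produces exactly the inequality
\[
|L_{k}^{p}f_{k,l}(r)|\leq C_{a,b,\varepsilon}\sqrt{\omega_{d-1}}\,(2p)!(\tau+\varepsilon)^{2p},
\]
uniformly in $k$, $l$, $p$, and $r\in[a,b]$.

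There is essentially no obstacle: the argument is a one-line Cauchy--Schwarz estimate combined with the defining inequality of polyharmonicity of infinite order, and the crucial identity (\ref{eqLLLp}) is already established in the paragraph preceding the statement. The only subtle point worth checking is that the same constant $C_{a,b,\varepsilon}$ works for all pairs $(k,l)$, but this is automatic since the bound on $\Delta^{p}f$ on $K$ does not depend on the Fourier--Laplace index.
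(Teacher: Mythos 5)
Your proof is correct and follows essentially the same route as the paper: both start from the identity (\ref{eqLLLp}), apply Cauchy--Schwarz together with the orthonormality of $Y_{k,l}$ to produce the factor $\sqrt{\omega_{d-1}}$, and then invoke the defining estimate (\ref{eqpolyinf}) on the compact annulus $K=\{a\leq|x|\leq b\}$. The only difference is the trivial one of whether Cauchy--Schwarz is applied to the full integrand or, as in the paper, to $\int_{\mathbb{S}^{d-1}}|Y_{k,l}(\theta)|\,d\theta$ after pulling out $\max_{K}|\Delta^{p}f|$; both yield the identical bound.
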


\begin{proof}
Let $\left[  a,b\right]  \subset\left(  r_{0},r_{1}\right)  $ and $K\left(
a,b\right)  :=\left\{  x\in\mathbb{R}^{d};a\leq\left\vert x\right\vert \leq
b\right\}  .$ Then (\ref{eqLLLp}) implies that
\begin{equation}
\left\vert L_{k}^{p}\left(  f_{k,l}\right)  \left(  r\right)  \right\vert
\leq\max_{x\in K\left(  a,b\right)  }\left\vert \Delta^{p}f\left(  x\right)
\right\vert \cdot\int_{\mathbb{S}^{d-1}}\left\vert Y_{k,l}\left(
\theta\right)  \right\vert d\theta\label{eqlast}%
\end{equation}
for all $r\in\left[  a,b\right]  .$ The integral on the right-hand side in
(\ref{eqlast}) can be estimated by the Cauchy Schwarz inequality
\[
\int_{\mathbb{S}^{d-1}}\left\vert Y_{k,l}\left(  \theta\right)  \right\vert
d\theta\leq\sqrt{\int_{\mathbb{S}^{d-1}}1d\theta}\sqrt{\int_{\mathbb{S}^{d-1}%
}\left\vert Y_{k,l}\left(  \theta\right)  \right\vert ^{2}d\theta}%
=\sqrt{\omega_{d-1}}.
\]
Now the result follows from the definition of a polyharmonic function of
infinite order and type $\tau\geq0$ given in (\ref{eqpolyinf}).
\end{proof}

The rest of this Section is devoted to an instructive example: For a real
number $\alpha$ and a harmonic homogeneous polynomial $Y_{k}\left(  x\right)
$ of degree $k\in\mathbb{N}_{0},$ define the function
\[
H_{\alpha,k}\left(  x\right)  =\left(  x_{1}^{2}+\ldots+x_{d}^{2}\right)
^{\alpha}\cdot Y_{k}\left(  x\right)  =\left\vert x\right\vert ^{2\alpha}\cdot
Y_{k}\left(  x\right)  .
\]
Obviously $H_{\alpha,k}$ can be defined on the annular region $\mathbb{R}%
^{d}\setminus\left\{  0\right\}  $. In the next result we restrict
$H_{\alpha,k}$ to the annular region $A\left(  r_{0},r_{1}\right)  $ with
$r_{0}>0$ and we shall show that $H_{\alpha,k}$ is polyharmonic of infinite
order and type at most $1/r_{0}.$

\begin{theorem}
For $\alpha\in\mathbb{N}_{0}$ or $\alpha=1-\frac{1}{2}d-k+j$ with
$j\in\mathbb{N}_{0}$ the function $H_{\alpha,k}$ is polyharmonic of finite
order. If $\alpha\in\mathbb{R}$ is different from these numbers, then
$H_{\alpha,k},$ as a function on the annular region $A\left(  r_{0}%
,r_{1}\right)  $ with $r_{0}>0,$ is polyharmonic of infinite order and type at
most $1/r_{0}.$
\end{theorem}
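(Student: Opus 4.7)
The plan is to exploit the fact that $H_{\alpha,k}$ is a single Fourier--Laplace mode. Writing $x=r\theta$, the homogeneity of $Y_{k}$ gives $H_{\alpha,k}(r\theta)=r^{2\alpha+k}Y_{k}(\theta)$, so formula (\ref{eqDeltap}) reduces the entire problem to understanding $L_{k}^{p}(r^{2\alpha+k})$. A routine substitution in (\ref{deflk}) shows that $L_{k}$ sends a monomial $r^{2\beta+k}$ to a scalar multiple of $r^{2(\beta-1)+k}$; iterating yields
\[
L_{k}^{p}(r^{2\alpha+k})=4^{p}\prod_{j=0}^{p-1}(\alpha-j)\bigl(\alpha-j+k+\tfrac{d}{2}-1\bigr)\cdot r^{2(\alpha-p)+k}.
\]

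The finite-order cases drop out of this product immediately. If $\alpha\in\mathbb{N}_{0}$, the factor $(\alpha-j)$ vanishes at $j=\alpha$, so the product is zero for $p>\alpha$. If $\alpha=1-\frac{d}{2}-k+j_{0}$ with $j_{0}\in\mathbb{N}_{0}$, the factor $\alpha-j+k+\frac{d}{2}-1=j_{0}-j$ vanishes at $j=j_{0}$, so the product is zero for $p>j_{0}$. In either case $\Delta^{p}H_{\alpha,k}\equiv 0$ for $p$ large, so $H_{\alpha,k}$ is polyharmonic of finite order.

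For $\alpha$ outside these exceptional sets, no factor of the product vanishes and it remains to estimate the growth in $p$. Using $|\alpha-j|\leq|\alpha|+j$ and the analogous bound for the second factor, the product is bounded above by a ratio of rising factorials of the form $\Gamma(|\alpha|+p)\Gamma(|\beta|+p)/(\Gamma(|\alpha|)\Gamma(|\beta|))$ with $\beta=\alpha+k+d/2-1$. Combined with the Stirling comparison $(p!)^{2}\leq C\sqrt{p}\,4^{-p}(2p)!$, this yields constants $M>0$ and $B\in\mathbb{R}$ depending only on $\alpha,k,d$ with
\[
\left|L_{k}^{p}(r^{2\alpha+k})\right|\leq M\,(2p)!\,p^{B}\,r^{2(\alpha-p)+k}.
\]
Given any compact $K\subset A(r_{0},r_{1})$, set $a:=\min_{x\in K}|x|>r_{0}$; for $p$ large one has $r^{2\alpha-2p}\leq a^{2\alpha-2p}$ on $K$, and the polynomial factor $p^{B}$ is absorbed into $(1+\delta)^{2p}$ for any prescribed $\delta>0$. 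Hence $\max_{x\in K}|\Delta^{p}H_{\alpha,k}(x)|\leq C_{K,\varepsilon}(2p)!(1/a+\varepsilon)^{2p}$ for every $\varepsilon>0$, and since $1/a<1/r_{0}$, this is precisely the type-$1/r_{0}$ estimate in (\ref{eqpolyinf}).

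The only delicate point is the cancellation of the prefactor $4^{p}$ in the iterated formula against the factor $4^{-p}$ arising from $(p!)^{2}\approx(2p)!/4^{p}$: this is exactly what brings the type down to $1/r_{0}$ rather than a larger constant. Everything else is routine bookkeeping.
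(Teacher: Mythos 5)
Your proposal is correct and follows essentially the same route as the paper: both reduce to the explicit product formula $\Delta^{p}H_{\alpha,k}=c_{\alpha,p}\left\vert x\right\vert ^{2\alpha-2p}Y_{k}$ (your $4^{p}\prod_{j}(\alpha-j)(\alpha-j+k+\tfrac{d}{2}-1)$ is exactly the paper's $c_{\alpha,p}$), read off the exceptional values of $\alpha$ from the vanishing factors, and then bound $\left\vert x\right\vert ^{-2p}$ by $r_{0}^{-2p}$ on compact sets. The only cosmetic difference is that the paper controls $\overline{\lim}_{p}\sqrt[2p]{\left\vert c_{\alpha,p}\right\vert /(2p)!}=1$ via the ratio test for the auxiliary power series $\sum c_{\alpha,p}z^{p}/(2p)!$, whereas you obtain the same growth rate by a Gamma-function/Stirling estimate; both are valid.
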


\begin{proof}
A straightforward calculation provides the formula
\[
\Delta\left(  \left\vert x\right\vert ^{2\alpha}Y_{k}\left(  x\right)
\right)  =2\alpha\left(  2\alpha+d-2+2k\right)  \cdot\left\vert x\right\vert
^{2\alpha-2}Y_{k}\left(  x\right)  .
\]
Hence $\Delta^{p}\left(  \left\vert x\right\vert ^{2\alpha}Y_{k}\left(
x\right)  \right)  =c_{\alpha,p}\left\vert x\right\vert ^{2\alpha-2p}%
Y_{k}\left(  x\right)  $ where
\begin{align*}
c_{\alpha,p}  &  =2\alpha\left(  2\alpha-2\right)  \cdots\left(
2\alpha-2\left(  p-1\right)  \right)  \cdot\\
&  \cdot\left(  2\alpha+d-2+2k\right)  \cdots\left(  2\alpha+d-2+2k-2\left(
p-1\right)  \right)  .
\end{align*}
Thus $\Delta^{p}\left(  \left\vert x\right\vert ^{2\alpha}Y_{k}\left(
x\right)  \right)  =0$ if and only if $2\alpha-2j=0$ or $2\alpha+d-2+2k-2j=0$
for some $j=0,\ldots,p-1.$ This means that $\alpha=j$ for some $j\in\left\{
0,\ldots,p-1\right\}  $ or $\alpha=1-\frac{1}{2}d-k+j$ for some $j\in\left\{
0,\ldots,p\right\}  .$ Hence the first statement is proven.

Next consider the power series $f\left(  z\right)  =\sum_{p=1}^{\infty
}c_{\alpha,p}z^{p}/\left(  2p\right)  !$ in the complex variable $z.$ Assume
that $\alpha\neq j$ and $\alpha\neq1-\frac{1}{2}d-k+j$ for all natural numbers
$j\in\mathbb{N}_{0}.$ Then the convergence radius $R$ can be computed by the
ratio test
\[
R=\lim_{p\rightarrow\infty}\frac{c_{\alpha,p}/\left(  2p\right)  !}%
{c_{\alpha,p+1}/\left(  2p+2\right)  !}=\lim_{p\rightarrow\infty}\frac{\left(
2p+1\right)  \left(  2p+2\right)  }{\left(  2\alpha-2p\right)  \left(
2\alpha+d-2+2k-2p\right)  }=1.
\]
The convergence radius formula yields $R=\overline{\lim}_{p\rightarrow\infty
}\sqrt[p]{\left\vert c_{\alpha,p}\right\vert /\left(  2p\right)  !}=1.$
Moreover,
\begin{equation}
\sqrt[2p]{\frac{\left\vert \Delta^{p}\left(  \left\vert x\right\vert
^{2\alpha}Y_{k}\left(  x\right)  \right)  \right\vert }{\left(  2p\right)  !}%
}=\sqrt[2p]{\left\vert \frac{c_{\alpha,p}}{\left(  2p\right)  !}\right\vert
}\sqrt[2p]{\left\vert Y_{k}\left(  x\right)  \right\vert }\sqrt[2p]{\left\vert
x\right\vert ^{2\alpha}}\cdot\frac{1}{\left\vert x\right\vert }.
\label{eqpolyalpha}%
\end{equation}
Let now $K\subset A\left(  r_{0},r_{1}\right)  $ be a compact subset. Since
$Y_{k}$ is continuous it is bounded on $K,$ say by $M_{k}.$ Clearly
$\left\vert x\right\vert >r_{0}$ for $x\in K.$ Thus we can estimate
\[
\sqrt[2p]{\left\vert Y_{k}\left(  x\right)  \right\vert }\sqrt[2p]{\left\vert
x\right\vert ^{2\alpha}}\cdot\frac{1}{\left\vert x\right\vert }\leq
\sqrt[2p]{M_{k}}\sqrt[2p]{r_{1}^{2\alpha}}\cdot\frac{1}{r_{0}}%
\]
for all $x\in K$. Using this estimate in (\ref{eqpolyalpha}) and taking the
limit $p\rightarrow\infty,$ we see that $H_{\alpha,k}$ is polyharmonic of
infinite order and type at most $1/r_{0}.$
\end{proof}

\section{Linear differential operators with constant coefficients}

In this section we shall review some results about linear differential
operators with constant coefficients (see e.g.\ \cite{Godu97}). Mainly we
shall study Taylor-type expansion of a \ $C^{\infty}$-function with respect to
a linear differential operator with constant coefficients. Some material can
be found in \cite{Schu81} but we shall need a deeper analysis of this topic.
We shall give a self-contained presentation in order to facilitate the
readability of the paper and to fix notations.

Let $\lambda_{0},\ldots,\lambda_{n}$ be complex numbers, and define the linear
differential operator with constant coefficients $L$ by
\begin{equation}
L:=L_{\lambda_{0},\ldots,\lambda_{n}}:=\left(  \frac{d}{dx}-\lambda
_{0}\right)  \ldots.\left(  \frac{d}{dx}-\lambda_{n}\right)  . \label{eqdefL}%
\end{equation}
The space of all solutions of $Lu=0$ is denoted by
\begin{equation}
E_{\left(  \lambda_{0},\ldots,\lambda_{n}\right)  }:=\left\{  f\in C^{\infty
}\left(  \mathbb{R}\right)  ;Lf=0\right\}  . \label{Espace}%
\end{equation}
Elements in $E_{\left(  \lambda_{0},\ldots,\lambda_{n}\right)  }$ are called
\emph{exponential polynomials} or sometimes $L$\emph{-polynomials, }and
$\lambda_{0},\ldots,\lambda_{n}$ are called \emph{exponents} or
\emph{frequencies} (see e.g.\ Chapter 3 in \cite{BeGa95}).

In the case of \emph{pairwise different} $\lambda_{j},j=0,\ldots,n,$ the space
$E_{\left(  \lambda_{0},\ldots,\lambda_{n}\right)  }$ is the linear span
generated by the functions $e^{\lambda_{0}x},e^{\lambda_{1}x},\ldots
,e^{\lambda_{n}x}.$ In the case when $\lambda_{j}$ occurs $m_{j}$ times in
$\Lambda_{n}=\left(  \lambda_{0},\ldots,\lambda_{n}\right)  ,$ a basis of the
space $E_{\left(  \lambda_{0},\ldots,\lambda_{n}\right)  }$ is given by the
linearly independent functions
\begin{equation}
x^{s}e^{\lambda_{j}x}\qquad\text{for }s=0,1,\ldots,m_{j}-1. \label{eqexpof}%
\end{equation}
In the case that $\lambda_{0}=\cdots=\lambda_{n}$ $=0,$ the space $E_{\left(
\lambda_{0},\ldots,\lambda_{n}\right)  }$ is just the space of all polynomials
of degree at most $n,$ and we shall refer to this as the \emph{polynomial
case}.

\subsection{The fundamental function}

It is well known that for $\Lambda_{n}=\left(  \lambda_{0},\ldots,\lambda
_{n}\right)  \in\mathbb{C}^{n+1}$ there exists a unique solution
$\Phi_{\Lambda_{n}}\in E_{\left(  \lambda_{0},\ldots,\lambda_{n}\right)  }$ to
the Cauchy problem
\[
\Phi_{\Lambda_{n}}\left(  0\right)  =\ldots=\Phi_{\Lambda_{n}}^{\left(
n-1\right)  }\left(  0\right)  =0\text{ and }\Phi_{\Lambda_{n}}^{\left(
n\right)  }\left(  0\right)  =1.
\]
We shall call $\Phi_{\Lambda_{n}}$ the \emph{fundamental function} in
$E_{\left(  \lambda_{0},\ldots,\lambda_{n}\right)  }$ (see e.g.\ \cite{Micc76}%
). An explicit formula for $\Phi_{\Lambda_{n}}$ is
\begin{equation}
\Phi_{\Lambda_{n}}\left(  x\right)  :=\frac{1}{2\pi i}\int_{\Gamma_{r}}%
\frac{e^{xz}}{\left(  z-\lambda_{0}\right)  \cdots\left(  z-\lambda
_{n}\right)  }dz, \label{defPhi}%
\end{equation}
where $\Gamma_{r}$ is the path in the complex plane defined by $\Gamma
_{r}\left(  t\right)  =re^{it}$, $t\in\left[  0,2\pi\right]  $, surrounding
all the complex numbers $\lambda_{0},\ldots,\lambda_{n}$ (see Proposition
\ref{PropTaylor} below). Note that (\ref{defPhi}) implies the useful formula
\begin{equation}
\left(  \frac{d}{dx}-\lambda_{n+1}\right)  \Phi_{\left(  \lambda_{0}%
,\ldots,\lambda_{n+1}\right)  }\left(  x\right)  =\Phi_{\left(  \lambda
_{0},\ldots,\lambda_{n}\right)  }\left(  x\right)  . \label{rec0}%
\end{equation}
The fundamental function can be seen as analogue of the power function $x^{n}$
in the space $E_{\left(  \lambda_{0},\ldots,\lambda_{n}\right)  }$: In the
polynomial case, i.e., $\lambda_{0}=\ldots=\lambda_{n}=0,$ the fundamental
function is
\begin{equation}
\Phi_{\text{pol,}n}\left(  x\right)  =\frac{1}{n!}x^{n}. \label{Fundpol}%
\end{equation}
In general, explicit formulae for the fundamental function are complicated.
However, in the case of equidistant exponents one can compute the fundamental
function in a very simple way (see \cite{Li85}):

\begin{example}
\label{Exequi}Assume that $\lambda_{k}=\alpha+\omega k$ for $k=0,\ldots,n,$
and complex numbers $\omega\neq0$ and $\alpha.$ Then
\begin{equation}
\Phi_{\text{\textrm{equi,}}n}\left(  x\right)  =\frac{1}{n!}\frac{1}%
{\omega^{n}}e^{\alpha x}\left(  e^{\omega x}-1\right)  ^{n}=\frac{1}{n!}%
\frac{1}{\omega^{n}}\sum_{k=0}^{n}\binom{n}{k}e^{\left(  \alpha+k\omega
\right)  x}\left(  -1\right)  ^{n-k}.\label{eq12b}%
\end{equation}
Indeed, it is easy to see that $\Phi_{\text{\textrm{equi,}}n}\left(  0\right)
=\cdots=\Phi_{\text{\textrm{equi,}}n}^{\left(  n-1\right)  }\left(  0\right)
=0$ and $\Phi_{\text{\textrm{equi,}}n}^{\left(  n\right)  }\left(  0\right)
=1,$ and the right-hand side of (\ref{eq12b}) shows that $\Phi
_{\text{\textrm{equi,}}n}\in E_{\left(  \lambda_{0},\ldots,\lambda_{n}\right)
}$.
\end{example}

In the following we shall give estimates of the fundamental function which
seem to be new. Our estimates are based on the Taylor expansion of the
fundamental function $\Phi_{\Lambda_{n}}$ which will be described as follows:

\begin{proposition}
\label{PropTaylor}The function $\Phi_{\left(  \lambda_{0},\ldots,\lambda
_{n}\right)  }$ defined in (\ref{defPhi}) satisfies $\Phi_{\left(  \lambda
_{0},\ldots,\lambda_{n}\right)  }^{\left(  k\right)  }\left(  0\right)  =0$
for $k=0,\ldots,n-1$. For $k\geq n$ the formula
\begin{equation}
\Phi_{\left(  \lambda_{0},\ldots,\lambda_{n}\right)  }^{\left(  k\right)
}\left(  0\right)  =\sum_{\substack{\left(  s_{0},\ldots,s_{n}\right)
\in\mathbb{N}_{0}^{n+1}\text{ }\\s_{0}+\cdots+s_{n}+n=k,}}\lambda_{0}^{s_{0}%
}\cdots\lambda_{n}^{s_{n}} \label{eqTaylorcoeffphi}%
\end{equation}
holds. In particular, $\Phi_{\left(  \lambda_{0},\ldots,\lambda_{n}\right)
}^{\left(  n\right)  }\left(  0\right)  =1$ and $\Phi_{\left(  \lambda
_{0},\ldots,\lambda_{n}\right)  }^{\left(  n+1\right)  }\left(  0\right)
=\lambda_{0}+\cdots+\lambda_{n}.$
\end{proposition}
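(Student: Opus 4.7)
The plan is to compute $\Phi_{\Lambda_n}^{(k)}(0)$ directly from the contour integral representation (\ref{defPhi}) by differentiating under the integral sign and then expanding the integrand in a Laurent series.

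First I would choose a radius $r > \max_j |\lambda_j|$ so that $\Gamma_r$ encloses all the poles $\lambda_0,\ldots,\lambda_n$; the integral (\ref{defPhi}) is independent of such $r$ by Cauchy's theorem. Differentiating $k$ times under the integral (which is legal because the integrand is jointly smooth in $x$ and $z$) yields
\[
\Phi_{\Lambda_n}^{(k)}(0) = \frac{1}{2\pi i}\int_{\Gamma_r} \frac{z^k}{(z-\lambda_0)\cdots(z-\lambda_n)}\,dz.
\]
For $0\leq k\leq n-1$ the integrand is $O(|z|^{-2})$ as $|z|\to\infty$, so letting $r\to\infty$ forces the integral to vanish; this gives the first claim.

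For $k\geq n$, on the circle $|z|=r$ with $r>\max_j|\lambda_j|$ each factor admits the geometric expansion
\[
\frac{1}{z-\lambda_j} = \sum_{s_j=0}^{\infty}\frac{\lambda_j^{s_j}}{z^{s_j+1}},
\]
with the product expansion
\[
\frac{z^k}{(z-\lambda_0)\cdots(z-\lambda_n)} = \sum_{(s_0,\ldots,s_n)\in\mathbb{N}_0^{n+1}}\frac{\lambda_0^{s_0}\cdots\lambda_n^{s_n}}{z^{s_0+\cdots+s_n+n+1-k}},
\]
which converges absolutely and uniformly on $\Gamma_r$. I can therefore integrate term by term. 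Using the standard identity $\frac{1}{2\pi i}\int_{\Gamma_r} z^{-m}\,dz=\delta_{m,1}$, only the multi-indices with $s_0+\cdots+s_n+n+1-k = 1$, i.e., $s_0+\cdots+s_n=k-n$, survive. This is precisely the formula (\ref{eqTaylorcoeffphi}). The two stated specializations follow: for $k=n$ the only solution is $(0,\ldots,0)$, giving $1$; for $k=n+1$ the solutions are the standard basis vectors, giving $\lambda_0+\cdots+\lambda_n$.

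The computation is essentially routine; the only subtlety is verifying the interchange of sum and integral, which is justified by the absolute and uniform convergence of the geometric expansion on the compact contour $\Gamma_r$ once $r$ is chosen strictly larger than $\max_j|\lambda_j|$. One could alternatively avoid the series expansion by using residues at $z=\infty$, but the Laurent approach has the advantage of producing the symmetric combinatorial form in (\ref{eqTaylorcoeffphi}) directly, without a separate case analysis according to the multiplicities among the $\lambda_j$.
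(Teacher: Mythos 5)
Your proof is correct and follows essentially the same route as the paper: both start from the contour integral representation (\ref{defPhi}), expand each factor $1/(z-\lambda_j)$ as a geometric series on a circle of radius $r>\max_j|\lambda_j|$, and pick out the residue term $s_0+\cdots+s_n+n=k$. The only cosmetic difference is that you handle $k\leq n-1$ by a decay-at-infinity argument, whereas the paper obtains it from the same series computation (the index set is empty); this changes nothing of substance.
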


\begin{proof}
For $z\in\mathbb{C}$ with $\left\vert z\right\vert >\left\vert \lambda
_{j}\right\vert ,$ the geometric series
\[
\frac{1}{z-\lambda_{j}}=\frac{1}{z}\cdot\frac{1}{1-\lambda_{j}/z}=\sum
_{s=0}^{\infty}\lambda_{j}^{s}\left(  \frac{1}{z}\right)  ^{s+1}%
\]
converges. Thus we obtain from (\ref{defPhi}) that
\[
\Phi_{\left(  \lambda_{0},\ldots,\lambda_{n}\right)  }\left(  x\right)
=\sum_{s_{0}=0}^{\infty}\cdots\sum_{s_{n}=0}^{\infty}\frac{1}{2\pi i}%
\int_{\Gamma_{r}}\lambda_{0}^{s_{0}}\ldots\lambda_{n}^{s_{n}}\frac{e^{xz}%
}{z^{s_{0}+\cdots+s_{n}+n+1}}dz.
\]
By differentiating one obtains
\[
\Phi_{\left(  \lambda_{0},\ldots,\lambda_{n}\right)  }^{\left(  k\right)
}\left(  x\right)  =\sum_{s_{0}=0}^{\infty}\cdots\sum_{s_{n}=0}^{\infty}%
\frac{1}{2\pi i}\int_{\Gamma_{r}}\lambda_{0}^{s_{0}}\cdots\lambda_{n}^{s_{n}%
}\frac{z^{k}e^{xz}}{z^{s_{0}+\cdots+s_{n}+n+1}}dz.
\]
For $x=0$ the integral is easy to evaluate and the result is proven.
\end{proof}

In the following proposition we give the first estimate for the fundamental function:

\begin{proposition}
\label{PropPhi}Let $\lambda_{0},\ldots,\lambda_{n}$ be complex numbers and
$M_{n}:=\max\left\{  \left\vert \lambda_{j}\right\vert ;j=0,\ldots,n\right\}
.$ Then the inequality
\begin{equation}
\left\vert \Phi_{\left(  \lambda_{0},\ldots,\lambda_{n}\right)  }\left(
z\right)  \right\vert \leq\Phi_{\left(  \left\vert \lambda_{0}\right\vert
,\ldots,\left\vert \lambda_{n}\right\vert \right)  }\left(  \left\vert
z\right\vert \right)  \leq\frac{1}{n!}\left\vert z\right\vert ^{n}%
e^{M_{n}\left\vert z\right\vert }\text{ } \label{eqphiin}%
\end{equation}
holds for all $z\in\mathbb{C}.$
\end{proposition}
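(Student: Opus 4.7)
The plan is to derive both inequalities directly from the Taylor series representation of $\Phi_{\Lambda_{n}}$ established in Proposition \ref{PropTaylor}. Since $\Phi_{\Lambda_{n}}$ is entire (formula (\ref{defPhi}) gives it as a Cauchy-type integral of an entire function in $x$), we may write
\[
\Phi_{\left(\lambda_{0},\ldots,\lambda_{n}\right)}\left(z\right)=\sum_{k=n}^{\infty}\frac{z^{k}}{k!}\sum_{\substack{\left(s_{0},\ldots,s_{n}\right)\in\mathbb{N}_{0}^{n+1}\\s_{0}+\cdots+s_{n}+n=k}}\lambda_{0}^{s_{0}}\cdots\lambda_{n}^{s_{n}}.
\]

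For the first inequality, I would apply the triangle inequality termwise: since $|\lambda_{0}^{s_{0}}\cdots\lambda_{n}^{s_{n}}|=|\lambda_{0}|^{s_{0}}\cdots|\lambda_{n}|^{s_{n}}$, the absolute value $|\Phi_{\left(\lambda_{0},\ldots,\lambda_{n}\right)}(z)|$ is bounded by the same series evaluated at $|z|$ with $\lambda_{j}$ replaced by $|\lambda_{j}|$, which is exactly $\Phi_{\left(|\lambda_{0}|,\ldots,|\lambda_{n}|\right)}(|z|)$ by Proposition \ref{PropTaylor} (applied to nonnegative exponents, so all Taylor coefficients are nonnegative and agree with their absolute values).

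For the second inequality, I would crudely bound each monomial $|\lambda_{0}|^{s_{0}}\cdots|\lambda_{n}|^{s_{n}}\leq M_{n}^{s_{0}+\cdots+s_{n}}=M_{n}^{k-n}$. The number of tuples $(s_{0},\ldots,s_{n})\in\mathbb{N}_{0}^{n+1}$ with $s_{0}+\cdots+s_{n}=k-n$ is $\binom{k}{n}$ by a standard stars-and-bars count. Thus
\[
\Phi_{\left(|\lambda_{0}|,\ldots,|\lambda_{n}|\right)}(|z|)\leq\sum_{k=n}^{\infty}\frac{|z|^{k}}{k!}\binom{k}{n}M_{n}^{k-n}=\frac{|z|^{n}}{n!}\sum_{j=0}^{\infty}\frac{(M_{n}|z|)^{j}}{j!}=\frac{|z|^{n}}{n!}e^{M_{n}|z|},
\]
after reindexing with $j=k-n$ and using $\binom{k}{n}/k!=1/(n!(k-n)!)$.

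There is no real obstacle here; the only care needed is in the bookkeeping of the combinatorial count of the index set in (\ref{eqTaylorcoeffphi}) and verifying that for nonnegative exponents $|\lambda_{j}|$ the Cauchy formula (\ref{defPhi}) actually yields the same function as the nonnegative power series above, which is immediate since both sides are the unique element of $E_{(|\lambda_{0}|,\ldots,|\lambda_{n}|)}$ satisfying the Cauchy data at $0$ prescribed by Proposition \ref{PropTaylor}.
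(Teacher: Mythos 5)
Your proof is correct and follows essentially the same route as the paper: both estimate the Taylor coefficients from formula (\ref{eqTaylorcoeffphi}) by the triangle inequality to get the comparison with $\Phi_{\left(\left\vert \lambda_{0}\right\vert ,\ldots,\left\vert \lambda_{n}\right\vert \right)}\left(\left\vert z\right\vert\right)$, and then bound each monomial by $M_{n}^{k-n}$ with the stars-and-bars count $\binom{k}{n}$ of the index set to sum the series to $\frac{1}{n!}\left\vert z\right\vert^{n}e^{M_{n}\left\vert z\right\vert}$. The bookkeeping in your reindexing is accurate, so nothing further is needed.
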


\begin{proof}
Using (\ref{eqTaylorcoeffphi}) we can estimate the Taylor coefficient
\[
\left\vert \Phi_{\left(  \lambda_{0},\ldots,\lambda_{n}\right)  }^{\left(
k\right)  }\left(  0\right)  \right\vert \leq\sum_{s_{0}+\cdots+s_{n}%
+n=k}\left\vert \lambda_{0}^{s_{0}}\cdots\lambda_{n}^{s_{n}}\right\vert
=\sum_{s_{0}+\cdots+s_{n}+n=k}^{\infty}\left\vert \lambda_{0}\right\vert
^{s_{0}}\cdots\left\vert \lambda_{n}\right\vert ^{s_{n}}%
\]
which is obviously equal to $\Phi_{\left(  \left\vert \lambda_{0}\right\vert
,\ldots,\left\vert \lambda_{n}\right\vert \right)  }^{\left(  k\right)
}\left(  0\right)  .$ Since $\Phi_{\Lambda_{n}}\left(  z\right)  =\sum
_{k=n}^{\infty}\Phi_{\left(  \lambda_{0},\ldots,\lambda_{n}\right)  }^{\left(
k\right)  }\left(  0\right)  z^{k}/k!$ we can estimate
\[
\left\vert \Phi_{\left(  \lambda_{0},\ldots,\lambda_{n}\right)  }\left(
z\right)  \right\vert \leq\sum_{k=n}^{\infty}\frac{1}{k!}\left\vert
\Phi_{\left(  \lambda_{0},\ldots,\lambda_{n}\right)  }^{\left(  k\right)
}\left(  0\right)  \right\vert \cdot\left\vert z\right\vert ^{k}=\Phi_{\left(
\left\vert \lambda_{0}\right\vert ,\ldots,\left\vert \lambda_{n}\right\vert
\right)  }\left(  \left\vert z\right\vert \right)  .
\]
Since $\left\vert \lambda_{j}\right\vert \leq M_{n}$ for all $j=0,\ldots,n$,
we can estimate $\left\vert \lambda_{0}\right\vert ^{s_{0}}\cdots\left\vert
\lambda_{n}\right\vert ^{s_{n}}\leq M_{n}^{s_{0}+\cdots+s_{n}}.$ Using
(\ref{eqTaylorcoeffphi}) for $\left\vert \lambda_{0}\right\vert ,\ldots
,\left\vert \lambda_{n}\right\vert $ we obtain for $k\geq n$
\[
\Phi_{\left(  \left\vert \lambda_{0}\right\vert ,\ldots,\left\vert \lambda
_{n}\right\vert \right)  }^{\left(  k\right)  }\left(  0\right)  =\sum
_{s_{0}+\cdots+s_{n}=k-n}\left\vert \lambda_{0}\right\vert ^{s_{0}}%
\cdots\left\vert \lambda_{n}\right\vert ^{s_{n}}\leq\binom{k-n+n}{k-n}%
M_{n}^{k-n}.
\]
We conclude that
\begin{align*}
\Phi_{\left(  \left\vert \lambda_{0}\right\vert ,\ldots,\left\vert \lambda
_{n}\right\vert \right)  }\left(  \left\vert z\right\vert \right)   &
\leq\sum_{k=n}^{\infty}\frac{\left\vert z\right\vert ^{k}}{k!}\binom{k}%
{k-n}M_{n}^{k-n}=\sum_{k=0}^{\infty}\frac{\left\vert z\right\vert ^{k+n}%
}{\left(  k+n\right)  !}\binom{k+n}{k}M_{n}^{k}\\
&  =\frac{1}{n!}\left\vert z\right\vert ^{n}\sum_{k=0}^{\infty}\frac{1}%
{k!}\left\vert M_{n}z\right\vert ^{k}=\frac{1}{n!}\left\vert z\right\vert
^{n}e^{M_{n}\left\vert z\right\vert }.
\end{align*}
The proof is accomplished.
\end{proof}

Suppose now that $\lambda_{0},\lambda_{1},\ldots,$ is a \emph{bounded}
sequence of complex numbers. Then (\ref{eqphiin}) implies that
\begin{equation}
\overline{\lim_{n\rightarrow\infty}}\sqrt[n]{n!\left\vert \Phi_{\Lambda_{n}%
}\left(  z\right)  \right\vert }\leq\left\vert z\right\vert .
\label{eqboundedlambda}%
\end{equation}
In the special case that the exponents $\lambda_{n}$ are equal to $0,$ a
stronger conclusion is valid. Namely, by using the explicit formula
(\ref{Fundpol}), we know that the limit in (\ref{eqboundedlambda}) exists and
\[
\lim_{n\rightarrow\infty}\sqrt[n]{n!\left\vert \Phi_{\text{pol,}n}\left(
z\right)  \right\vert }=\lim_{n\rightarrow\infty}\sqrt[n]{\left\vert
z^{n}\right\vert }=\left\vert z\right\vert .
\]
Next suppose that the estimate $\left\vert \lambda_{n}\right\vert \leq\beta n$
holds for all natural numbers and some $\beta>0.$ Then (\ref{eqphiin}) yields
the estimate
\begin{equation}
\overline{\lim_{n\rightarrow\infty}}\sqrt[n]{n!\left\vert \Phi_{\Lambda_{n}%
}\left(  z\right)  \right\vert }\leq\left\vert z\right\vert e^{\beta\left\vert
z\right\vert }. \label{eqlimsu}%
\end{equation}
The estimate (\ref{eqlimsu}) seems to be satisfactory. However, the example of
the equidistant points $\lambda_{n}=n\omega$ shows that this is not the
optimal estimate. Namely, by using (\ref{eq12b}) we infer that
\[
\lim_{n\rightarrow\infty}\sqrt[n]{n!\left\vert \Phi_{\text{\textrm{equi,}}%
n}\left(  z\right)  \right\vert }=\frac{1}{\left\vert \omega\right\vert
}\left\vert e^{\omega z}-1\right\vert \leq\left\vert z\right\vert
e^{\left\vert \omega\right\vert \left\vert z\right\vert }.
\]
Next we shall provide a similar estimate for general exponents $\lambda_{n}$
obeying an estimate of the form $\left\vert \lambda_{n}\right\vert \leq\beta
n.$

\begin{proposition}
\label{Propmonot}Let $\lambda_{j}$ and $\mu_{j}$ be real numbers satisfying
$0\leq\lambda_{j}\leq\mu_{j}$ for $j=0,\ldots,n.$ Then $\Phi_{\Lambda_{n}%
}\left(  x\right)  $ is real for all $x\in\mathbb{R}$ and $\Phi_{\Lambda_{n}%
}\left(  x\right)  >0$ for all $x>0.$ Moreover
\[
\left\vert \Phi_{\left(  \lambda_{0},\ldots,\lambda_{n}\right)  }\left(
z\right)  \right\vert \leq\Phi_{\left(  \mu_{0},\ldots,\mu_{n}\right)
}\left(  \left\vert z\right\vert \right)
\]
for all complex numbers $z.$
\end{proposition}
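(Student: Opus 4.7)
The plan is to reduce everything to the explicit Taylor coefficient formula
\[
\Phi_{\Lambda_n}^{(k)}(0)=\sum_{s_0+\cdots+s_n+n=k}\lambda_0^{s_0}\cdots\lambda_n^{s_n}\qquad(k\ge n)
\]
provided by Proposition \ref{PropTaylor}, together with the fact that $\Phi_{\Lambda_n}$ is the sum of its Taylor series at $0$ on all of $\mathbb{C}$ (which is immediate from its definition \eqref{defPhi} as the inverse Fourier--Laplace transform of an entire meromorphic integrand, or from the estimate in Proposition \ref{PropPhi}).

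First I would handle reality and positivity. Since each $\lambda_j\ge 0$, every monomial $\lambda_0^{s_0}\cdots\lambda_n^{s_n}$ is a nonnegative real number, so by the Taylor formula every coefficient $\Phi_{\Lambda_n}^{(k)}(0)$ is a nonnegative real number. Consequently the power series
\[
\Phi_{\Lambda_n}(x)=\sum_{k=n}^{\infty}\frac{\Phi_{\Lambda_n}^{(k)}(0)}{k!}x^k
\]
has nonnegative real coefficients, so it is real-valued on $\mathbb{R}$. For $x>0$ the term $k=n$ contributes $x^n/n!>0$ (using $\Phi_{\Lambda_n}^{(n)}(0)=1$) while all other terms are nonnegative, so $\Phi_{\Lambda_n}(x)>0$.

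Next I would derive the majorisation. Because $0\le\lambda_j\le\mu_j$, we have $\lambda_j^{s_j}\le\mu_j^{s_j}$ for every $s_j\in\mathbb{N}_0$, and termwise multiplication gives
\[
\lambda_0^{s_0}\cdots\lambda_n^{s_n}\le\mu_0^{s_0}\cdots\mu_n^{s_n}.
\]
Summing over all tuples with $s_0+\cdots+s_n+n=k$ and using the Taylor formula again yields
\[
0\le\Phi_{(\lambda_0,\ldots,\lambda_n)}^{(k)}(0)\le\Phi_{(\mu_0,\ldots,\mu_n)}^{(k)}(0)
\]
for every $k\ge n$. Applying the triangle inequality to the Taylor series exactly as in the proof of Proposition \ref{PropPhi} gives
\[
|\Phi_{(\lambda_0,\ldots,\lambda_n)}(z)|\le\sum_{k=n}^{\infty}\frac{\Phi_{(\lambda_0,\ldots,\lambda_n)}^{(k)}(0)}{k!}|z|^k\le\sum_{k=n}^{\infty}\frac{\Phi_{(\mu_0,\ldots,\mu_n)}^{(k)}(0)}{k!}|z|^k=\Phi_{(\mu_0,\ldots,\mu_n)}(|z|),
\]
which is the required inequality.

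There is really no hard step here: the whole argument is a direct exploitation of the nonnegativity of the Taylor coefficients once one has the explicit formula \eqref{eqTaylorcoeffphi}. The only point worth being careful about is to invoke the global convergence of the Taylor series of $\Phi_{\Lambda_n}$, which is already implicit in Proposition \ref{PropPhi}.
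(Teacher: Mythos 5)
Your proposal is correct and follows essentially the same route as the paper: both arguments rest on the explicit Taylor coefficient formula (\ref{eqTaylorcoeffphi}), the nonnegativity and termwise monotonicity of the coefficients under $0\le\lambda_j\le\mu_j$, and the triangle inequality applied to the globally convergent Taylor series (the paper packages that last step as the first inequality of Proposition \ref{PropPhi}). If anything, you are slightly more complete, since you explicitly verify the strict positivity $\Phi_{\Lambda_n}(x)>0$ for $x>0$ via the $k=n$ term, a point the paper's proof passes over in silence.
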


\begin{proof}
By (\ref{eqTaylorcoeffphi}) the Taylor coefficients of $\Phi_{\left(
\lambda_{0},\ldots,\lambda_{n}\right)  }\left(  x\right)  $ are real, so
$\Phi_{\left(  \lambda_{0},\ldots,\lambda_{n}\right)  }\left(  x\right)  $ is
a real number for real $x.$ Clearly $0\leq\lambda_{j}\leq\mu_{j}$ implies that
$0\leq\lambda_{j}^{s_{j}}\leq\mu_{j}^{s_{j}}$ for any natural number $s_{j}$,
$j=0,\ldots,n.$ Thus $0\leq\lambda_{0}^{s_{0}}\cdots\lambda_{n}^{s_{n}}\leq
\mu_{0}^{s_{0}}\cdots\mu_{n}^{s_{n}}$ for any $\left(  s_{0},\ldots
,s_{n}\right)  \in\mathbb{N}_{0}^{n+1}.$ By formula (\ref{eqTaylorcoeffphi})
we have
\[
0\leq\Phi_{\left(  \lambda_{0},\ldots,\lambda_{n}\right)  }^{\left(  k\right)
}\left(  0\right)  \leq\Phi_{\left(  \mu_{0},\ldots,\mu_{n}\right)  }^{\left(
k\right)  }\left(  0\right)
\]
for all $k\in\mathbb{N}_{0}.$ It follows that $0\leq\Phi_{\left(  \lambda
_{0},\ldots,\lambda_{n}\right)  }\left(  \left\vert z\right\vert \right)
\leq\Phi_{\left(  \mu_{0},\ldots,\mu_{n}\right)  }\left(  \left\vert
z\right\vert \right)  .$ The proof is finished by combing the last inequality
with (\ref{eqphiin}).
\end{proof}

\begin{theorem}
\label{ThmMainCon}Let $\lambda_{n},n\in\mathbb{N}_{0},$ be complex numbers
such that $\overline{\lim}_{n\rightarrow\infty}\left\vert \lambda
_{n}\right\vert /n\leq\beta.$ Then for any $\varepsilon>0$ there exists a
number $\alpha>0$ such that
\begin{equation}
n!\left\vert \Phi_{\Lambda_{n}}\left(  z\right)  \right\vert \leq
e^{\alpha\left\vert z\right\vert }\left(  \frac{e^{\left(  1+\varepsilon
\right)  \beta\left\vert z\right\vert }-1}{\left(  1+\varepsilon\right)
\beta}\right)  ^{n} \label{infanta}%
\end{equation}
for all natural numbers $n$ and for all complex numbers $z.$ In other words,
\begin{equation}
\overline{\lim_{n\rightarrow\infty}}\sqrt[n]{n!\left\vert \Phi_{\Lambda_{n}%
}\left(  z\right)  \right\vert }\leq\frac{e^{\beta\left\vert z\right\vert }%
-1}{\beta}. \label{infanta2}%
\end{equation}

\end{theorem}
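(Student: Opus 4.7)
The plan is to reduce the estimate to the equidistant case in Example \ref{Exequi} via the monotonicity principle in Proposition \ref{Propmonot}. The target bound has exactly the shape of the explicit formula $\Phi_{\text{equi},n}(x)=\frac{1}{n!}\omega^{-n}e^{\alpha x}(e^{\omega x}-1)^n$ with $\omega=(1+\varepsilon)\beta$, which is no coincidence: I will choose a dominating arithmetic progression whose common difference is exactly $(1+\varepsilon)\beta$.

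First I would use the hypothesis $\overline{\lim}_{n\to\infty}|\lambda_n|/n\le\beta$ to pick $N_0\in\mathbb{N}$ such that $|\lambda_j|\le(1+\varepsilon)\beta j$ for every $j\ge N_0$. Set $\alpha:=\max_{0\le j<N_0}|\lambda_j|$ (if $N_0=0$ take $\alpha=0$) and define the real dominating sequence
\[
\mu_j:=\alpha+(1+\varepsilon)\beta\,j,\qquad j=0,1,2,\ldots
\]
By construction $0\le|\lambda_j|\le\mu_j$ for every $j$: for $j<N_0$ because $\alpha\ge|\lambda_j|$, and for $j\ge N_0$ because the second term already dominates. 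Proposition \ref{Propmonot} then gives
\[
|\Phi_{\Lambda_n}(z)|\le\Phi_{(\mu_0,\ldots,\mu_n)}(|z|)\qquad\text{for every }z\in\mathbb{C}.
\]

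Next, since $(\mu_0,\ldots,\mu_n)$ is an arithmetic progression with initial term $\alpha$ and common difference $\omega=(1+\varepsilon)\beta$, the closed form in Example \ref{Exequi} applies verbatim, yielding
\[
\Phi_{(\mu_0,\ldots,\mu_n)}(|z|)=\frac{1}{n!}\,\frac{e^{\alpha|z|}\bigl(e^{(1+\varepsilon)\beta|z|}-1\bigr)^n}{((1+\varepsilon)\beta)^n}.
\]
Combining the two displays produces (\ref{infanta}) with this value of $\alpha$. To obtain (\ref{infanta2}) I then take $n$-th roots,
\[
\sqrt[n]{n!\,|\Phi_{\Lambda_n}(z)|}\le e^{\alpha|z|/n}\cdot\frac{e^{(1+\varepsilon)\beta|z|}-1}{(1+\varepsilon)\beta},
\]
let $n\to\infty$ so that $e^{\alpha|z|/n}\to1$, and finally let $\varepsilon\downarrow0$ to recover the clean upper bound $(e^{\beta|z|}-1)/\beta$.

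There is no serious obstacle; the only subtlety is that the growth condition $|\lambda_n|\le(1+\varepsilon)\beta n$ need only hold for large $n$, so a pure arithmetic progression $j\mapsto(1+\varepsilon)\beta j$ is not guaranteed to dominate the small indices. The role of the additive constant $\alpha=\max_{j<N_0}|\lambda_j|$ is precisely to absorb this finite discrepancy, and it appears on the right-hand side only through the harmless prefactor $e^{\alpha|z|}$ which drops out after taking $n$-th roots. This is also why the statement carries an $e^{\alpha|z|}$ factor rather than asserting the pointwise bound $n!|\Phi_{\Lambda_n}(z)|\le\bigl((e^{(1+\varepsilon)\beta|z|}-1)/((1+\varepsilon)\beta)\bigr)^n$ directly.
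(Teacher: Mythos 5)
Your proof is correct and follows essentially the same route as the paper: both dominate $|\lambda_j|$ by the arithmetic progression $\mu_j=\alpha+(1+\varepsilon)\beta j$, invoke Proposition \ref{Propmonot} to pass to $\Phi_{(\mu_0,\ldots,\mu_n)}(|z|)$, and evaluate the latter via the closed form of Example \ref{Exequi}. The only cosmetic difference is your explicit choice $\alpha=\max_{0\le j<N_0}|\lambda_j|$ where the paper simply takes $\alpha$ large enough; your closing remark on why the $e^{\alpha|z|}$ prefactor is unavoidable is a nice touch but changes nothing.
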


\begin{proof}
Let $\varepsilon>0.$ Then there exist $n_{0}$ such that $\left\vert
\lambda_{n}\right\vert \leq\left(  1+\varepsilon\right)  \beta n$ for all
$n\geq n_{0}.$ Take $\alpha>0$ large enough so that $\left\vert \lambda
_{n}\right\vert \leq$ $\left(  1+\varepsilon\right)  \beta n+\alpha$ for all
natural numbers $n.$ Define $\mu_{n}:=\alpha+\left(  1+\varepsilon\right)
\beta n$ for all $n,$ so $\left\vert \lambda_{n}\right\vert \leq\mu_{n}$ for
all $n\in\mathbb{N}_{0}.$ Propositions \ref{PropPhi} and \ref{Propmonot} and
Example \ref{Exequi} show that
\[
\left\vert \Phi_{\left(  \lambda_{0},\ldots,\lambda_{n}\right)  }\left(
z\right)  \right\vert \leq\Phi_{\left(  \left\vert \lambda_{0}\right\vert
,\ldots,\left\vert \lambda_{n}\right\vert \right)  }\left(  \left\vert
z\right\vert \right)  \leq\Phi_{\left(  \mu_{0},\ldots,\mu_{n}\right)
}\left(  \left\vert z\right\vert \right)  =\frac{1}{n!}e^{\alpha\left\vert
z\right\vert }\left(  \frac{e^{\left(  1+\varepsilon\right)  \beta\left\vert
z\right\vert }-1}{\left(  1+\varepsilon\right)  \beta}\right)  ^{n}.
\]
This shows (\ref{infanta}) and clearly (\ref{infanta2}) is a simple
consequence of (\ref{infanta}).
\end{proof}

The next theorem is our main result in this subsection and it will be used in
later sections:

\begin{theorem}
\label{ThmPower}Let $\beta>0$ and $\lambda_{n},n\in\mathbb{N}_{0},$ be complex
numbers such that $\overline{\lim}_{n\rightarrow\infty}\left\vert \lambda
_{n}\right\vert /n\leq\beta.$ Let $a_{n}$ be complex numbers for
$n\in\mathbb{N}_{0}$ and define $R^{\ast}$ through
\begin{equation}
\frac{1}{R^{\ast}}=\overline{\lim_{n\rightarrow\infty}}\sqrt[n]{\left\vert
\frac{a_{n}}{n!}\right\vert }. \label{eqconvradius}%
\end{equation}
If $R^{\ast}>0,$ then the series $\sum_{n=0}^{\infty}a_{n}\Phi_{\Lambda_{n}%
}\left(  z-x_{0}\right)  $ converges compactly and absolutely in the ball with
center $x_{0}$ and radius
\[
\frac{1}{\beta}\ln\left(  1+\beta R^{\ast}\right)  .
\]

\end{theorem}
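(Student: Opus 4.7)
The plan is to derive the convergence statement directly from the sharp uniform bound on the fundamental functions obtained in Theorem \ref{ThmMainCon}, and then apply a root test uniform in a closed ball. Specifically, by inequality (\ref{infanta}), for every $\varepsilon>0$ there exists $\alpha>0$ such that
\[
n!\,\bigl|\Phi_{\Lambda_{n}}(w)\bigr|\leq e^{\alpha|w|}\left(\frac{e^{(1+\varepsilon)\beta|w|}-1}{(1+\varepsilon)\beta}\right)^{n}
\]
for every complex number $w$ and every $n\in\mathbb{N}_{0}$. Substituting $w=z-x_{0}$ gives
\[
\bigl|a_{n}\Phi_{\Lambda_{n}}(z-x_{0})\bigr|\leq e^{\alpha|z-x_{0}|}\cdot\frac{|a_{n}|}{n!}\cdot\left(\frac{e^{(1+\varepsilon)\beta|z-x_{0}|}-1}{(1+\varepsilon)\beta}\right)^{n}.
\]

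Next I would fix a radius $r<\frac{1}{\beta}\ln(1+\beta R^{\ast})$ and show that the series converges uniformly on the closed ball $\{z:|z-x_{0}|\leq r\}$. The defining inequality $r<\frac{1}{\beta}\ln(1+\beta R^{\ast})$ is equivalent to $\frac{e^{\beta r}-1}{\beta}<R^{\ast}$. By continuity of the function $\varepsilon\mapsto\frac{e^{(1+\varepsilon)\beta r}-1}{(1+\varepsilon)\beta}$ at $\varepsilon=0$, there exists $\varepsilon>0$ with
\[
q:=\frac{1}{R^{\ast}}\cdot\frac{e^{(1+\varepsilon)\beta r}-1}{(1+\varepsilon)\beta}<1.
\]
Combining this $\varepsilon$ with the corresponding $\alpha$ from (\ref{infanta}) and applying the formula (\ref{eqconvradius}) for $R^{\ast}$, the root test yields
\[
\overline{\lim_{n\rightarrow\infty}}\,\sqrt[n]{\sup_{|z-x_{0}|\leq r}\bigl|a_{n}\Phi_{\Lambda_{n}}(z-x_{0})\bigr|}\leq\frac{1}{R^{\ast}}\cdot\frac{e^{(1+\varepsilon)\beta r}-1}{(1+\varepsilon)\beta}=q<1,
\]
since the factor $e^{\alpha|z-x_{0}|}$ is uniformly bounded on the closed ball and its $n$-th root tends to $1$. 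Thus $\sum_{n}a_{n}\Phi_{\Lambda_{n}}(z-x_{0})$ is dominated termwise by a convergent geometric series of constants on this ball, proving absolute and uniform convergence there; since every compact subset of the open ball of radius $\frac{1}{\beta}\ln(1+\beta R^{\ast})$ lies in such a closed ball of some $r<\frac{1}{\beta}\ln(1+\beta R^{\ast})$, compact convergence on the full open ball follows.

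The main obstacle is a purely technical one: the quantitative transfer from the $\limsup$ estimate (\ref{infanta2}) to a usable uniform bound requires the stronger form (\ref{infanta}) (with an explicit $\varepsilon$-parameter and subexponential prefactor $e^{\alpha|z|}$), because a bare $\limsup$ would only give pointwise convergence inside a radius depending on $z$. The function $\varepsilon\mapsto\frac{e^{(1+\varepsilon)\beta r}-1}{(1+\varepsilon)\beta}$ is continuous and increasing in $\varepsilon$, so one can always shave the radius down from $\frac{1}{\beta}\ln(1+\beta R^{\ast})$ to any strictly smaller $r$ and pick an admissible $\varepsilon$; this is exactly what makes the claimed radius sharp in the statement.
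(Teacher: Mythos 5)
Your proposal is correct and follows essentially the same route as the paper: both arguments rest on the uniform estimate (\ref{infanta}) from Theorem \ref{ThmMainCon}, choose $\varepsilon>0$ so that the resulting ratio is strictly less than $1$ on a closed ball of radius $r<\frac{1}{\beta}\ln\left(1+\beta R^{\ast}\right)$, and conclude by comparison with a geometric series. The only cosmetic difference is that you package the final step as a root test using $\overline{\lim}_{n}\sqrt[n]{\left\vert a_{n}\right\vert /n!}=1/R^{\ast}$ directly, whereas the paper first extracts the explicit bound $\left\vert a_{n}/n!\right\vert \leq\left(R^{\ast}-\varepsilon\right)^{-n}$ for large $n$.
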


\begin{proof}
Let $\rho<\left(  1/\beta\right)  \ln\left(  1+\beta R^{\ast}\right)  .$ Then
$\left(  e^{\beta\rho}-1\right)  /R^{\ast}<\beta.$ Take now $\varepsilon>0$
small enough so that
\begin{equation}
\frac{e^{\left(  1+\varepsilon\right)  \beta\rho}-1}{\left(  1+\varepsilon
\right)  \left(  R^{\ast}-\varepsilon\right)  }<\beta. \label{eqest1000}%
\end{equation}
Since $\frac{1}{R^{\ast}}<\frac{1}{R^{\ast}-\varepsilon},$ formula
(\ref{eqconvradius}) shows that there exists a natural number $n_{0}$ such
that
\[
\left\vert \frac{a_{n}}{n!}\right\vert \leq\left(  \frac{1}{R^{\ast
}-\varepsilon}\right)  ^{n}%
\]
for all $n\geq n_{0}.$ By Theorem \ref{ThmMainCon} there exists a natural
number $\alpha>0$ such that
\[
n!\left\vert \Phi_{\Lambda_{n}}\left(  z\right)  \right\vert \leq
e^{\alpha\left\vert z\right\vert }\left(  \frac{e^{\left(  1+\varepsilon
\right)  \beta\left\vert z\right\vert }-1}{\left(  1+\varepsilon\right)
\beta}\right)  ^{n}%
\]
for all complex numbers $z$ and for all natural numbers $n.$ The last two
inequalities lead to
\[
\sum_{n=n_{0}}^{\infty}\left\vert a_{n}\Phi_{\Lambda_{n}}\left(
z-x_{0}\right)  \right\vert \leq e^{\alpha\rho}\sum_{n=n_{0}}^{\infty}\left(
\frac{e^{\left(  1+\varepsilon\right)  \beta\rho}-1}{\left(  1+\varepsilon
\right)  \beta\left(  R^{\ast}-\varepsilon\right)  }\right)  ^{n}%
\]
valid for all $z$ with $\left\vert z-x_{0}\right\vert \leq\rho.$ This series
converges in view of the estimate (\ref{eqest1000}).
\end{proof}

\textbf{Example:} Let $\lambda_{n}=n+1$ for $n\in\mathbb{N}_{0},$ and consider
the constant function $f\left(  x\right)  =1.$ Then
\[
a_{n}:=\left(  \frac{d}{dx}-\lambda_{0}\right)  \cdots\left(  \frac{d}%
{dx}-\lambda_{n-1}\right)  f\left(  x_{0}\right)  =\left(  -1\right)
^{n}\lambda_{0}\cdots\lambda_{n-1}=\left(  -1\right)  ^{n}n!.
\]
Thus $\lim_{n\rightarrow\infty}\sqrt[n]{\left\vert a_{n}\right\vert /n!}=1.$
Further $\Phi_{\Lambda_{n}}\left(  x\right)  =e^{x}\left(  e^{x}-1\right)
^{n}/n!.$ According to Theorem \ref{ThmPower} (with $x_{0}=0$ and $\beta=1)$
the series
\begin{equation}
\sum_{n=0}^{\infty}a_{n}\Phi_{\Lambda_{n}}\left(  x\right)  =\sum
_{n=0}^{\infty}\left(  -1\right)  ^{n}e^{x}\left(  e^{x}-1\right)  ^{n}%
=e^{x}\frac{1}{1-\left(  1-e^{x}\right)  }=1 \label{eqextaylor}%
\end{equation}
converges for all complex numbers $z$ with $\left\vert z\right\vert <\ln2.$ Of
course, this can be seen directly for real $x.$ Namely, if $e^{x}-1<1$ (which
means that $e^{x}<2,$ so $x<\ln2),$ the series obviously converges. On the
other hand, for $e^{x}-1\geq1$ we do not have convergence. Theorem
\ref{ThmTaylor2} below provides the following interpretation: The constant
function $1$ has the Taylor series expansion (\ref{eqextaylor}) on $\left(
-\ln2,\ln2\right)  $ with respect to the differential operators $\left(
d/dx-\lambda_{0}\right)  \cdots\left(  d/dx-\lambda_{n-1}\right)  .$

An analogue of Theorem \ref{ThmPower} can be proved for a \emph{bounded}
sequence of exponents $\lambda_{n}$ (see Theorem \ref{ThmPower2} below),
either by using Proposition \ref{PropPhi} or by applying Theorem
\ref{ThmPower} for $\beta>0$ arbitrary. In the latter case, the rule of
L'Hospital
\[
\lim_{\beta\rightarrow0}\frac{\ln\left(  1+\beta R^{\ast}\right)  }{\beta
}=\lim_{\beta\rightarrow0}\frac{R^{\ast}}{1+\beta R^{\ast}}=R^{\ast}%
\]
can be used for computing the correct radius of convergence:

\begin{theorem}
\label{ThmPower2} Let $\lambda_{n},n\in\mathbb{N}_{0},$ be a bounded sequence
of complex numbers. Let $a_{n}$ be complex numbers for $n\in\mathbb{N}_{0}$
and define $R^{\ast}$ as in (\ref{eqconvradius}). If $R^{\ast}>0$ then the
series $\sum_{n=0}^{\infty}a_{n}\Phi_{\Lambda_{n}}\left(  z-x_{0}\right)  $
converges compactly and absolutely in the ball with center $x_{0}$ and radius
$R^{\ast}.$
\end{theorem}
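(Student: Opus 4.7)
The plan is to reduce the claim to ordinary power-series convergence by invoking the polynomial-times-exponential bound from Proposition \ref{PropPhi}; this is cleaner than deriving it as a $\beta\to 0^{+}$ limit of Theorem \ref{ThmPower}, although that route also works.

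Concretely, since $\left(\lambda_{n}\right)$ is bounded, choose $M$ with $\left\vert \lambda_{j}\right\vert \leq M$ for all $j$. Proposition \ref{PropPhi} then yields the uniform bound
\[
\left\vert \Phi_{\Lambda_{n}}\left(w\right)\right\vert \leq \frac{1}{n!}\left\vert w\right\vert ^{n}e^{M\left\vert w\right\vert },\qquad w\in\mathbb{C},\ n\in\mathbb{N}_{0}.
\]
Fix $\rho$ with $0<\rho<R^{\ast}$ and restrict $z$ to the closed ball $\left\vert z-x_{0}\right\vert \leq\rho$. Multiplying the above estimate by $\left\vert a_{n}\right\vert $ and summing gives
\[
\sum_{n=0}^{\infty}\left\vert a_{n}\Phi_{\Lambda_{n}}\left(z-x_{0}\right)\right\vert \leq e^{M\rho}\sum_{n=0}^{\infty}\frac{\left\vert a_{n}\right\vert }{n!}\rho^{n}.
\]
The right-hand side is an ordinary power series in $\rho$ whose radius of convergence equals $R^{\ast}$ by the definition (\ref{eqconvradius}), so it converges since $\rho<R^{\ast}$. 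This yields absolute and uniform convergence on the closed ball $\left\vert z-x_{0}\right\vert \leq\rho$; as $\rho<R^{\ast}$ is arbitrary, compact and absolute convergence on the open ball of radius $R^{\ast}$ about $x_{0}$ follows.

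As the alternative suggested in the excerpt, one could instead invoke Theorem \ref{ThmPower} for arbitrary $\beta>0$ — permissible because $\overline{\lim}_{n}\left\vert \lambda_{n}\right\vert /n=0$ for a bounded sequence — to get convergence on the ball of radius $\left(1/\beta\right)\ln\left(1+\beta R^{\ast}\right)$, and then send $\beta\to 0^{+}$ via the L'Hospital identity displayed just before the theorem. I do not anticipate any genuine obstacle: in the bounded-exponent regime, Proposition \ref{PropPhi} already provides a majorant of the form $\left\vert w\right\vert ^{n}/n!$ times a fixed exponential factor, so none of the sharper asymptotic machinery of Theorem \ref{ThmMainCon} and Theorem \ref{ThmPower} is actually required. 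The only minor point to verify is the compatibility of the two $\overline{\lim}$'s — that of $\sqrt[n]{\left\vert a_{n}\right\vert /n!}$ controlling the majorant series and that defining $R^{\ast}$ — but these coincide by construction.
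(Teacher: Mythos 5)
Your proof is correct and follows exactly the first of the two routes the paper itself indicates (the paper only sketches this result, saying it follows "either by using Proposition \ref{PropPhi} or by applying Theorem \ref{ThmPower} for $\beta>0$ arbitrary"); your argument via the uniform majorant $\left\vert w\right\vert^{n}e^{M\left\vert w\right\vert}/n!$ and Cauchy--Hadamard is a complete and accurate fleshing-out of that sketch.
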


It is a natural and interesting question whether in (\ref{infanta2}) the
\emph{limit} exists. We mention two results addressing this problem but we
omit the proofs since we shall not need them in the following.

\begin{theorem}
Suppose that $\lambda_{n},n\in\mathbb{N}_{0},$ is a bounded sequence of real
numbers and $\Lambda_{n}=\left(  \lambda_{0},\ldots,\lambda_{n}\right)  .$
Then the following limit exists for all $x\geq0$:
\[
\lim_{n\rightarrow\infty}\sqrt[n]{n!\Phi_{\Lambda_{n}}\left(  x\right)  }=x.
\]

\end{theorem}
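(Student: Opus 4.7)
The strategy is to squeeze $n!\,\Phi_{\Lambda_n}(x)$ between quantities of the form $x^n$ and $x^n e^{o(n)}$, from which taking $n$-th roots immediately forces the limit to be $x$. The machinery already developed supplies the upper direction in spirit (Proposition \ref{PropPhi} and the coefficient-wise comparison of Proposition \ref{Propmonot}); the new ingredient needed is a matching lower estimate, which in turn requires a reduction to the case where all exponents are nonnegative.

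First I would remove possible negative values of the $\lambda_j$ by a shift. Since $(\lambda_n)$ is bounded, choose $c\geq 0$ with $\mu_j:=\lambda_j+c\geq 0$ for every $j$. A direct computation gives the intertwining identity $(d/dx-\mu_j)(e^{cx}f)=e^{cx}(d/dx-\lambda_j)f$, so $g(x):=e^{cx}\Phi_{\Lambda_n}(x)$ belongs to $E_{(\mu_0,\ldots,\mu_n)}$; the Leibniz rule, combined with $\Phi_{\Lambda_n}^{(k)}(0)=\delta_{kn}$ for $k\leq n$, yields $g^{(k)}(0)=\delta_{kn}$ for $k\leq n$ as well, hence
\[
\Phi_{(\mu_0,\ldots,\mu_n)}(x)=e^{cx}\,\Phi_{(\lambda_0,\ldots,\lambda_n)}(x).
\]
Therefore $\sqrt[n]{n!\,\Phi_{\Lambda_n}(x)}=e^{-cx/n}\sqrt[n]{n!\,\Phi_{(\mu_0,\ldots,\mu_n)}(x)}$, and since the prefactor tends to $1$, it is enough to prove the theorem when $0\leq \mu_j\leq M$ for all $j$.

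In the nonnegative case, I would invoke formula (\ref{eqTaylorcoeffphi}), which expresses each Taylor coefficient $\Phi_{(\mu_0,\ldots,\mu_n)}^{(k)}(0)$ as a nonnegative sum of monomials $\mu_0^{s_0}\cdots\mu_n^{s_n}$, monotone in each $\mu_j$. This is exactly the mechanism used in the proof of Proposition \ref{Propmonot}, but I would apply it twice. Replacing all $\mu_j$ by $0$ gives the lower Taylor majorization $\Phi_{(\mu_0,\ldots,\mu_n)}(x)\geq \Phi_{(0,\ldots,0)}(x)=x^n/n!$ for $x\geq 0$, while replacing all $\mu_j$ by $M$ gives the upper one $\Phi_{(\mu_0,\ldots,\mu_n)}(x)\leq \Phi_{(M,\ldots,M)}(x)=x^n e^{Mx}/n!$ (the explicit value of the constant-sequence fundamental function can be read off from Example \ref{Exequi} in the limit $\omega\to 0$, or verified directly). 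Combined, these yield for $x\geq 0$ the sandwich
\[
x^n\;\leq\;n!\,\Phi_{(\mu_0,\ldots,\mu_n)}(x)\;\leq\;x^n e^{Mx},
\]
and taking $n$-th roots produces $x\leq \sqrt[n]{n!\,\Phi_{(\mu_0,\ldots,\mu_n)}(x)}\leq x\,(e^{Mx})^{1/n}\to x$. The case $x=0$ is trivial because $\Phi_{\Lambda_n}(0)=0$ for $n\geq 1$.

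The only delicate point is the lower bound, since Proposition \ref{Propmonot} is stated only as an upper estimate; but its proof already contains coefficient-wise monotonicity in both directions, so nothing beyond rereading that argument is required. The shift step is precisely what lets us put ourselves into the range $\mu_j\geq 0$ where this two-sided monotonicity applies, and this is the crux of the proof.
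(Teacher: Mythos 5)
The paper states this theorem but explicitly omits its proof (``we omit the proofs since we shall not need them in the following''), so there is no argument in the text to compare against; your proposal has to stand on its own, and it does. The two ingredients are both sound. First, the shift identity $\Phi_{(\lambda_0+c,\ldots,\lambda_n+c)}(x)=e^{cx}\Phi_{(\lambda_0,\ldots,\lambda_n)}(x)$ follows exactly as you say, either from the intertwining relation $\bigl(\tfrac{d}{dx}-(\lambda+c)\bigr)(e^{cx}f)=e^{cx}\bigl(\tfrac{d}{dx}-\lambda\bigr)f$ together with uniqueness of the Cauchy problem defining the fundamental function, or directly from (\ref{defPhi}) by the substitution $z\mapsto z+c$; boundedness of the sequence is what makes a single $c$ work for all $n$, and the prefactor $e^{-cx/n}\to 1$ is harmless. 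Second, the two-sided estimate $x^{n}/n!\leq\Phi_{(\mu_0,\ldots,\mu_n)}(x)\leq x^{n}e^{Mx}/n!$ for $0\leq\mu_j\leq M$ and $x\geq 0$ is correct: the upper bound is literally (\ref{eqphiin}) from Proposition \ref{PropPhi} (with $M=\sup_j\mu_j$ independent of $n$ by boundedness), and the lower bound follows because by (\ref{eqTaylorcoeffphi}) all Taylor coefficients of $\Phi_{(\mu_0,\ldots,\mu_n)}$ at $0$ are nonnegative and the $n$-th one equals $1$, so truncating the (everywhere convergent) Taylor series of this entire function at its leading term gives $\Phi_{(\mu_0,\ldots,\mu_n)}(x)\geq x^{n}/n!$ for $x\geq 0$. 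You are right that this lower estimate is the one genuinely new ingredient relative to what the paper proves, since Propositions \ref{PropPhi} and \ref{Propmonot} only record upper bounds; it also supplies the positivity of $n!\Phi_{\Lambda_n}(x)$ needed for the $n$-th root to make sense. Taking $n$-th roots in the sandwich and letting $n\to\infty$ finishes the case $x>0$, and $x=0$ is trivial. The argument is complete and correct.
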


\begin{theorem}
Let $\lambda_{n},n\in\mathbb{N}_{0}$ be a sequence of real numbers such that
the limit $\lim_{n\rightarrow\infty}\lambda_{n}/n=\beta$ exists. Then the
following limit exists for all $x\geq0$:
\[
\lim_{n\rightarrow\infty}\sqrt[n]{n!\Phi_{\Lambda_{n}}\left(  x\right)
}=\frac{e^{\beta x}-1}{\beta}.
\]

\end{theorem}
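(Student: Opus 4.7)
The plan is to sandwich $\Phi_{\Lambda_n}(x)$ between the fundamental functions of two equidistant sequences of exponents, apply the explicit formula of Example \ref{Exequi}, take $n$-th roots, and then let the bracket tighten. This produces the correct upper and lower bounds simultaneously and works uniformly for any sign of $\beta$. Note that Theorem \ref{ThmMainCon} only gives an upper bound of the form $(e^{|\beta|x}-1)/|\beta|$ (since it uses $|\lambda_n|$), so the bound is not tight when $\beta<0$, and in any case a separate lower bound is needed.

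\textbf{Step 1 (Monotonicity in the exponents).} I would first establish that for real $\lambda_0,\ldots,\lambda_n$ and $\mu_0,\ldots,\mu_n$ satisfying $\mu_j\leq\lambda_j$ for every $j$, one has $0\leq\Phi_{M_n}(x)\leq\Phi_{\Lambda_n}(x)$ for every $x\geq 0$. The identity (\ref{rec0}) together with the initial conditions $\Phi_{\Lambda_n}^{(k)}(0)=0$ for $k<n$ yields, by variation of parameters, the Duhamel representation
\[
\Phi_{\Lambda_n}(x)=\int_0^x e^{\lambda_n(x-t)}\,\Phi_{\Lambda_{n-1}}(t)\,dt,\qquad x\geq 0.
\]
Since $x-t\geq 0$, we have $e^{\lambda_n(x-t)}\geq e^{\mu_n(x-t)}$, and by induction $\Phi_{\Lambda_{n-1}}(t)\geq\Phi_{M_{n-1}}(t)\geq 0$. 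The claim follows by induction, with the base case $\Phi_{\Lambda_0}(x)=e^{\lambda_0 x}$. Observe that Proposition \ref{Propmonot} is not directly applicable here because some of the $\lambda_j$ may be negative; the Duhamel route bypasses this restriction.

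\textbf{Step 2 (Equidistant bracketing).} Fix $\varepsilon>0$ small enough that $\beta-\varepsilon$ and $\beta+\varepsilon$ are both nonzero. The hypothesis $\lambda_n/n\to\beta$ gives an $n_0$ with $(\beta-\varepsilon)j\leq\lambda_j\leq(\beta+\varepsilon)j$ for all $j\geq n_0$. Choosing $\alpha_0\in\mathbb{R}$ sufficiently negative and $\alpha_1\in\mathbb{R}$ sufficiently positive to absorb the finitely many exceptional indices $j<n_0$, one obtains
\[
\mu_j:=\alpha_0+(\beta-\varepsilon)j\;\leq\;\lambda_j\;\leq\;\alpha_1+(\beta+\varepsilon)j\;=:\nu_j\qquad\text{for all }j\geq 0.
\]

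\textbf{Step 3 (Explicit evaluation and passage to the limit).} The sequences $M_n=(\mu_0,\ldots,\mu_n)$ and $N_n=(\nu_0,\ldots,\nu_n)$ are equidistant, so Example \ref{Exequi} yields
\[
n!\,\Phi_{M_n}(x)=e^{\alpha_0 x}\!\left(\tfrac{e^{(\beta-\varepsilon)x}-1}{\beta-\varepsilon}\right)^{\!n},\qquad n!\,\Phi_{N_n}(x)=e^{\alpha_1 x}\!\left(\tfrac{e^{(\beta+\varepsilon)x}-1}{\beta+\varepsilon}\right)^{\!n}.
\]
For $x>0$ the two base quantities $(e^{cx}-1)/c$ are positive (whether $c>0$ or $c<0$), so after combining with Step 1 and extracting $n$-th roots,
\[
e^{\alpha_0 x/n}\,\tfrac{e^{(\beta-\varepsilon)x}-1}{\beta-\varepsilon}\;\leq\;\sqrt[n]{n!\,\Phi_{\Lambda_n}(x)}\;\leq\;e^{\alpha_1 x/n}\,\tfrac{e^{(\beta+\varepsilon)x}-1}{\beta+\varepsilon}.
\]
Letting $n\to\infty$ the prefactors $e^{\alpha_i x/n}$ tend to $1$, and then letting $\varepsilon\to 0$ (with the natural continuous extension $(e^{cx}-1)/c\to x$ as $c\to 0$) pinches both sides to $(e^{\beta x}-1)/\beta$. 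The case $x=0$ is trivial since $\Phi_{\Lambda_n}(0)=0$ for $n\geq 1$ corresponds to the right-hand side $0$ in the limit.

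\textbf{Main obstacle.} The delicate points are the real (possibly negative) range of the exponents, which rules out a direct application of Proposition \ref{Propmonot}, and the global nature of the bracketing in Step 2, since the equidistant comparison has to dominate $\lambda_j$ for \emph{all} $j$ (not only $j\geq n_0$). The Duhamel representation handles the former cleanly, and a single shift of the intercepts $\alpha_0,\alpha_1$ absorbs the finitely many initial exponents for the latter.
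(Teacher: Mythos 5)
The paper states this theorem without proof (``we omit the proofs since we shall not need them in the following''), so there is no argument of the authors to compare yours against; judged on its own, your proof is correct and complete. The one genuinely new ingredient you need beyond what the paper provides is the two-sided monotonicity of Step 1, since Proposition \ref{Propmonot} only compares exponents satisfying $0\leq\lambda_j\leq\mu_j$ and moreover only bounds $|\Phi_{\Lambda_n}(z)|$ from above; your Duhamel representation $\Phi_{\Lambda_n}(x)=\int_0^x e^{\lambda_n(x-t)}\Phi_{\Lambda_{n-1}}(t)\,dt$ follows correctly from the recursion (\ref{rec0}) and the vanishing initial data, and the induction it supports is valid for arbitrary real exponents precisely because $x-t\geq 0$ on the domain of integration. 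The remaining steps --- absorbing the finitely many exceptional indices into the intercepts $\alpha_0,\alpha_1$, evaluating the equidistant brackets by Example \ref{Exequi}, and noting that $(e^{cx}-1)/c>0$ for $x>0$ and all real $c\neq 0$ so that $n$-th roots can be taken --- are all sound, and the continuity of $c\mapsto (e^{cx}-1)/c$ at $c=\beta$ (including $\beta=0$) closes the squeeze. This also correctly explains why the upper bound (\ref{infanta2}) of Theorem \ref{ThmMainCon} alone cannot give the result when $\beta<0$.
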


\subsection{Taylor series for linear differential operators with constant
coefficients}

Let $\lambda_{0},\ldots\lambda_{n}$ be complex numbers. As analogue of the
$n$-th derivative $f^{\left(  n\right)  }\left(  t\right)  $ of a function
$f\left(  t\right)  ,$ we define in the setting of linear differential
operators
\[
D^{\left(  n\right)  }f\left(  t\right)  :=\left(  \frac{d}{dt}-\lambda
_{0}\right)  \cdots\left(  \frac{d}{dt}-\lambda_{n-1}\right)  f\left(
t\right)  .
\]
To avoid overburdened indexes, we dropped the dependence on $\lambda_{j}$ in
the above notation $D^{\left(  n\right)  }f\left(  t\right)  .$ For $n=0$ we
define $D^{\left(  0\right)  }f\left(  t\right)  =f\left(  t\right)  .$ We
shall also use the notation
\[
D_{\lambda}f\left(  t\right)  :=\frac{d}{dt}f\left(  t\right)  -\lambda
f\left(  t\right)
\]
which should be distinguished from the notation $D^{\left(  n\right)  }f.$

The main result of this subsection is Theorem \ref{ThmTaylor2} providing a
Taylor type expansion of a smooth function according to the fundamental
functions $\Phi_{n}\left(  x-x_{0}\right)  $. As a preparation we need the
following well-known result whose proof is included for convenience of the
reader (cf. e.g.\ \cite{Schu81}).

\begin{theorem}
Let $\lambda_{0},\lambda_{1},\ldots,\lambda_{n}$ be complex numbers and define
$\Lambda_{k}=\left(  \lambda_{0},\ldots,\lambda_{k}\right)  $ for
$k\in\left\{  0,\ldots,n\right\}  .$ Assume that $f:\left[  x_{0},x_{0}%
+\gamma\right]  \rightarrow\mathbb{C}$ is $C^{n+1}$ for some $\gamma>0$. Then
for any $m\leq n$ and $x\in\left[  x_{0},x_{0}+\gamma\right]  $
\begin{equation}
f\left(  x\right)  =\sum_{k=0}^{m}D^{\left(  k\right)  }f\left(  x_{0}\right)
\Phi_{\Lambda_{k}}\left(  x-x_{0}\right)  +\int_{x_{0}}^{x}D^{\left(
m+1\right)  }f\left(  t\right)  \cdot\Phi_{\Lambda_{m}}\left(  x-t\right)  dt.
\label{eqtaylor}%
\end{equation}

\end{theorem}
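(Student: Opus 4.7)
My plan is to prove the identity by induction on $m$, using the recursion $(d/dx - \lambda_{n+1})\Phi_{\Lambda_{n+1}}(x) = \Phi_{\Lambda_n}(x)$ from formula (\ref{rec0}) as the main algebraic engine.

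\textbf{Base case $m=0$.} Here $\Phi_{\Lambda_0}$ solves $\Phi_{\Lambda_0}'=\lambda_0\Phi_{\Lambda_0}$ with $\Phi_{\Lambda_0}(0)=1$, so $\Phi_{\Lambda_0}(y)=e^{\lambda_0 y}$, and $D^{(1)}f(t)=f'(t)-\lambda_0 f(t)$. The identity to be checked is then just the variation-of-constants formula
\begin{equation*}
f(x)=f(x_0)\,e^{\lambda_0(x-x_0)}+\int_{x_0}^{x}\bigl(f'(t)-\lambda_0 f(t)\bigr)\,e^{\lambda_0(x-t)}\,dt,
\end{equation*}
which follows by integrating the identity $\tfrac{d}{dt}\!\left[f(t)e^{\lambda_0(x-t)}\right]=(f'(t)-\lambda_0 f(t))\,e^{\lambda_0(x-t)}$ from $x_0$ to $x$.

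\textbf{Inductive step.} Assuming (\ref{eqtaylor}) holds for some $m<n$, I would transform the remainder integral $\int_{x_0}^{x}D^{(m+1)}f(t)\Phi_{\Lambda_m}(x-t)\,dt$ by integrating by parts, with the aim of replacing $\Phi_{\Lambda_m}(x-t)$ by $\Phi_{\Lambda_{m+1}}(x-t)$. Setting $F(t)=D^{(m+1)}f(t)$ and $G(t)=\Phi_{\Lambda_{m+1}}(x-t)$, a direct computation using $F'(t)=D^{(m+2)}f(t)+\lambda_{m+1}F(t)$ and the recursion (\ref{rec0}) in the form $\Phi_{\Lambda_{m+1}}'(y)=\Phi_{\Lambda_m}(y)+\lambda_{m+1}\Phi_{\Lambda_{m+1}}(y)$ yields
\begin{equation*}
\tfrac{d}{dt}[F(t)G(t)]=D^{(m+2)}f(t)\,\Phi_{\Lambda_{m+1}}(x-t)-D^{(m+1)}f(t)\,\Phi_{\Lambda_m}(x-t).
\end{equation*}
Integrating from $x_0$ to $x$ and using $\Phi_{\Lambda_{m+1}}(0)=0$ (since $m+1\geq1$), I get
\begin{equation*}
\int_{x_0}^{x}D^{(m+1)}f(t)\Phi_{\Lambda_m}(x-t)\,dt=D^{(m+1)}f(x_0)\,\Phi_{\Lambda_{m+1}}(x-x_0)+\int_{x_0}^{x}D^{(m+2)}f(t)\,\Phi_{\Lambda_{m+1}}(x-t)\,dt.
\end{equation*}
Plugging this back into the induction hypothesis produces (\ref{eqtaylor}) for $m+1$, completing the induction.

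The only mildly tricky ingredient is keeping the signs and initial values straight when differentiating $\Phi_{\Lambda_{m+1}}(x-t)$ in $t$ and applying the recursion; the smoothness hypothesis $f\in C^{n+1}$ ensures $D^{(m+2)}f$ is continuous throughout, so every integration by parts above is legitimate.
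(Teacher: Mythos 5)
Your proposal is correct and follows essentially the same route as the paper: induction on $m$, with the base case handled by the integrating-factor identity and the inductive step by integration by parts combined with the recursion (\ref{rec0}); your direct computation of $\tfrac{d}{dt}[F(t)G(t)]$ is just an inlined version of the paper's Proposition \ref{PropPI}. The signs, the use of $\Phi_{\Lambda_{m+1}}(0)=0$, and the commutation giving $F'(t)=D^{(m+2)}f(t)+\lambda_{m+1}F(t)$ all check out.
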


\begin{proof}
We shall prove the statement by induction over $m.$ For $m=0$ this means that
\[
f\left(  x\right)  =f\left(  x_{0}\right)  \Phi_{(\lambda_{0})}\left(
x-x_{0}\right)  +\int_{x_{0}}^{x}\left(  \frac{d}{dt}-\lambda_{0}\right)
f\left(  t\right)  \cdot\Phi_{(\lambda_{0})}\left(  x-t\right)  dt.
\]
Since $\Phi_{(\lambda_{0})}\left(  x\right)  =e^{\lambda_{0}x},$ this is
equivalent to
\[
f\left(  x\right)  -e^{\lambda_{0}\left(  x-x_{0}\right)  }f\left(
x_{0}\right)  =\int_{x_{0}}^{x}\left(  \frac{d}{dt}-\lambda_{0}\right)
f\left(  t\right)  \cdot e^{\lambda_{0}\left(  x-t\right)  }dt=e^{\lambda
_{0}x}\int_{x_{0}}^{x}\frac{d}{dt}\left(  e^{-\lambda_{0}t}f\left(  t\right)
\right)  dt,
\]
which is obviously true since
\[
\frac{d}{dt}\left(  e^{-\lambda_{0}t}f\left(  t\right)  \right)
=e^{-\lambda_{0}t}\left(  \frac{d}{dt}-\lambda_{0}\right)  f\left(  t\right)
.
\]
Suppose now that the statement is true for $m<n$ and we want to prove it for
$m+1\leq n.$ It suffices to prove that
\[
A_{m}:=\int_{x_{0}}^{x}D^{\left(  m+1\right)  }f\left(  t\right)  \cdot
\Phi_{\Lambda_{m}}\left(  x-t\right)  dt
\]
is equal to
\[
B_{m}:=D^{\left(  m+1\right)  }f\left(  x_{0}\right)  \Phi_{\Lambda_{m+1}%
}\left(  x-x_{0}\right)  +\int_{x_{0}}^{x}D^{\left(  m+2\right)  }f\left(
t\right)  \cdot\Phi_{\Lambda_{m+1}}\left(  x-t\right)  dt.
\]
Using the recursion $\Phi_{\Lambda_{m+1}}^{\prime}\left(  t\right)
=\Phi_{\Lambda_{m}}\left(  t\right)  +\lambda_{m+1}\Phi_{\Lambda_{m+1}}\left(
t\right)  $ in (\ref{rec0}), one obtains
\[
\frac{d}{dt}\left(  \Phi_{\Lambda_{m+1}}\left(  x-t\right)  \right)
=-\Phi_{\Lambda_{m+1}}^{\prime}\left(  x-t\right)  =-\Phi_{\Lambda_{m}}\left(
x-t\right)  -\lambda_{m+1}\Phi_{\Lambda_{m+1}}\left(  x-t\right)
\]
and therefore $D_{-\lambda_{m+1}}\Phi_{\Lambda_{m+1}}\left(  x-t\right)
=-\Phi_{\Lambda_{m}}\left(  x-t\right)  .$ Thus
\[
A_{m}=-\int_{x_{0}}^{x}D^{\left(  m+1\right)  }f\left(  t\right)  \cdot
D_{-\lambda_{m+1}}\left(  \Phi_{\Lambda_{m+1}}\left(  x-t\right)  \right)
dt.
\]
Proposition \ref{PropPI} below applied to $g=\Phi_{\Lambda_{m+1}}\left(
x-t\right)  ,$ $f=D^{\left(  m+1\right)  }f\left(  t\right)  $ and
$\lambda=\lambda_{m+1}$ gives
\[
A_{m}=-D^{\left(  m+1\right)  }f\left(  t\right)  \Phi_{\Lambda_{m+1}}\left(
x-t\right)  \mid_{x_{0}}^{x}+\int_{x_{0}}^{x}D_{\lambda_{m+1}}D^{\left(
m+1\right)  }f\left(  t\right)  \cdot\Phi_{\Lambda_{m+1}}\left(  x-t\right)
dt,
\]
and the result is proven since $D_{\lambda_{m+1}}D^{\left(  m+1\right)
}f\left(  t\right)  =D^{\left(  m+2\right)  }f\left(  t\right)  .$
\end{proof}

\begin{proposition}
\label{PropPI}Let $\lambda$ be a complex number and let $f,g:\left[
a,b\right]  \rightarrow\mathbb{C}$ be continuously differentiable. Then for
any $x_{0},x\in\left[  a,b\right]  $ with $x>x_{0},$ holds
\[
\int_{x_{0}}^{x}f\left(  t\right)  \cdot D_{-\lambda}g\left(  t\right)
\ dt=f\left(  t\right)  \cdot g\left(  t\right)  \mid_{x_{0}}^{x}-\int_{x_{0}%
}^{x}D_{\lambda}f\left(  t\right)  \cdot g\left(  t\right)  dt.
\]

\end{proposition}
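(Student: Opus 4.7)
The plan is to reduce the identity to the ordinary product rule and the fundamental theorem of calculus. The key observation is that the operators $D_{\lambda}$ and $D_{-\lambda}$ are designed so that their ``cross terms'' involving $\lambda$ cancel when combined via a Leibniz-type computation.

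First I would expand both sides according to the definitions: $D_{-\lambda} g(t) = g'(t) + \lambda g(t)$ and $D_{\lambda} f(t) = f'(t) - \lambda f(t)$. Then I would compute the pointwise sum
\[
f(t)\, D_{-\lambda} g(t) + D_{\lambda} f(t) \cdot g(t) = f(t)g'(t) + \lambda f(t) g(t) + f'(t) g(t) - \lambda f(t) g(t) = \bigl(f g\bigr)'(t),
\]
so the $\lambda$-contributions cancel and one is left exactly with the derivative of the product $fg$.

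Since $f, g \in C^1([a,b])$, the product $fg$ is continuously differentiable on $[a,b]$, and I would apply the fundamental theorem of calculus to obtain
\[
\int_{x_0}^{x} \bigl(fg\bigr)'(t)\, dt = f(t) g(t)\Big|_{x_0}^{x}.
\]
Rearranging the identity and integrating term by term (which is legitimate since both summands are continuous on $[x_0,x]$) yields the claimed formula. There is no real obstacle: the whole content is verifying the algebraic cancellation of the $\lambda$ terms, which is what makes the pairing $(D_\lambda, D_{-\lambda})$ the correct analogue of formal adjointness for classical integration by parts.
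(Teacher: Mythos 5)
Your proof is correct and is essentially the same as the paper's: the paper expands $D_{-\lambda}g = g' + \lambda g$ and invokes integration by parts on the $fg'$ term, which is just your observation that $f\,D_{-\lambda}g + D_{\lambda}f\cdot g = (fg)'$ followed by the fundamental theorem of calculus. No gap; both arguments reduce to the same cancellation of the $\lambda$ terms.
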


\begin{proof}
Partial integration yields $\int_{x_{0}}^{x}fg^{\prime}dt=f\cdot g\mid_{x_{0}%
}^{x}-\int_{x_{0}}^{x}f^{\prime}gdt.$ Then
\[
\int_{x_{0}}^{x}f\left(  t\right)  \cdot D_{-\lambda}g\left(  t\right)
\ dt=\int_{x_{0}}^{x}f\left(  t\right)  \cdot\ \left(  g^{\prime}\left(
t\right)  +\lambda g\right)  dt=f\cdot g\mid_{x_{0}}^{x}-\int_{x_{0}}%
^{x}f^{\prime}gdt+\lambda\int_{x_{0}}^{x}f\left(  t\right)  \cdot\ gdt,
\]
which gives the statement.
\end{proof}

The next result gives a simple sufficient condition such that the "Taylor
polynomial", defined by (\ref{eqkey2}) below, converges to $f.$ This criterion
is based on estimates of derivatives $D^{\left(  2n\right)  }f\left(
t\right)  $ of even order motivated by the results in Section 2 for the
Fourier-Laplace coefficients of a polyharmonic function of infinite order. It
is also instructive to compare the result with the classical polynomial case
(see e.g.\ \cite{KhSh94} for a different approach).

\begin{theorem}
\label{ThmTaylor1}Let $\lambda_{n},n\in\mathbb{N}_{0},$ be complex numbers
such that $\overline{\lim}_{n\rightarrow\infty}\left|  \lambda_{n}\right|
/n\leq\beta$ for some $\beta\geq0$. Assume that $f\in C^{\infty}\left[
x_{0},x_{0}+\gamma\right]  $ with $\gamma>0$ satisfies the following property:
there exist constants $\sigma\geq0$ and $C>0$ such that
\begin{equation}
\left|  D^{\left(  2n\right)  }f\left(  t\right)  \right|  \leq C\cdot\left(
2n\right)  !\cdot\sigma^{2n} \label{eqkey}%
\end{equation}
for all $t\in\left[  x_{0},x_{0}+\gamma\right]  $ and $n\in\mathbb{N}_{0}.$
Then
\begin{equation}
s_{2n-1}\left(  x\right)  :=\sum_{k=0}^{2n-1}D^{\left(  k\right)  }f\left(
x_{0}\right)  \Phi_{\Lambda_{k}}\left(  x-x_{0}\right)  \label{eqkey2}%
\end{equation}
converges uniformly to $f\left(  x\right)  $ on the interval $\left[
x_{0},x_{0}+\delta\right]  $ for a suitable positive $\delta<\gamma.$
\end{theorem}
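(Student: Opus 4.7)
The plan is to apply the Taylor-type identity (\ref{eqtaylor}) from the previous theorem with $m=2n-1$: this gives
\[
f(x)=s_{2n-1}(x)+R_{2n-1}(x),\qquad R_{2n-1}(x):=\int_{x_0}^{x}D^{(2n)}f(t)\cdot\Phi_{\Lambda_{2n-1}}(x-t)\,dt,
\]
so it suffices to show that the remainder $R_{2n-1}(x)$ tends to zero uniformly on some interval $[x_0,x_0+\delta]$. The decisive observation is that the choice $m=2n-1$ is perfectly matched to the hypothesis (\ref{eqkey}): the remainder involves $D^{(2n)}f$, which is exactly the order we control, while the fundamental function appearing in the integrand has index $2n-1$.

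Next I would insert the two available estimates. First, by hypothesis (\ref{eqkey}),
\[
|D^{(2n)}f(t)|\leq C\cdot (2n)!\cdot\sigma^{2n}
\]
for $t\in[x_0,x_0+\gamma]$. Second, fix any $\varepsilon>0$. Theorem \ref{ThmMainCon}, applied with $n$ replaced by $2n-1$, produces a constant $\alpha>0$ (independent of $n$) such that for every complex $z$
\[
(2n-1)!\,|\Phi_{\Lambda_{2n-1}}(z)|\leq e^{\alpha|z|}\left(\frac{e^{(1+\varepsilon)\beta|z|}-1}{(1+\varepsilon)\beta}\right)^{2n-1}
\]
(the case $\beta=0$ being handled identically with Theorem \ref{ThmPower2}/Proposition \ref{PropPhi}, replacing the big parenthesis by $|z|$). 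Combining these two bounds for $x\in[x_0,x_0+\delta]$ and $t\in[x_0,x]$, and using $(2n)!/(2n-1)!=2n$, yields
\[
|R_{2n-1}(x)|\leq C\,\delta\cdot 2n\cdot e^{\alpha\delta}\cdot\sigma^{2n}\left(\frac{e^{(1+\varepsilon)\beta\delta}-1}{(1+\varepsilon)\beta}\right)^{2n-1}.
\]

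To finish, I choose $\delta\in(0,\gamma)$ so small that
\[
q:=\sigma\cdot\frac{e^{(1+\varepsilon)\beta\delta}-1}{(1+\varepsilon)\beta}<1;
\]
this is possible because the right-hand side tends to $0$ as $\delta\to 0^{+}$ (if $\sigma=0$ the conclusion is trivial, and if $\beta=0$ the factor is simply $\sigma\delta$). With this choice, up to a constant the remainder bound becomes $2n\cdot q^{2n-1}$, which tends to $0$ as $n\to\infty$ uniformly in $x\in[x_0,x_0+\delta]$. Hence $s_{2n-1}\to f$ uniformly on $[x_0,x_0+\delta]$.

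The main obstacle is really just the accounting of the two factorials: on the $D^{(2n)}f$ side a $(2n)!$ appears, while Theorem \ref{ThmMainCon} only strips off a $(2n-1)!$ from $\Phi_{\Lambda_{2n-1}}$, leaving an extra linear factor $2n$ that has to be absorbed by the geometric factor $q^{2n-1}$. The alignment of the parity of $m$ in (\ref{eqtaylor}) with the even-order hypothesis (\ref{eqkey}) is what makes the estimate work without losing a factor that would prevent convergence.
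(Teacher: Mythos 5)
Your proof is correct and follows essentially the same route as the paper: the Taylor identity with $m=2n-1$, the hypothesis \eqref{eqkey} for $D^{(2n)}f$, the bound on $\Phi_{\Lambda_{2n-1}}$ from Theorem \ref{ThmMainCon}, and the choice of $\delta$ making $\sigma\bigl(e^{(1+\varepsilon)\beta\delta}-1\bigr)/\bigl((1+\varepsilon)\beta\bigr)<1$ so that the extra factor $2n$ is absorbed. Your explicit remark on the $\beta=0$ case is a small addition the paper leaves implicit.
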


\begin{proof}
Define $s_{n}=\sum_{k=0}^{n}D^{\left(  k\right)  }f\left(  x_{0}\right)
\Phi_{\Lambda_{k}}\left(  x-x_{0}\right)  .$ Then $f\left(  x\right)
=s_{n}\left(  x\right)  +R_{n}\left(  x\right)  $ by Taylor's formula
(\ref{eqtaylor}) where
\begin{equation}
R_{n}\left(  x\right)  =\int_{x_{0}}^{x}D^{\left(  n+1\right)  }f\left(
t\right)  \cdot\Phi_{\Lambda_{n}}\left(  x-t\right)  dt. \label{eqrm}%
\end{equation}
For the convergence of $s_{2n-1}\left(  x\right)  $ to $f\left(  x\right)  ,$
it suffices to show that $R_{2n-1}\left(  x\right)  \rightarrow0$ for
$n\rightarrow\infty.$ Note that the integration parameter $t$ in (\ref{eqrm})
satisfies $x_{0}\leq t\leq x,$ so we have $x-t\geq0$ and $0\leq x-t\leq
x-x_{0}.$ We shall show uniform convergence $R_{2n-1}\left(  x\right)
\rightarrow0$ for all $x\in\left[  x_{0},x_{0}+\delta\right]  $ where
$\delta>0$ will be specified later. Clearly we have $\left\vert x-t\right\vert
\leq x-x_{0}\leq x_{0}+\delta-x_{0}\leq\delta.$ By Theorem \ref{ThmMainCon},
for given $\varepsilon>0$ there exists a natural number $\alpha>0$ such that
\[
\left\vert \Phi_{\Lambda_{2n-1}}\left(  x-t\right)  \right\vert \leq
\frac{e^{\alpha\left\vert x-t\right\vert }}{\left(  2n-1\right)  !}\left(
\frac{e^{\left(  1+\varepsilon\right)  \beta\left\vert x-t\right\vert }%
-1}{\left(  1+\varepsilon\right)  \beta}\right)  ^{2n-1}\leq\frac
{e^{\delta\alpha}}{\left(  2n-1\right)  !}\left(  \frac{e^{\left(
1+\varepsilon\right)  \beta\delta}-1}{\left(  1+\varepsilon\right)  \beta
}\right)  ^{2n-1}%
\]
for all natural numbers $n.$ This in connection with (\ref{eqkey}) leads to
the estimate:
\[
\left\vert R_{2n-1}\left(  x\right)  \right\vert \leq C\left\vert
x-x_{0}\right\vert \left(  2n\right)  !\sigma^{2n}\frac{e^{\delta\alpha}%
}{\left(  2n-1\right)  !}\left(  \frac{e^{\left(  1+\varepsilon\right)
\beta\delta}-1}{\left(  1+\varepsilon\right)  \beta}\right)  ^{2n-1}.
\]
Now we make $\delta>0$ so small such that $\sigma\left(  e^{\left(
1+\varepsilon\right)  \beta\delta}-1\right)  /\left(  1+\varepsilon\right)
\beta<1.$ Then $R_{2n-1}\left(  x\right)  $ converges uniformly on $\left[
x_{0},x_{0}+\delta\right]  $ to zero.
\end{proof}

\begin{proposition}
\label{Propevenodd}Let $\lambda_{n},n\in\mathbb{N}_{0},$ be real numbers such
that $\overline{\lim}_{n\rightarrow\infty}\left\vert \lambda_{n}\right\vert
/n\leq\beta$ for some $\beta>0$. Let $f\in C^{\infty}\left[  x_{0}%
,x_{0}+\gamma\right]  $ with $\gamma>0$ and assume that there exist constants
$C>0$ and $\sigma>0$ such that
\begin{equation}
\left\vert D^{\left(  2n\right)  }f\left(  t\right)  \right\vert \leq
C\cdot\left(  2n\right)  !\sigma^{2n}\text{ for all }t\in\left[  x_{0}%
,x_{0}+\gamma\right]  . \label{eqesteven}%
\end{equation}
Then for every $\varepsilon>0$ there exist constants $C_{2}>0$ and $\delta>0$
such that
\begin{equation}
\left\vert D^{\left(  2n+1\right)  }f\left(  t\right)  \right\vert \leq
C_{2}\left(  2n+1\right)  !\left(  \sigma+\varepsilon\right)  ^{2n+1}
\label{eqestodd}%
\end{equation}
for all $t\in\left[  x_{0},x_{0}+\delta\right]  $ and for all natural numbers
$n.$
\end{proposition}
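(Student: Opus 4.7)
The plan is to bound $|D^{(2n+1)}f(t)|$ by controlling the ordinary derivative $g_n'$ of $g_n := D^{(2n)}f$ via a Landau--Kolmogorov-style trade-off, using the identity
\[
D^{(2n+1)}f = D_{\lambda_{2n}}g_n = g_n' - \lambda_{2n}\,g_n.
\]
The hypothesis gives $|g_n(t)|\le A_n := C(2n)!\,\sigma^{2n}$ on $[x_0,x_0+\gamma]$, and the growth bound $|\lambda_k|\le (1+\varepsilon_1)\beta k + C_1$ (valid for any prescribed $\varepsilon_1>0$ and suitable $C_1$) immediately yields $|\lambda_{2n}g_n(t)| = O(nA_n) = O((2n+1)!\sigma^{2n})$, already of the required order. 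So the real work is to estimate $\|g_n'\|_\infty$.

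For this I would use the elementary Taylor identity with integral remainder,
\[
g_n(t+h) = g_n(t) + g_n'(t)\,h + \int_0^h (h-u)\,g_n''(t+u)\,du,
\]
valid whenever $[t,t+h]\subset[x_0,x_0+\gamma]$, which gives $|g_n'(t)|\le 2A_n/h + (h/2)\|g_n''\|_\infty$. Minimizing in $h$ produces $\|g_n'\|_\infty \le 2\sqrt{A_n\,\|g_n''\|_\infty}$ on any proper subinterval $[x_0,x_0+\delta]$ with $\delta<\gamma$, provided the optimizer $h_n=2\sqrt{A_n/\|g_n''\|_\infty}$ fits within the interval. The bounds on $\|g_n''\|_\infty$ below will show $h_n\to 0$, so this geometric constraint is automatic for all large $n$.

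The second step is to express $g_n''$ in already controlled quantities. Applying $d/dt = D_{\lambda_j}+\lambda_j$ twice gives
\[
g_n'' = D^{(2n+2)}f + (\lambda_{2n}+\lambda_{2n+1})\,g_n' - \lambda_{2n}\lambda_{2n+1}\,g_n,
\]
hence, by the hypothesis at level $n+1$ and $|\lambda_k|=O(k)$,
\[
\|g_n''\|_\infty \le A_{n+1} + \hat P_n\,X_n + \hat Q_n\,A_n,\qquad X_n := \|g_n'\|_\infty,
\]
with $\hat P_n = O(n)$, $\hat Q_n = O(n^2)$. Feeding this into the Landau--Kolmogorov estimate yields the quadratic inequality
\[
X_n^2 \le 4A_nA_{n+1} + 4\hat P_nA_nX_n + 4\hat Q_nA_n^2.
\]
Because $A_{n+1}/A_n = O(n^2\sigma^2)$, each of the three ``free'' terms $A_nA_{n+1}$, $\hat P_n^2A_n^2$, $\hat Q_nA_n^2$ is $O(n^2A_n^2)$, so solving the quadratic gives $X_n=O(nA_n)=O((2n+1)!\,\sigma^{2n})$, and hence $|D^{(2n+1)}f(t)| \le X_n + |\lambda_{2n}|A_n = O((2n+1)!\,\sigma^{2n})$.

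The final bound is obtained by trading a power of $\sigma$: for every $\varepsilon>0$, $\sigma^{2n}\le(\sigma+\varepsilon)^{2n+1}/(\sigma+\varepsilon)$, so $|D^{(2n+1)}f(t)|\le C_2(2n+1)!(\sigma+\varepsilon)^{2n+1}$ for all $n$ sufficiently large; the finitely many small $n$ are absorbed by continuity of each $D^{(2n+1)}f$ on a compact interval, at the cost of enlarging $C_2$. The main obstacle is the circular character of the pair of bounds $\|g_n'\|\lesssim\sqrt{A_n\|g_n''\|}$ and $\|g_n''\|\lesssim A_{n+1}+\hat P_n\|g_n'\|+\hat Q_n A_n$; its resolution rests on the quantitative observation that the coupling constant $\hat P_n=O(n)$ is exactly the factor needed to pass from $(2n)!$ to $(2n+1)!$ without any growth beyond the prescribed shift $\sigma\mapsto\sigma+\varepsilon$.
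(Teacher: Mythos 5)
Your route is genuinely different from the paper's and, after two repairs, it works. The paper never passes to ordinary derivatives: it proves a Rolle/mean--value theorem for the first-order operators $D_{\lambda}$ (Theorems \ref{ThmRolle}--\ref{Thmoddderiv} and \ref{ThmApp} in the Appendix), which yields directly
\[
\left\vert D^{(2n+1)}f(x)\right\vert \leq c(\delta)\,e^{(\vert\lambda_{2n}\vert+\vert\lambda_{2n+1}\vert)\delta}\left(\max\vert D^{(2n)}f\vert+\max\vert D^{(2n+2)}f\vert\right)
\]
with no circularity; the price is the factor $e^{c\,n\delta}$, which is why the paper must shrink $\delta$ and only reaches $(\sigma+\varepsilon)^{2n+1}$. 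Your reduction to the classical Landau inequality for $g_{n}=D^{(2n)}f$ picks up only polynomial factors in $n$, so once completed it gives the stronger bound $O\bigl((2n+1)!\,\sigma^{2n+1}\bigr)$ without shrinking the interval, i.e.\ the proposition with $\varepsilon=0$ and $\delta=\gamma$.

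Two points need fixing. First, your justification of the constraint that the optimizer fit in the interval is backwards: $h_{n}=2\sqrt{A_{n}/\Vert g_{n}''\Vert}$ tends to $0$ only if $\Vert g_{n}''\Vert/A_{n}\rightarrow\infty$, and the \emph{upper} bounds you derive on $\Vert g_{n}''\Vert$ cannot show this; indeed for an exponential polynomial $f$ one has $g_{n}''\equiv 0$ for large $n$. The correct handling is the boundary case: if $h_{n}$ exceeds the available length $L$, take $h=L$, which gives $\Vert g_{n}'\Vert\leq 2A_{n}/L+(L/2)\Vert g_{n}''\Vert\leq 4A_{n}/L=O(A_{n})$, even better than needed. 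Second, the circular pair of inequalities must be closed on a \emph{single} interval: you state the Landau bound on a proper subinterval, while the identity $g_{n}''=D^{(2n+2)}f+(\lambda_{2n}+\lambda_{2n+1})g_{n}'-\lambda_{2n}\lambda_{2n+1}g_{n}$ forces $X_{n}$ to be the supremum of $g_{n}'$ over the interval on which $\Vert g_{n}''\Vert$ is taken. Work on all of $[x_{0},x_{0}+\gamma]$ throughout (using left-sided increments $g_{n}(t-h)$ near the right endpoint); since $X_{n}<\infty$ by continuity on a compact interval, the inequality $X_{n}^{2}\leq aX_{n}+b$ then legitimately yields $X_{n}\leq a+\sqrt{b}=O\bigl(nA_{n}(1+\sigma)\bigr)$, and the conclusion follows as you indicate.
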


\begin{proof}
Let $\varepsilon_{0}>0$. Then there exists $\alpha>0$ such that
\begin{equation}
\left\vert \lambda_{n}\right\vert \leq\alpha+\beta\left(  1+\varepsilon
_{0}\right)  n\text{ for all }n\in\mathbb{N}_{0}. \label{eqlambadaineq}%
\end{equation}
Let $\gamma>0$ and $\varepsilon>0$ as in the proposition. Clearly we can find
$\delta>0$ small enough so that $2\delta<\gamma,$ and
\begin{equation}
e^{2\beta\left(  1+\varepsilon_{0}\right)  \delta}\sigma<\sigma+\varepsilon.
\label{eq31}%
\end{equation}
The assumption (\ref{eqesteven}) implies the estimate
\begin{equation}
\left\vert D^{\left(  2n\right)  }f\left(  t\right)  \right\vert +\left\vert
D^{\left(  2n+2\right)  }f\left(  s\right)  \right\vert \leq C\left(
2n+2\right)  !\sigma^{2n}\left(  1+\sigma^{2}\right)  \label{eq36}%
\end{equation}
for all $s,t\in\left[  x_{0},x_{0}+2\delta\right]  .$ Theorem \ref{ThmApp} in
the appendix provides the estimate
\begin{align*}
\left\vert D^{\left(  2n+1\right)  }f\left(  x\right)  \right\vert  &
\leq2\max\left\{  \frac{2}{\delta},\delta\right\}  e^{\left(  \left\vert
\lambda_{2n}\right\vert +\left\vert \lambda_{2n+1}\right\vert \right)  \delta
}\\
&  \times\left(  \max_{t\in\left[  x_{0},x_{0}+2\delta\right]  }\left\vert
D^{\left(  2n\right)  }f\left(  t\right)  \right\vert +\max_{t\in\left[
x_{0},x_{0}+2\delta\right]  }\left\vert D^{\left(  2n+2\right)  }f\left(
t\right)  \right\vert \right)
\end{align*}
for all $x\in\left[  x_{0},x_{0}+\delta\right]  .$ Now (\ref{eqlambadaineq})
and (\ref{eq36}) imply that
\[
\left\vert D^{\left(  2n+1\right)  }f\left(  x\right)  \right\vert \leq
2\max\left\{  \frac{2}{\delta},\delta\right\}  Ce^{2\alpha\delta+\beta\left(
1+\varepsilon_{0}\right)  \delta+4\beta\left(  1+\varepsilon_{0}\right)
n\delta}\left(  2n+2\right)  !\sigma^{2n}\left(  1+\sigma^{2}\right)  \
\]
for all $x\in\left[  x_{0},x_{0}+\delta\right]  $ and for all $n\in
\mathbb{N}_{0}.$ The statement is now obvious since (\ref{eq31}) implies that
\[
\left(  2n+2\right)  e^{4\beta\left(  1+\varepsilon_{0}\right)  n\delta}%
\sigma^{2n}\leq A\left(  \sigma+\varepsilon\right)  ^{2n}%
\]
for a suitable constant $A$ and for all $n\in\mathbb{N}_{0}.$
\end{proof}

The next theorem is the main result of this subsection:

\begin{theorem}
\label{ThmTaylor2}Let $\lambda_{n},n\in\mathbb{N}_{0},$ be real numbers with
the property that $\overline{\lim}_{n\rightarrow\infty}\left\vert \lambda
_{n}\right\vert /n\leq\beta$ for some $\beta>0$. Let $f\in C^{\infty}\left[
x_{0},x_{0}+\gamma\right]  $ with $\gamma>0$ and assume that there exist
constants $C>0$ and $\sigma>0$ such that
\[
\left\vert D^{\left(  2n\right)  }f\left(  t\right)  \right\vert \leq
C\cdot\left(  2n\right)  !\sigma^{2n}%
\]
for all $t\in\left[  x_{0},x_{0}+\gamma\right]  $ and $n\in\mathbb{N}_{0}.$
Then the series
\[
\sum_{n=0}^{\infty}D^{\left(  n\right)  }f\left(  x_{0}\right)  \Phi
_{\Lambda_{n}}\left(  z-x_{0}\right)
\]
defines an analytic extension of $f$ and it converges compactly and absolutely
in the dics in $\mathbb{C}$ with center $x_{0}$ and radius
\[
\frac{1}{\beta}\ln\left(  1+\frac{\beta}{\sigma}\right)  .
\]

\end{theorem}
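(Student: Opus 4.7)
The plan is to combine three earlier results: Proposition \ref{Propevenodd} (to upgrade the even-order hypothesis into control of all $D^{(n)}f(x_0)$), Theorem \ref{ThmPower} (to turn the resulting coefficient estimates into a radius of convergence for the generalized Taylor series), and Theorem \ref{ThmTaylor1} (to identify the sum with $f$ on a real subinterval).

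First, I would apply Proposition \ref{Propevenodd}: for any $\varepsilon>0$ there exist $C_2>0$ and $\delta_0>0$ such that $|D^{(2n+1)}f(t)|\le C_2(2n+1)!(\sigma+\varepsilon)^{2n+1}$ on $[x_0,x_0+\delta_0]$. Setting $a_n:=D^{(n)}f(x_0)$ and combining this with the even-order hypothesis, both $\sqrt[2n]{|a_{2n}|/(2n)!}$ and $\sqrt[2n+1]{|a_{2n+1}|/(2n+1)!}$ are eventually bounded by $\sigma+\varepsilon$; since $\varepsilon$ is arbitrary,
\[
\overline{\lim}_{n\to\infty}\sqrt[n]{|a_n|/n!}\le\sigma,
\]
so the quantity $R^*$ appearing in Theorem \ref{ThmPower} satisfies $R^*\ge1/\sigma$.

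Next, I would apply Theorem \ref{ThmPower} directly to the sequence $(a_n)$ and the exponents $\lambda_n$. Since $\overline{\lim}|\lambda_n|/n\le\beta$, that theorem shows that the series $\sum_n a_n\Phi_{\Lambda_n}(z-x_0)$ converges compactly and absolutely on the disc centered at $x_0$ of radius $\tfrac{1}{\beta}\ln(1+\beta R^*)$. By monotonicity of $r\mapsto\tfrac{1}{\beta}\ln(1+\beta r)$ and the bound $R^*\ge1/\sigma$, this disc contains the one of radius $\tfrac{1}{\beta}\ln(1+\beta/\sigma)$. Denote the sum by $F$; since each $\Phi_{\Lambda_n}$ is entire and the convergence is compact, $F$ is analytic on this disc.

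Finally, Theorem \ref{ThmTaylor1} applies under the present hypotheses and yields some $\delta_1>0$ such that the partial sums $s_{2n-1}(x)$ converge uniformly to $f(x)$ on $[x_0,x_0+\delta_1]$. Because the whole series converges there as well (by the previous step), passing to the limit of both the even and odd partial sums shows that $F(x)=f(x)$ on $[x_0,x_0+\delta_1]$, so $F$ is indeed an analytic extension of $f$. There is no substantial obstacle: the heavy lifting has been done in Proposition \ref{Propevenodd} (promoting control on even derivatives to all derivatives) and Theorem \ref{ThmPower} (the generalized convergence radius formula), and the present theorem is essentially a clean synthesis of these ingredients.
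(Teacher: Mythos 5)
Your proposal is correct and follows essentially the same route as the paper: Proposition \ref{Propevenodd} to promote the even-order bounds to estimates $\left\vert D^{\left(n\right)}f\left(x_{0}\right)\right\vert \leq C_{2}\,n!\left(\sigma+\varepsilon\right)^{n}$, Theorem \ref{ThmPower} to obtain compact and absolute convergence on the disc of radius $\frac{1}{\beta}\ln\left(1+\beta/\sigma\right)$, and Theorem \ref{ThmTaylor1} to identify the sum with $f$ on a real subinterval. Your explicit remark that convergence of the full series together with $s_{2n-1}\rightarrow f$ forces the limit to equal $f$ is a small point the paper leaves implicit, but the argument is the same.
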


\begin{proof}
By Proposition \ref{Propevenodd}, for each $\varepsilon>0$ the estimate
$\left\vert D^{\left(  n\right)  }f\left(  x_{0}\right)  \right\vert \leq
C_{2}n!\left(  \sigma+\varepsilon\right)  ^{n}$ holds for all natural numbers
$n$. Thus
\[
\overline{\lim_{n\rightarrow\infty}}\sqrt[n]{\frac{\left\vert D^{\left(
n\right)  }f\left(  x_{0}\right)  \right\vert }{n!}}\leq\sigma+\varepsilon.
\]
Now let $\varepsilon$ go to $0.$ By Theorem \ref{ThmPower}, $\sum
_{n=0}^{\infty}D^{\left(  n\right)  }f\left(  x_{0}\right)  \Phi_{\Lambda_{n}%
}\left(  z-x_{0}\right)  $ converges for all $z$ as stated in the theorem. By
Theorem \ref{ThmTaylor1}, the series represents the function $f.$
\end{proof}

\section{Analytic extensions of Fourier-Laplace coefficients}

Let us recall that $f_{k,l}\left(  r\right)  $ is the Fourier-Laplace
coefficient of the function $f\in C\left(  A\left(  r_{0},r_{1}\right)
\right)  $ defined for all values $r\in\left(  r_{0},r_{1}\right)  ,$ cf.
formula (\ref{LFK}). Using the transformation $r=e^{v}$ with $v\in\left(  \log
r_{0},\log r_{1}\right)  $ we can define a function
\[
\widetilde{f}_{k,l}\left(  v\right)  :=f_{k,l}\left(  e^{v}\right)  .
\]
Let us look at a simple example:

\begin{example}
Let $f\left(  x\right)  =\log\left\vert x\right\vert $ be defined on the
annular region $\mathbb{R}^{d}\setminus\left\{  0\right\}  .$ Recalling that
$Y_{0,1}\left(  \theta\right)  =1/\sqrt{\omega_{d-1}},$ the Fourier-Laplace
coefficient $f_{0,1}$ defined in (\ref{LFK}) satisfies $f_{0,1}\left(
r\right)  =\sqrt{\omega_{d-1}}\log r$. Thus $f_{0,1}$ has an analytic
extension to the cutted complex plane $\mathbb{C}\setminus\left(
-\infty,0\right]  $ and $\widetilde{f}_{0,1}\left(  v\right)  =\sqrt
{\omega_{d-1}}\log e^{v}=\sqrt{\omega_{d-1}}v$ is defined for every complex
number $v\in\mathbb{C}.$
\end{example}

The next observation is very useful: the differential operator $L_{k}^{p}$
defined in (\ref{deflk}) in Section 2 can be transformed to a linear
differential operator with \emph{constant coefficients} in the variable $v$
for $r=e^{v}.$ We cite the following theorem (\cite[Theorem $10.34$]{Koun00}):

\begin{theorem}
\label{TMk}Let $0\leq r_{0}<r_{1}\leq\infty$ and let $g:\left(  r_{0}%
,r_{1}\right)  \rightarrow\mathbb{C}$ be a $C^{\infty}$-function. Define
\[
\widetilde{g}:\left(  \log r_{0},\log r_{1}\right)  \rightarrow\mathbb{C}%
,\qquad\text{ }\widetilde{g}\left(  v\right)  :=g\left(  e^{v}\right)  .
\]
Then
\[
\left[  L_{k}^{p}\left(  g\right)  \right]  \left(  e^{v}\right)
=e^{-2pv}\left[  M_{k,p}\left(  \widetilde{g}\right)  \right]  \left(
v\right)
\]
for any $v\in\left(  \log r_{0},\log r_{1}\right)  ,$ where
\begin{equation}
M_{k,p}=\prod\limits_{j=0}^{p-1}\left(  \frac{d}{dv}-\left(  k+2j\right)
\right)  \prod\limits_{j=0}^{p-1}\left(  \frac{d}{dv}-\left(
-k-d+2+2j\right)  \right)  . \label{Mk}%
\end{equation}

\end{theorem}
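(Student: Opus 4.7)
I will use the Euler substitution $r=e^{v}$. Setting $D:=d/dv$ and $E:=r\,d/dr$, the chain rule gives $D\widetilde{g}(v)=(Eg)(e^{v})$, and by induction $D^{m}\widetilde{g}(v)=(E^{m}g)(e^{v})$ for all $m$. Since $E^{2}=r\,d/dr+r^{2}\,d^{2}/dr^{2}$, we get $r^{2}\,d^{2}/dr^{2}=E(E-1)$, so multiplying $L_{k}$ by $r^{2}$ yields
\[
r^{2}L_{k}=E(E-1)+(d-1)E-k(k+d-2)=E^{2}+(d-2)E-k(k+d-2).
\]
The right-hand side factors as $(E-k)\bigl(E-(-k-d+2)\bigr)$, since the roots of $\lambda^{2}+(d-2)\lambda-k(k+d-2)=0$ are $k$ and $-k-d+2$. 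Evaluating at $r=e^{v}$ and translating $E$ to $D$ gives
\[
(L_{k}g)(e^{v})=e^{-2v}(D-k)\bigl(D-(-k-d+2)\bigr)\widetilde{g}(v),
\]
which is the case $p=1$ of the theorem.

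Next I proceed by induction on $p$. Assuming the identity for $p$, I apply $L_{k}$ once more and use the $p=1$ formula together with the inductive hypothesis:
\[
(L_{k}^{p+1}g)(e^{v})=e^{-2v}(D-k)\bigl(D-(-k-d+2)\bigr)\Bigl(e^{-2pv}M_{k,p}\widetilde{g}(v)\Bigr).
\]
The key computation is the commutation rule for pulling the factor $e^{-2pv}$ past a constant-coefficient operator: a direct calculation shows
\[
(D-\lambda)\bigl(e^{-2pv}f(v)\bigr)=e^{-2pv}\bigl(D-(\lambda+2p)\bigr)f(v),
\]
so iterating twice,
\[
(D-k)\bigl(D-(-k-d+2)\bigr)e^{-2pv}=e^{-2pv}\bigl(D-(k+2p)\bigr)\bigl(D-(-k-d+2+2p)\bigr).
\]
Substituting this back produces the extra factor $e^{-2(p+1)v}$ in front, together with the two new operator factors $(D-(k+2p))$ and $(D-(-k-d+2+2p))$ multiplying $M_{k,p}$; these are precisely the $j=p$ terms that extend $M_{k,p}$ to $M_{k,p+1}$. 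Since constant-coefficient operators $D-a$ commute pairwise, the resulting product equals $M_{k,p+1}$, completing the induction.

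The only real obstacle is bookkeeping with the shift $\lambda\mapsto\lambda+2p$ arising from the commutation rule; once that is verified, the induction step mechanically matches the two new factors to the $j=p$ entries in the definition of $M_{k,p+1}$. Nothing else is needed beyond the chain rule, the factorization of $r^{2}L_{k}$ in terms of the Euler operator $E$, and the commutativity of constant-coefficient differential operators.
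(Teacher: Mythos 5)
Your argument is correct and complete: the reduction $r^{2}L_{k}=E(E-1)+(d-1)E-k(k+d-2)=(E-k)\bigl(E-(-k-d+2)\bigr)$ with the Euler operator $E=r\,d/dr$, the transfer $(E^{m}g)(e^{v})=D^{m}\widetilde{g}(v)$, and the commutation rule $(D-\lambda)\bigl(e^{-2pv}f\bigr)=e^{-2pv}\bigl(D-(\lambda+2p)\bigr)f$ together give exactly the factors $(D-(k+2p))$ and $(D-(-k-d+2+2p))$ needed to pass from $M_{k,p}$ to $M_{k,p+1}$. The paper itself does not prove this statement (it is quoted from \cite[Theorem 10.34]{Koun00}), so there is no internal proof to compare against; your Euler-substitution-plus-induction argument is the standard one and fills that gap self-containedly.
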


For given $k\in\mathbb{N}_{0}$ and dimension $d,$ let us define the exponents
\begin{equation}
\lambda_{2j}\left(  k,d\right)  =k+2j\text{ and }\lambda_{2j+1}\left(
k,d\right)  =-k-d+2+2j\text{ for }j\in\mathbb{N}_{0}. \label{eqlambda}%
\end{equation}
For notational simplicity we will often suppress the dependence on $k$ and $d$
and we simply write $\lambda_{n}$ with $n\in\mathbb{N}_{0}.$ In accordance
with the notations in Section $3,$ we shall define
\begin{align*}
\Phi_{n}\left(  v\right)   &  :=\Phi_{\left(  \lambda_{0},\ldots,\lambda
_{n}\right)  }\left(  v\right)  ,\\
D^{\left(  n\right)  }g\left(  v\right)   &  :=\left(  \frac{d}{dv}%
-\lambda_{0}\right)  \cdots\left(  \frac{d}{dv}-\lambda_{n-1}\right)  g\left(
v\right)  .
\end{align*}
Now we will prove the following result:

\begin{theorem}
\label{ThmGutest}Let $f:A\left(  r_{0},r_{1}\right)  \rightarrow\mathbb{C}$ be
polyharmonic of infinite order and type $\tau\geq0$ and define $\widetilde
{f}_{k,l}\left(  v\right)  :=f_{k,l}\left(  e^{v}\right)  $ for $v\in\left(
\log r_{0},\log r_{1}\right)  $. Then, given $v_{0}\in$ $\left(  \log
r_{0},\log r_{1}\right)  $ and $\varepsilon>0,$ there exists a constant $C>0$
such that
\begin{equation}
\left\vert D^{\left(  n\right)  }\widetilde{f}_{k,l}\left(  v_{0}\right)
\right\vert \leq C\cdot n!\cdot\left[  e^{v_{0}}\left(  \tau+\varepsilon
\right)  \right]  ^{n} \label{estest}%
\end{equation}
for all $n\in\mathbb{N}_{0}$ and for all $k\in\mathbb{N}_{0},l=1,\ldots
,a_{k}.$
\end{theorem}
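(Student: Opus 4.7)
The plan splits on the parity of $n$. For $n = 2p$, I note that the list of exponents $(\lambda_0, \ldots, \lambda_{2p-1})$ from \eqref{eqlambda} is precisely the multiset of roots in the commuting factorization of $M_{k,p}$ from \eqref{Mk}, so the two operators coincide: $D^{(2p)} = M_{k,p}$. Theorem \ref{TMk} therefore yields
\[
D^{(2p)}\widetilde{f}_{k,l}(v) = e^{2pv}\,L_k^p f_{k,l}(e^v),
\]
and Theorem \ref{ThmLkpest} applied on a compact subinterval of $(\log r_0,\log r_1)$ containing $v_0$ delivers the even-order bound $|D^{(2p)}\widetilde{f}_{k,l}(v_0)| \leq C_1\,(2p)!\,[e^{v_0}(\tau+\varepsilon)]^{2p}$ uniformly in $k,l$.

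For $n = 2p+1$, I write $D^{(2p+1)} = (d/dv - (k+2p))\,D^{(2p)}$ and differentiate the preceding formula to obtain
\[
D^{(2p+1)}\widetilde{f}_{k,l}(v_0) = e^{2pv_0}\bigl[e^{v_0}(L_k^p f_{k,l})'(e^{v_0}) - k\,L_k^p f_{k,l}(e^{v_0})\bigr].
\]
The first term is $e^{v_0}$ times the $(k,l)$-Fourier--Laplace coefficient of the radial derivative $\partial_r\Delta^p f$; the second carries an explicit factor $k$ that threatens uniformity in $k$. To control both terms simultaneously I invoke Parseval on the sphere, together with the eigenvalue identity $-\Delta_{\mathbb{S}^{d-1}}Y_{k,l} = k(k+d-2)\,Y_{k,l}$. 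Integration by parts then gives
\[
k(k+d-2)\,|L_k^p f_{k,l}(r)|^2 \;\leq\; r^2\int_{\mathbb{S}^{d-1}}|\nabla_{\mathbb{S}^{d-1}}\Delta^p f(r\theta)|^2\,d\theta \;\leq\; r^2\,\omega_{d-1}\,\max_{|x|=r}|\nabla\Delta^p f(x)|^2,
\]
whence $k\,|L_k^p f_{k,l}(r)| \leq r\sqrt{\omega_{d-1}}\,\max_{|x|=r}|\nabla\Delta^p f|$ for $k\geq 1$ (trivially zero at $k=0$); an analogous Parseval step for $\partial_r\Delta^p f$ yields $|(L_k^p f_{k,l})'(r)| \leq \sqrt{\omega_{d-1}}\,\max_{|x|=r}|\nabla\Delta^p f|$.

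What remains is to estimate $|\nabla\Delta^p f(x)| = |\Delta^p(\partial_i f)(x)|$ on the compact $\{|x|=e^{v_0}\}$. Here I rely on the standard fact (see, e.g., Avanissian) that polyharmonicity of infinite order of type $\tau$ is preserved under partial differentiation, which gives $|\nabla\Delta^p f(x)| \leq C'\,(2p)!\,(\tau+\varepsilon)^{2p}$ on a slightly smaller compact. Substituting, $|D^{(2p+1)}\widetilde{f}_{k,l}(v_0)| \leq 2\sqrt{\omega_{d-1}}\,C'\,e^{v_0}\,(2p)!\,[e^{v_0}(\tau+\varepsilon)]^{2p}$, which is of the required form $C\,(2p+1)!\,[e^{v_0}(\tau+\varepsilon)]^{2p+1}$ after absorbing the trivial factor $(2p+1)(\tau+\varepsilon) \geq \tau+\varepsilon$. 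The main obstacle is precisely the uniformity in $k$: a direct application of Proposition \ref{Propevenodd} would produce a constant proportional to $e^{2\alpha\delta}$ with $\alpha$ growing linearly in $k$, which cannot be absorbed, and it is the Parseval-plus-spherical-Laplacian argument that bypasses this dependence.
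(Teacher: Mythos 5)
Your proposal is correct, and for the odd-order case it takes a genuinely different route from the paper. The even-order case ($D^{(2p)}=M_{k,p}$, Theorem \ref{TMk} plus Theorem \ref{ThmLkpest}) coincides with the paper's argument. For odd orders the paper instead invokes Proposition \ref{Propevenodd}, whose proof rests on the Rolle-type mean value machinery of the Appendix (Theorems \ref{Thmoddderiv} and \ref{ThmApp}): odd derivatives are interpolated between consecutive even ones purely at the level of the one-dimensional operators $D_{\lambda}$. That argument is more general and self-contained, but — exactly as you observe — the constant it produces involves $e^{2\alpha\delta}$ with $\alpha$ chosen so that $\left\vert \lambda_{n}\right\vert \leq\alpha+\beta\left(  1+\varepsilon_{0}\right)  n$; since $\lambda_{0}\left(  k\right)  =k$ this forces $\alpha\geq k$, so the paper's proof as written does not literally deliver the $k$-uniform constant asserted in the statement (one can patch it by letting $\delta$ shrink like $1/k$, but that still leaves a constant growing polynomially in $k$). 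Your route avoids this entirely by exploiting the specific structure: $D^{\left(  2p+1\right)  }$ is one explicit first-order factor applied to $D^{\left(  2p\right)  }=M_{k,p}$, the resulting expression $e^{2pv_{0}}\bigl[e^{v_{0}}(L_{k}^{p}f_{k,l})'(e^{v_{0}})-k\,L_{k}^{p}f_{k,l}(e^{v_{0}})\bigr]$ is computed exactly, and the dangerous factor $k$ is absorbed via the eigenvalue identity for $\Delta_{\mathbb{S}^{d-1}}$ together with Parseval, using $k\leq\sqrt{k\left(  k+d-2\right)  }$ for $d\geq2$. I checked the two Parseval estimates and the final absorption of $\left(  2p+1\right)  \left(  \tau+\varepsilon\right)  $; they are correct. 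What your approach buys is genuine uniformity in $k$; what it costs is the appeal to the fact that polyharmonicity of infinite order and type $\tau$ is preserved under first-order differentiation, which you cite to Avanissian without proof. That fact is true and is the one step you should spell out: it follows from the standard interior estimate $\left\vert \nabla u\left(  x\right)  \right\vert \leq c\rho^{-1}\sup_{B\left(  x,\rho\right)  }\left\vert u\right\vert +c\rho\sup_{B\left(  x,\rho\right)  }\left\vert \Delta u\right\vert $ applied to $u=\Delta^{p}f$, with the resulting factor $\left(  2p+1\right)  \left(  2p+2\right)  $ absorbed into $\left(  \tau+\varepsilon'\right)  ^{2p}$ for a slightly larger $\varepsilon'$ — morally the same interpolation idea as the paper's Appendix, but for $\Delta$ rather than for the operators $D_{\lambda}$. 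With that lemma made explicit, your proof is complete and in fact sharper than the paper's on the point of $k$-uniformity.
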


\begin{proof}
Clearly, $\lambda_{n},n\in\mathbb{N}_{0},$ defined as above, are real numbers
with the property that $\overline{\lim}_{n\rightarrow\infty}\left\vert
\lambda_{n}\right\vert /n=1.$ Let $v\in\left(  \log r_{0},\log r_{1}\right)  $
and $\varepsilon>0.$ We want to apply Proposition \ref{Propevenodd} for
$\beta=1$ and to the function $\widetilde{f}_{k,l}$ and $\sigma=e^{v_{0}%
}\left(  \tau+\varepsilon\right)  $. Thus we want to show that the following
estimate holds: There exists $\gamma>0$ and $C>0$ (independent of $k,l)$ such
that
\[
\left\vert D^{\left(  2p\right)  }\widetilde{f}_{k,l}\left(  v\right)
\right\vert \leq C\cdot\left(  2p\right)  !\left[  e^{v_{0}}\left(
\tau+\varepsilon\right)  \right]  ^{2p}%
\]
for all $v\in\left[  v_{0},v_{0}+\gamma\right]  $ and $p\in\mathbb{N}_{0}.$
Theorem \ref{TMk} shows that
\begin{equation}
D^{\left(  2p\right)  }\widetilde{f}_{k,l}\left(  v\right)  =M_{k,p}\left(
\widetilde{f}_{k,l}\right)  \left(  v\right)  =e^{2pv}L_{k}^{p}\left(
f_{k,l}\right)  \left(  e^{v}\right)  \label{eqstar}%
\end{equation}
for all $v\in\left(  \log r_{0},\log r_{1}\right)  .$ Let us take $\gamma>0$
and $\varepsilon_{1}>0$ small enough so that $e^{v_{0}+\gamma}\left(
\tau+\varepsilon_{1}\right)  <e^{v_{0}}\left(  \tau+\varepsilon\right)  $ and
$v_{0}+\gamma<\log r_{1}.$ Theorem \ref{ThmLkpest} shows that, for
$\varepsilon_{1}>0$ and for the subinterval $\left[  v_{0},v_{0}%
+\gamma\right]  \subset\left(  \log r_{0},\log r_{1}\right)  ,$ there is
positive number $C$ such that
\[
\left\vert L_{k}^{p}\left(  f_{k,l}\right)  \left(  e^{v}\right)  \right\vert
\leq C\left(  2p\right)  !\left(  \tau+\varepsilon_{1}\right)  ^{2p}\text{ }%
\]
for all $v\in\left[  v_{0},v_{0}+\gamma\right]  $ and all $p\in\mathbb{N}%
_{0},k\in\mathbb{N}_{0},l=1,\ldots,a_{k}.$ Since $e^{v}\leq e^{v_{0}+\gamma},$
we obtain from (\ref{eqstar}) the estimate
\begin{align*}
\left\vert D^{\left(  2p\right)  }\widetilde{f}_{k,l}\left(  v\right)
\right\vert  &  \leq e^{2pv}\left\vert L_{k}^{p}\left(  f_{k,l}\right)
\left(  e^{v}\right)  \right\vert \leq Ce^{2p\left(  v_{0}+\gamma\right)
}\left(  2p\right)  !\left(  \tau+\varepsilon_{1}\right)  ^{2p}\\
&  \leq C\left(  2p\right)  !\left[  e^{v_{0}}\left(  \tau+\varepsilon\right)
\right]  ^{2p}.
\end{align*}
Thus the assumptions of Proposition \ref{Propevenodd} are satisfied and the
theorem is proven.
\end{proof}

Here is the main result about the analytical extension in the present section.

\begin{theorem}
\label{ThmMain}Let $f:A\left(  r_{0},r_{1}\right)  \rightarrow\mathbb{C}$ be
polyharmonic of infinite order and type $\tau\geq0$ and define $\widetilde
{f}_{k,l}\left(  v\right)  :=f_{k,l}\left(  e^{v}\right)  $ for $v\in\left(
\log r_{0},\log r_{1}\right)  $. Then, given $v_{0}\in$ $\left(  \log
r_{0},\log r_{1}\right)  ,$ the series
\[
\sum_{n=0}^{\infty}D^{\left(  n\right)  }\widetilde{f}_{k,l}\left(
v_{0}\right)  \cdot\Phi_{n}\left(  v-v_{0}\right)
\]
defines an analytic extension of $\widetilde{f}_{k,l},$ and it converges
compactly and absolutely in the disk with center $v_{0}$ and radius
\[
\ln\left(  1+\frac{1}{e^{v_{0}}\cdot\tau}\right)  .
\]
If $f$ is polyharmonic of infinite order and type $0,$ then $\widetilde
{f}_{k,l}\left(  v\right)  $ is an entire function and the Fourier-Laplace
coefficient $f_{k,l}$ possesses an analytic extension to the cutted complex
plane $\mathbb{C}_{-}:=\mathbb{C}\setminus\left(  -\infty,0\right]  $.
\end{theorem}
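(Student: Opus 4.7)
The plan is to combine Theorem \ref{ThmGutest} (which controls the generalized derivatives $D^{(n)}\widetilde{f}_{k,l}(v_0)$) with the Taylor expansion results of Section 3 (Theorems \ref{ThmPower}, \ref{ThmTaylor1}, and \ref{ThmTaylor2}) applied to the specific sequence of exponents $\lambda_n = \lambda_n(k,d)$ from \eqref{eqlambda}. The crucial observation is that these exponents satisfy $\overline{\lim}_{n\to\infty}|\lambda_n|/n = 1$, so the growth parameter is $\beta = 1$ in the terminology of Section 3.

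First I would note that Theorem \ref{ThmGutest} applies to $\widetilde{f}_{k,l}$ on a subinterval $[v_0, v_0+\gamma]$, producing for every $\varepsilon>0$ an estimate $|D^{(2p)}\widetilde{f}_{k,l}(v)| \leq C(2p)![e^{v_0}(\tau+\varepsilon)]^{2p}$, and Proposition \ref{Propevenodd} then bridges even to odd orders so that at $v_0$ itself one has
\[
\overline{\lim_{n\to\infty}}\sqrt[n]{\frac{|D^{(n)}\widetilde{f}_{k,l}(v_0)|}{n!}} \leq e^{v_0}\tau.
\]
Applying Theorem \ref{ThmPower} with $\beta=1$ and $a_n := D^{(n)}\widetilde{f}_{k,l}(v_0)$ yields that
\[
\sum_{n=0}^{\infty} D^{(n)}\widetilde{f}_{k,l}(v_0)\,\Phi_n(v-v_0)
\]
converges compactly and absolutely on the disk of center $v_0$ and radius
\[
\frac{1}{\beta}\ln(1+\beta R^{\ast}) = \ln\!\left(1+\frac{1}{e^{v_0}\tau}\right),
\]
after letting $\varepsilon\to 0$. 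That the sum agrees with $\widetilde{f}_{k,l}$ on a real neighborhood of $v_0$ follows from Theorem \ref{ThmTaylor2} (or directly Theorem \ref{ThmTaylor1}), whose hypotheses are exactly the even-order estimate supplied inside the proof of Theorem \ref{ThmGutest}; by the identity theorem the series is the analytic continuation of $\widetilde{f}_{k,l}$.

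For the case $\tau=0$ the radius becomes $+\infty$ for every $v_0 \in (\log r_0,\log r_1)$, so the local Taylor series centered at any such $v_0$ is entire. Two such extensions agree on the common interval $(\log r_0,\log r_1) \subset \mathbb{R}$, hence by the identity theorem they glue to a single entire function, which I still denote $\widetilde{f}_{k,l}$. Finally, to obtain the extension of $f_{k,l}$ itself to $\mathbb{C}_- = \mathbb{C}\setminus(-\infty,0]$, I would use the principal branch of the logarithm, which is analytic on $\mathbb{C}_-$, and define $f_{k,l}(z) := \widetilde{f}_{k,l}(\log z)$; on the positive real axis this reduces to $\widetilde{f}_{k,l}(\log r) = f_{k,l}(r)$, giving the desired analytic extension.

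No step seems to pose a serious obstacle, since the heavy machinery has already been developed in Sections 2 and 3; the main care required is to verify that $\beta=1$ matches the exponents \eqref{eqlambda} uniformly in $k$ (which is immediate from the explicit form $\lambda_{2j}=k+2j$, $\lambda_{2j+1}=-k-d+2+2j$) and to ensure, via Proposition \ref{Propevenodd}, that the even-order estimate from Theorem \ref{ThmGutest} upgrades to a uniform Cauchy-type bound on all derivatives that feeds into Theorem \ref{ThmPower}.
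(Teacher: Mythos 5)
Your proposal is correct and follows essentially the same route as the paper: the paper's proof simply cites Theorem \ref{ThmGutest} together with Theorem \ref{ThmTaylor2} (whose own proof is exactly your unpacking via Proposition \ref{Propevenodd}, Theorem \ref{ThmPower} and Theorem \ref{ThmTaylor1}), and then composes with the principal logarithm to pass from $\widetilde{f}_{k,l}$ to $f_{k,l}$ on $\mathbb{C}\setminus(-\infty,0]$. The only difference is that you spell out the intermediate steps that the paper delegates to Theorem \ref{ThmTaylor2}.
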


\begin{proof}
Theorems \ref{ThmGutest} and \ref{ThmTaylor2} show the first statement. If $f$
is polyharmonic of infinite order and type $0,$ the convergence radius is
infinite and $\widetilde{f}_{k,l}$ is entire. Now define $g\left(  z\right)
=\widetilde{f}_{k,l}\left(  \log z\right)  $ for all $z$ in the cutted complex
plane $\mathbb{C}_{-}.$ Then for $r\in\left(  r_{0},r_{1}\right)  $ we have
\[
g\left(  r\right)  =\widetilde{f}_{k,l}\left(  \log r\right)  =f_{k,l}\left(
e^{\log r}\right)  =f_{k,l}\left(  r\right)  .
\]

\end{proof}

\section{\label{S5}Analytic extensions of Fourier-Laplace coefficients for odd
dimension}

Assume that the dimension $d$ of the underlying euclidean space is odd. Then
for any fixed $k\in\mathbb{N}_{0}$ the exponents
\begin{equation}
\lambda_{2j}\left(  k\right)  :=k+2j\text{ and }\lambda_{2j+1}\left(
k\right)  :=-k-d+2+2j \label{eqLLnew}%
\end{equation}
defined in (\ref{eqlambda}) are pairwise different and $\left|  \lambda
_{m}\left(  k\right)  -\lambda_{n}\left(  k\right)  \right|  \geq1$ for all
$m\neq n.$ Since $\lambda_{n}\left(  k\right)  $ are pairwise different, the
defining equality (\ref{defPhi}) for the fundamental function $\Phi_{n}$
implies that
\begin{equation}
\Phi_{n}\left(  v\right)  =\sum_{j=0}^{n}\frac{e^{\lambda_{j}\left(  k\right)
v}}{q_{n}^{\prime}\left(  \lambda_{j}\left(  k\right)  \right)  }\text{ where
}q_{n}^{\prime}\left(  \lambda_{j}\left(  k\right)  \right)  =\prod
_{\substack{s=0 \\s\neq j}}^{n}\left(  \lambda_{j}\left(  k\right)
-\lambda_{s}\left(  k\right)  \right)  . \label{FiPrepres}%
\end{equation}
Here the polynomial
\[
q_{n}\left(  \lambda\right)  =\prod_{j=0}^{n}\left(  \lambda-\lambda
_{j}\right)
\]
is the symbol of the linear differential operator $L$ defined in
(\ref{eqdefL}) for which the notation $L\left(  \lambda\right)  $ would be
more traditional.

If $f$ is polyharmonic of infinite order and type $\tau$ and $v_{0}\in\left(
\log r_{0},\log r_{1}\right)  $ then, according to Theorem \ref{ThmMain}, the
series
\[
f_{k,l}\left(  e^{v}\right)  =\widetilde{f}_{k,l}\left(  v\right)  =\sum
_{n=0}^{\infty}D^{\left(  n\right)  }\widetilde{f}_{k,l}\left(  v_{0}\right)
\Phi_{n}\left(  v-v_{0}\right)
\]
converges for $v$ in a neighborhood of $v_{0}.$ It follows that
\[
f_{k,l}\left(  e^{v}\right)  =\sum_{n=0}^{\infty}\sum_{j=0}^{n}D^{\left(
n\right)  }\widetilde{f}_{k,l}\left(  v_{0}\right)  \frac{e^{\lambda
_{j}\left(  k\right)  \cdot\left(  v-v_{0}\right)  }}{q_{n}^{\prime}\left(
\lambda_{j}\left(  k\right)  \right)  }.
\]
Substituting $e^{v}=r$ back we arrive at
\[
f_{k,l}\left(  r\right)  =\sum_{n=0}^{\infty}\sum_{j=0}^{n}D^{\left(
n\right)  }\widetilde{f}_{k,l}\left(  v_{0}\right)  \frac{e^{-\lambda
_{j}\left(  k\right)  v_{0}}}{q_{n}^{\prime}\left(  \lambda_{j}\left(
k\right)  \right)  }r^{\lambda_{j}\left(  k\right)  }.
\]
In the following, we want to prove that this double series converges compactly
and absolutely, even for complex values $r,$ in the punctured plane
$\mathbb{C}^{\ast}$ provided that $f$ is polyharmonic of infinite order and
type $0.$

First we need an estimate for $\left\vert q_{n}^{\prime}\left(  \lambda
_{j}\right)  \right\vert $:

\begin{proposition}
\label{Propqn}Let $\lambda_{0},\ldots,\lambda_{n}$ be real numbers such that
$\left\vert \lambda_{s}-\lambda_{t}\right\vert \geq\alpha>0$ for all
$s,t\in\left\{  0,\ldots,n\right\}  ,s\neq t.$ Then for $q_{n}\left(
z\right)  =\left(  z-\lambda_{0}\right)  \cdots\left(  z-\lambda_{n}\right)  $
we have
\[
\left\vert q_{n}^{\prime}\left(  \lambda_{j}\right)  \right\vert
=\lim_{z\rightarrow\lambda_{j}}\left\vert \frac{q_{n}\left(  z\right)
}{z-\lambda_{j}}\right\vert \geq\frac{\alpha^{n}n!}{2^{n}}\text{ for all
}j=0,\ldots,n.
\]

\end{proposition}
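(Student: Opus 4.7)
The plan is straightforward bookkeeping once one recognises that the $\lambda$'s can be relabelled in increasing order. Since $q_n(z) = \prod_{s=0}^n (z-\lambda_s)$, differentiating and evaluating at $\lambda_j$ gives
\[
q_n'(\lambda_j) \;=\; \prod_{s\neq j}(\lambda_j-\lambda_s),
\]
so
\[
|q_n'(\lambda_j)| \;=\; \prod_{s\neq j}|\lambda_j-\lambda_s|,
\]
and this is the quantity to be bounded from below.

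First I would reorder: since the $\lambda_s$ are real and pairwise at distance at least $\alpha$, there is a permutation $\pi$ of $\{0,\ldots,n\}$ such that $\mu_i := \lambda_{\pi(i)}$ satisfies $\mu_0 < \mu_1 < \cdots < \mu_n$ and $\mu_{i+1}-\mu_i \geq \alpha$. Let $k$ be the (unique) index with $\mu_k = \lambda_j$. Then for every $i \neq k$,
\[
|\lambda_j - \mu_i| \;=\; |\mu_k-\mu_i| \;\geq\; \alpha\,|k-i|,
\]
because consecutive sorted gaps are each at least $\alpha$. Taking the product over $i \neq k$ rearranges the product $\prod_{s\neq j}|\lambda_j-\lambda_s|$ and yields
\[
|q_n'(\lambda_j)| \;\geq\; \alpha^n \prod_{\substack{i=0\\ i\neq k}}^{n}|k-i| \;=\; \alpha^n\, k!\,(n-k)!.
\]

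Finally I would eliminate the dependence on $k$ via the binomial identity $k!(n-k)! = n!/\binom{n}{k}$ together with the elementary estimate $\binom{n}{k} \leq \sum_{i=0}^{n}\binom{n}{i} = 2^n$, giving
\[
k!\,(n-k)! \;\geq\; \frac{n!}{2^n}.
\]
Combining this with the previous display produces the desired inequality
\[
|q_n'(\lambda_j)| \;\geq\; \frac{\alpha^n\, n!}{2^n}.
\]
There is no genuine obstacle here; the only point requiring attention is the reduction to the sorted configuration, which exploits the fact that the separation hypothesis is strong enough to bound the distance between non-adjacent points by multiples of $\alpha$, making the worst case the equidistant arrangement $\mu_i = \mu_0 + i\alpha$.
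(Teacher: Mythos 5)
Your proof is correct and follows essentially the same route as the paper: sort the exponents, use the telescoping bound $|\mu_k-\mu_i|\geq\alpha|k-i|$ to get $|q_n'(\lambda_j)|\geq\alpha^n k!(n-k)!$, and then apply $\binom{n}{k}\leq 2^n$. The only cosmetic difference is that you make the reordering explicit via a permutation, while the paper simply assumes $\lambda_0<\cdots<\lambda_n$ without loss of generality.
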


\begin{proof}
We may assume that $\lambda_{0}<\cdots<\lambda_{n}.$ Then
\[
\lim_{z\rightarrow\lambda_{j}}\frac{q_{n}\left(  z\right)  }{\left(
z-\lambda_{j}\right)  }=\left(  \lambda_{j}-\lambda_{0}\right)  \cdots\left(
\lambda_{j}-\lambda_{j-1}\right)  \left(  \lambda_{j}-\lambda_{j+1}\right)
\cdots\left(  \lambda_{j}-\lambda_{n}\right)  .
\]
Using $\lambda_{0}<\cdots<\lambda_{n}$ we obtain an estimate for
$\lambda_{k+l}-\lambda_{k}$ as
\[
\lambda_{k+l}-\lambda_{k}=\lambda_{k+l}-\lambda_{k+l-1}+\lambda_{k+l-1}%
-\lambda_{k+l-2}+\lambda_{k+l+2}-\cdots+\lambda_{k+1}-\lambda_{k}\geq
l\cdot\alpha.
\]
Finally we obtain
\[
\lim_{z\rightarrow\lambda_{j}}\left\vert \frac{q_{n}\left(  z\right)
}{\left(  z-\lambda_{j}\right)  }\right\vert \geq\alpha^{n}j!\left(
n-j\right)  !=\frac{\alpha^{n}n!}{\binom{n}{j}}\geq\alpha^{n}n!\frac{1}{2^{n}%
}.
\]

\end{proof}

The next result strengthens Theorem \ref{ThmMain} for odd dimension $d>1$. For
example, in the case that $f$ is polyharmonic of infinite order and type $0,$
it follows that the Fourier-Laplace coefficients possess an analytic extension
to the punctured plane $\mathbb{C}^{\ast}$ instead of the cutted complex plane
$\mathbb{C}\setminus\left(  -\infty,0\right]  .$ In Theorem \ref{ThmCoeff}
below we shall give an explicit representation of the Fourier-Laplace
coefficients giving a proof of formula (\ref{ENEW}) mentioned in the introduction.

\begin{theorem}
\label{ThmMain1}Let $d>1$ be odd and $\lambda_{j}\left(  k\right)  $ as in
(\ref{eqLLnew}). Let $f:A\left(  r_{0},r_{1}\right)  \rightarrow\mathbb{C}$ be
polyharmonic of infinite order and type $\tau<1/2r_{0}.$ Then for any $v_{0}$
with $r_{0}<e^{v_{0}}<\min\left\{  r_{1},1/2\tau\right\}  ,$ the series
\begin{equation}
F_{k,l}\left(  z\right)  :=\sum_{n=0}^{\infty}\sum_{j=0}^{n}D^{\left(
n\right)  }\widetilde{f}_{k,l}\left(  v_{0}\right)  \frac{e^{-\lambda
_{j}\left(  k\right)  v_{0}}}{q_{n}^{\prime}\left(  \lambda_{j}\left(
k\right)  \right)  }z^{\lambda_{j}\left(  k\right)  } \label{eqseriesF}%
\end{equation}
converges compactly and absolutely in the annulus $\left\{  z\in
\mathbb{C};0<\left\vert z\right\vert <1/2\tau\right\}  $ and $F_{k,l}\left(
r\right)  =f_{k,l}\left(  r\right)  $ for all $r\in\left(  r_{0},\min\left\{
r_{1},1/2\tau\right\}  \right)  $.
\end{theorem}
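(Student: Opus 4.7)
The plan is to combine Theorem~\ref{ThmMain} with the explicit decomposition (\ref{FiPrepres}) of the fundamental function. Since $d$ is odd, the exponents $\lambda_j(k)$ in (\ref{eqLLnew}) are pairwise distinct integers with $|\lambda_s(k)-\lambda_t(k)|\geq 1$, so (\ref{FiPrepres}) gives
\[
\Phi_n(v-v_0)=\sum_{j=0}^n \frac{e^{\lambda_j(k)(v-v_0)}}{q_n'(\lambda_j(k))}.
\]
Substituting $z=e^v$ term-by-term converts the single Taylor series of Theorem~\ref{ThmMain} into the iterated double series (\ref{eqseriesF}) defining $F_{k,l}$. Because each $\lambda_j(k)\in\mathbb{Z}$, the powers $z^{\lambda_j(k)}$ are single-valued on $\mathbb{C}^\ast$, so $F_{k,l}$ is a well-defined candidate on the punctured disk wherever its series converges.

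The first task is to prove absolute and compact convergence of (\ref{eqseriesF}) on $\{0<|z|<1/2\tau\}$. Pick $\varepsilon>0$ small enough that $e^{v_0}(\tau+\varepsilon)<1/2$. Theorem~\ref{ThmGutest} yields $C>0$ with $|D^{(n)}\widetilde{f}_{k,l}(v_0)|\leq C\,n!\,(e^{v_0}(\tau+\varepsilon))^n$, while Proposition~\ref{Propqn}, applied with minimum separation $\alpha=1$, gives $|q_n'(\lambda_j(k))|\geq n!/2^n$. Writing $\xi:=|z|/e^{v_0}$, each summand of (\ref{eqseriesF}) is therefore dominated in modulus by $C\,(2e^{v_0}(\tau+\varepsilon))^n\,\xi^{\lambda_j(k)}$. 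Splitting the inner sum into its even part ($j=2m$, exponents $k+2m$) and odd part ($j=2m+1$, exponents $-k-d+2+2m$), each a partial geometric series in $\xi^2$, gives
\[
\sum_{j=0}^{n}\xi^{\lambda_j(k)}\leq (n+1)\bigl(\xi^{k}+\xi^{-k-d+2}\bigr)\max(1,\xi^{n}).
\]
For $|z|\leq e^{v_0}$ (so $\xi\leq 1$, $\max(1,\xi^n)=1$) the resulting outer series is controlled by $\sum_n (n+1)(2e^{v_0}(\tau+\varepsilon))^n$, which converges since $e^{v_0}(\tau+\varepsilon)<1/2$. For $|z|>e^{v_0}$ (so $\max(1,\xi^n)=\xi^n$) it is controlled by $\sum_n (n+1)(2(\tau+\varepsilon)|z|)^n$, which converges provided $|z|<1/(2(\tau+\varepsilon))$. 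Letting $\varepsilon\downarrow 0$, and noting that the bounds are uniform on compact subsets of the annulus (where $\xi$ stays in a fixed compact subinterval of $(0,\infty)$), yields absolute and compact convergence on the full annulus.

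It remains to identify $F_{k,l}(r)$ with $f_{k,l}(r)$ on the real interval. By Theorem~\ref{ThmMain} the single series $\sum_n D^{(n)}\widetilde{f}_{k,l}(v_0)\Phi_n(v-v_0)$ equals $\widetilde{f}_{k,l}(v)=f_{k,l}(e^v)$ on some real neighbourhood of $v_0$; replacing each $\Phi_n(v-v_0)$ by the \emph{finite} sum (\ref{FiPrepres}) gives exactly $F_{k,l}(e^v)$ on that neighbourhood. Both $F_{k,l}(r)$ and $f_{k,l}(r)$ are real-analytic on $(r_0,\min(r_1,1/2\tau))$ and coincide on a subinterval, so the identity theorem extends the equality to the whole interval. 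The main obstacle is the inner-sum estimate: the positive exponents $k+2m$ and the negative exponents $-k-d+2+2m$ behave oppositely according as $\xi<1$ or $\xi>1$, and it is precisely this two-sided geometric estimate, matched against $(2e^{v_0}(\tau+\varepsilon))^n$, that produces the sharp radius $1/2\tau$.
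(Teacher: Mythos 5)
Your proof is correct and follows essentially the same route as the paper: the same bounds from Theorem \ref{ThmGutest} and Proposition \ref{Propqn} (with $\alpha=1$), the same even/odd splitting of the inner sum into partial geometric series, and the same comparison with $\sum_n (n+1)\left(2(\tau+\varepsilon)\rho\right)^n$; your case distinction $\xi\le 1$ versus $\xi>1$ is just a reformulation of the paper's trick of enlarging $\rho$ so that $\rho e^{-v_0}>1$. Your explicit identity-theorem argument for $F_{k,l}=f_{k,l}$ on the whole interval is a welcome addition that the paper leaves implicit.
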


\begin{proof}
Let $K$ be a compact subset of $\left\{  z\in\mathbb{C};0<\left\vert
z\right\vert <1/2\tau\right\}  .$ Then there exists $\rho\in\left(
0,1/2\tau\right)  $ with $K\subset\left\{  z\in\mathbb{C};0<\left\vert
z\right\vert \leq\rho\right\}  .$ Let $v_{0}$ satisfy $r_{0}<e^{v_{0}}%
<\min\left\{  r_{1},1/2\tau\right\}  $. If necessary, we can make $\rho$
larger such that
\begin{equation}
e^{v_{0}}<\rho<1/2\tau. \label{eqrho1}%
\end{equation}
Then $2\rho\tau<1$ and therefore there exists $\varepsilon>0$ such that
$2\rho\left(  \tau+\varepsilon\right)  <1.$ Theorem \ref{ThmGutest} provides
the estimate
\[
\left\vert D^{\left(  n\right)  }\widetilde{f}_{k,l}\left(  v_{0}\right)
\right\vert \leq Cn!e^{nv_{0}}\left(  \tau+\varepsilon\right)  ^{n}.
\]
Since $1/\left\vert q_{n}^{\prime}\left(  \lambda_{j}\right)  \right\vert
\leq2^{n}/n!,$ we obtain
\[
A\left(  z\right)  :=\sum_{n=0}^{\infty}\sum_{j=0}^{n}\left\vert D^{\left(
n\right)  }\widetilde{f}_{k,l}\left(  v_{0}\right)  \frac{e^{-\lambda
_{j}\left(  k\right)  v_{0}}}{q_{n}^{\prime}\left(  \lambda_{j}\left(
k\right)  \right)  }z^{\lambda_{j}\left(  k\right)  }\right\vert \leq
\sum_{n=0}^{\infty}Ce^{nv_{0}}2^{n}\left(  \tau+\varepsilon\right)  ^{n}%
\sum_{j=0}^{n}\left\vert ze^{-v_{0}}\right\vert ^{\lambda_{j}\left(  k\right)
}.
\]
Moreover
\[
\sum_{j=0}^{n}\left\vert ze^{-v_{0}}\right\vert ^{\lambda_{j}\left(  k\right)
}\leq\left(  \left\vert ze^{-v_{0}}\right\vert ^{k}+\left\vert ze^{-v_{0}%
}\right\vert ^{-k-d+2}\right)  \sum_{j=0}^{\left[  n/2\right]  }\left\vert
ze^{-v_{0}}\right\vert ^{2j}.
\]
Clearly $\left\vert ze^{-v_{0}}\right\vert ^{2j}\leq\left(  \rho e^{-v_{0}%
}\right)  ^{2j}$ for $\left\vert z\right\vert \leq\rho$ and for $j=0,\ldots
,\left[  n/2\right]  $. Since $\rho e^{-v_{0}}>1$ by (\ref{eqrho1}), we
estimate $\left(  \rho e^{-v_{0}}\right)  ^{2j}\leq\left(  \rho e^{-v_{0}%
}\right)  ^{n}$ and we obtain
\[
A\left(  z\right)  \leq\left(  \left\vert ze^{-v_{0}}\right\vert
^{k}+\left\vert ze^{-v_{0}}\right\vert ^{-k-d+2}\right)  \sum_{n=0}^{\infty
}C\left(  n+1\right)  \left(  2\left(  \tau+\varepsilon\right)  \rho\right)
^{n}%
\]
for all $z\in K.$ This series converges since $2\rho\left(  \tau
+\varepsilon\right)  <1.$
\end{proof}

\begin{theorem}
\label{ThmCoeff}Let $d>1$ be odd and $\lambda_{j}\left(  k\right)  $ as in
(\ref{eqLLnew}). Let $f:A\left(  r_{0},r_{1}\right)  \rightarrow\mathbb{C}$ be
polyharmonic of infinite order and type $\tau<1/2r_{0}.$ Then for each
$k\in\mathbb{N}_{0},l=1,\ldots,a_{k},$ there exist complex numbers $a_{k,l,j}$
with $j\in\mathbb{N}_{0}$ such that
\begin{equation}
f_{k,l}\left(  z\right)  =z^{k}\sum_{j=0}^{\infty}a_{k,l,2j}z^{2j}%
+z^{-k-d+2}\sum_{j=0}^{\infty}a_{k,l,2j+1}z^{2j} \label{eqlaurent}%
\end{equation}
converges compactly and absolutely in the annulus $\left\{  z\in
\mathbb{C};0<\left\vert z\right\vert <1/2\tau\right\}  $. The power series
\[
f_{k,l}^{\left(  1\right)  }\left(  z\right)  :=\sum_{j=0}^{\infty}%
a_{k,l,2j}z^{2j}\text{ and }f_{k,l}^{\left(  2\right)  }\left(  z\right)
:=\sum_{j=0}^{\infty}a_{k,l,2j+1}z^{2j}%
\]
have convergence radius at least $1/2\tau.$
\end{theorem}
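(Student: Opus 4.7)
The plan is to derive the decomposition directly from Theorem~\ref{ThmMain1} by rearranging its double series according to the two arithmetic progressions of exponents that appear. First I would fix $v_0$ with $r_0 < e^{v_0} < \min\{r_1, 1/2\tau\}$ and, to lighten notation, set
\[
c_{n,j} := D^{(n)}\widetilde{f}_{k,l}(v_0) \frac{e^{-\lambda_j(k) v_0}}{q_n'(\lambda_j(k))}.
\]
By Theorem~\ref{ThmMain1} the double series $\sum_{n=0}^\infty \sum_{j=0}^n c_{n,j} z^{\lambda_j(k)}$ converges absolutely and compactly to $f_{k,l}(z)$ on the annulus $\{0 < |z| < 1/2\tau\}$; its proof in fact bounds the absolute sum, which is the decisive point legitimising any rearrangement.

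The key observation is that the exponents split into two disjoint arithmetic progressions: $\lambda_{2j}(k) = k + 2j$ for even index and $\lambda_{2j+1}(k) = -k-d+2+2j$ for odd index. These are disjoint because $d > 1$ is odd, so $2k + d - 2$ is a positive odd integer that cannot equal any even number $2(j'-j)$. I would use absolute convergence of the double series to regroup all terms contributing to a common exponent and define
\[
a_{k,l,2j} := \sum_{n=2j}^\infty c_{n,2j}, \qquad a_{k,l,2j+1} := \sum_{n=2j+1}^\infty c_{n,2j+1}.
\]
Each inner series converges absolutely: fixing any $z_0$ in the annulus and dividing out the nonzero factor $z_0^{\lambda_j(k)}$ from the double absolute sum shows that $\sum_n |c_{n,j}| < \infty$ for every $j$. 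After the rearrangement one obtains
\[
f_{k,l}(z) = \sum_{j=0}^\infty a_{k,l,2j}\, z^{k+2j} + \sum_{j=0}^\infty a_{k,l,2j+1}\, z^{-k-d+2+2j},
\]
which factors exactly as required.

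Finally, to show that the two power series $f_{k,l}^{(1)}$ and $f_{k,l}^{(2)}$ each have radius of convergence at least $1/2\tau$, I would argue as follows: for any $\rho \in (0, 1/2\tau)$ the rearranged series converges absolutely on $|z| = \rho$, so the two non-negative subsums
\[
\sum_{j=0}^\infty |a_{k,l,2j}|\, \rho^{k+2j} \quad \text{and} \quad \sum_{j=0}^\infty |a_{k,l,2j+1}|\, \rho^{-k-d+2+2j}
\]
are both finite. Dividing by the fixed positive factors $\rho^k$ and $\rho^{-k-d+2}$ yields convergence of $\sum |a_{k,l,2j}|\, \rho^{2j}$ and $\sum |a_{k,l,2j+1}|\, \rho^{2j}$ for every $\rho < 1/2\tau$, which is precisely the asserted bound on the radii. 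The only delicate point is the legitimacy of the rearrangement and the splitting of the single compositional sum into the two exponent classes; both are handled entirely by the absolute convergence already established in Theorem~\ref{ThmMain1} together with the disjointness of the two arithmetic progressions, so no new growth estimates are required beyond what is in the preceding theorem.
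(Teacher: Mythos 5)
Your proposal is correct and follows the same overall architecture as the paper's proof: you define $a_{k,l,j}$ as the inner sum $\sum_{n\ge j}c_{n,j}$, invoke the absolute compact convergence from Theorem~\ref{ThmMain1} to justify the rearrangement of the double series, and split over even and odd indices $j$. The one step you handle differently is the lower bound $1/2\tau$ on the radii of $f_{k,l}^{(1)}$ and $f_{k,l}^{(2)}$: the paper derives explicit coefficient estimates
$\left\vert a_{k,l,2j}\right\vert \leq Ce^{-kv_{0}}\left[2(\tau+\varepsilon)\right]^{2j}/\bigl(1-2e^{v_{0}}(\tau+\varepsilon)\bigr)$ (and the analogue for odd indices) from the bound $1/\left\vert q_{n}^{\prime}(\lambda_{j})\right\vert \leq 2^{n}/n!$ of Proposition~\ref{Propqn} together with Theorem~\ref{ThmGutest}, and then applies the root test, whereas you extract the radii ``softly'' from the finiteness of the absolute sum on each circle $\left\vert z\right\vert=\rho<1/2\tau$ and divide out the fixed powers $\rho^{k}$ and $\rho^{-k-d+2}$. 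Your argument is valid and slightly more economical for this theorem in isolation; the trade-off is that the paper's quantitative bounds (\ref{eqaaaa1})--(\ref{eqaaaa2}) are exactly what is reused in the proof of Theorem~\ref{ThmMM} to control the Fourier--Laplace series in $\mathbb{C}^{d}$, so the explicit route is not redundant in the context of the paper. Your observation that the two exponent progressions are disjoint because $2k+d-2$ is odd is correct, though it is only needed for uniqueness of the coefficients (as in the Remark following the theorem), not for the existence of the decomposition itself.
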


\begin{proof}
1. First we define the coefficients $a_{k,l,j}.$ Since $\tau<1/2r_{0},$ there
exists $v_{0}\in\left(  \log r_{0},\log\left(  1/2\tau\right)  \right)  $ and
we can assume that $e^{v_{0}}<r_{1}.$ Then $e^{v_{0}}<1/2\tau$ and we can find
$\varepsilon>0$ such that $e^{v_{0}}\left(  \tau+\varepsilon\right)  <1/2.$ We
put
\begin{equation}
a_{k,l,j}:=e^{-\lambda_{j}\left(  k\right)  v_{0}}\sum_{n=j}^{\infty}%
\frac{D^{\left(  n\right)  }\widetilde{f}_{k,l}\left(  v_{0}\right)  }%
{q_{n}^{\prime}\left(  \lambda_{j}\left(  k\right)  \right)  }.
\label{eqdefaaa}%
\end{equation}
Using the estimate (\ref{estest}) in Theorem \ref{ThmGutest}, we see that
\[
\sum_{n=j}^{\infty}\left\vert \frac{D^{\left(  n\right)  }\widetilde{f}%
_{k,l}\left(  v_{0}\right)  }{q_{n}^{\prime}\left(  \lambda_{j}\left(
k\right)  \right)  }\right\vert \leq C\sum_{n=j}^{\infty}\left[  e^{v_{0}%
}\left(  \tau+\varepsilon\right)  \right]  ^{n}\frac{n!}{\left\vert
q_{n}^{\prime}\left(  \lambda_{j}\left(  k\right)  \right)  \right\vert },
\]
and the last series is converging using the ratio test for $b_{n}%
:=n!/\left\vert q_{n}^{\prime}\left(  \lambda_{j}\left(  k\right)  \right)
\right\vert $
\[
\frac{b_{n+1}}{b_{n}}=\frac{n+1}{\left\vert \lambda_{n+1}\left(  k\right)
-\lambda_{j}\left(  k\right)  \right\vert }\rightarrow1
\]
and the fact that $e^{v_{0}}\left(  \tau+\varepsilon\right)  <1/2$. So far we
have proven that the coefficients $a_{k,l,j}$ are well defined.

2. Using (\ref{eqdefaaa}) and the fact that $1/\left\vert q_{n}^{\prime
}\left(  \lambda_{j}\left(  k\right)  \right)  \right\vert \leq2^{n}/n!,$ we
obtain
\begin{equation}
\left\vert a_{k,l,j}\right\vert \leq Ce^{-\lambda_{j}\left(  k\right)  v_{0}%
}\sum_{n=j}^{\infty}\left[  2e^{v_{0}}\left(  \tau+\varepsilon\right)
\right]  ^{n}=Ce^{-\lambda_{j}\left(  k\right)  v_{0}}\frac{\left[  2e^{v_{0}%
}\left(  \tau+\varepsilon\right)  \right]  ^{j}}{1-2e^{v_{0}}\left(
\tau+\varepsilon\right)  }. \label{eqestaa}%
\end{equation}
Using the definition of $\lambda_{2j}\left(  k\right)  $ and $\lambda
_{2j+1}\left(  k\right)  ,$ we obtain the estimate
\begin{align}
\left\vert a_{k,l,2j}\right\vert  &  \leq Ce^{-kv_{0}}\frac{\left[  2\left(
\tau+\varepsilon\right)  \right]  ^{2j}}{1-2e^{v_{0}}\left(  \tau
+\varepsilon\right)  },\label{eqaaaa1}\\
\left\vert a_{k,l,2j+1}\right\vert  &  \leq Ce^{\left(  k+d\right)  v_{0}%
}\frac{\left[  2\left(  \tau+\varepsilon\right)  \right]  ^{2j+1}}%
{1-2e^{v_{0}}\left(  \tau+\varepsilon\right)  }. \label{eqaaaa2}%
\end{align}
It follows that $\overline{\lim_{j\rightarrow\infty}}\sqrt[2j]{\left\vert
a_{k,l,2j}\right\vert }\leq2\left(  \tau+\varepsilon\right)  $ and
$\overline{\lim_{j\rightarrow\infty}}\sqrt[2j]{\left\vert a_{k,l,2j+1}%
\right\vert }\leq2\left(  \tau+\varepsilon\right)  $ for any $\varepsilon>0,$
from which we conclude that the power series $f_{k,l}^{\left(  1\right)  }$
and $f_{k,l}^{\left(  2\right)  }$ have convergence radius at least $1/2\tau.$

3. By Theorem \ref{ThmMain1} the series
\[
\sum_{n=0}^{\infty}\sum_{j=0}^{n}D^{\left(  n\right)  }\widetilde{f}%
_{k,l}\left(  v_{0}\right)  \frac{e^{-\lambda_{j}\left(  k\right)  v_{0}}%
}{q_{n}^{\prime}\left(  \lambda_{j}\left(  k\right)  \right)  }z^{\lambda
_{j}\left(  k\right)  }%
\]
converges compactly on each compact subset $K$ of $\left\{  z\in
\mathbb{C};0<\left\vert z\right\vert <1/2\tau\right\}  .$ So we may rearrange
the series and the series
\[
\sum_{j=0}^{\infty}z^{\lambda_{j}\left(  k\right)  }e^{-\lambda_{j}\left(
k\right)  v_{0}}\sum_{n=j}^{\infty}\frac{D^{\left(  n\right)  }\widetilde
{f}_{k,l}\left(  v_{0}\right)  }{q_{n}^{\prime}\left(  \lambda_{j}\left(
k\right)  \right)  }=\sum_{j=0}^{\infty}a_{k,l,j}z^{\lambda_{j}\left(
k\right)  }%
\]
converges compactly in $\left\{  z\in\mathbb{C};0<\left\vert z\right\vert
<1/2\tau\right\}  .$ The decomposition (\ref{eqlaurent}) follows by splitting
the sum over odd and even indices. The proof is complete.
\end{proof}

\begin{remark}
The coefficients $a_{k,l,j}$ do not depend on the special value $v_{0}$ since
the coefficients in (\ref{eqlaurent}) are unique. The coefficients $a_{k,l,j}$
in (\ref{eqdefaaa}) are well defined provided that $f$ is polyharmonic of
infinite order and type $<1/r_{0}.$ However, for the estimate (\ref{eqestaa})
we needed that the type $\tau$ is smaller than $1/2r_{0}$.
\end{remark}

\section{Analytic extensions of polyharmonic functions of infinite order for
odd dimension}

We recall some notations and basic facts. We have defined $q\left(  z\right)
:=z_{1}^{2}+\cdots+z_{d}^{2}$ for $z=\left(  z_{1},\ldots,z_{d}\right)
\in\mathbb{C}^{d}$ and clearly the following inequality holds for all
$z\in\mathbb{C}^{d}:$
\[
\left\vert q\left(  z\right)  \right\vert \leq\left\vert z_{1}\right\vert
^{2}+\cdots+\left\vert z_{d}\right\vert ^{2}=:\left\vert z\right\vert
_{\mathbb{C}^{d}}^{2}.
\]
Note that $q\left(  z\right)  $ is the analytic extension of $\left\vert
x\right\vert ^{2}=x_{1}^{2}+\cdots+x_{d}^{2}.$ The \emph{Lie norm}
$L_{+}\left(  z\right)  \in\left[  0,\infty\right)  $ is defined by the
equation
\[
L_{+}\left(  z\right)  ^{2}=\left\vert z\right\vert _{\mathbb{C}^{d}}%
^{2}+\sqrt{\left\vert z\right\vert _{\mathbb{C}^{d}}^{4}-\left\vert q\left(
z\right)  \right\vert ^{2}}\text{ for }z\in\mathbb{C}^{d}%
\]
(see e.g.\ \cite{ACL83}, \cite{Avan85}, \cite{Mori98}, \cite{Shai03}). Note
that $\left\vert z\right\vert _{\mathbb{C}^{d}}\leq L_{+}\left(  z\right)  $
for all $z\in\mathbb{C}^{d}.$ In \cite{KoRe08} the following estimate is
established (see also \cite{FuMo02}):
\begin{equation}
\sum_{l=1}^{a_{k}}\left\vert Y_{k,l}\left(  z\right)  \right\vert ^{2}%
\leq\frac{a_{k}}{\omega_{d-1}}\left(  \left\vert z\right\vert ^{2}%
+\sqrt{\left\vert z\right\vert ^{4}-\left\vert q\left(  z\right)  \right\vert
^{2}}\right)  ^{k}=\frac{a_{k}}{\omega_{d-1}}\left(  L_{+}\left(  z\right)
\right)  ^{2k} \label{ineqLie}%
\end{equation}
for all $z\in\mathbb{C}^{d}.$ Using the Cauchy Schwarz inequality one obtains
\begin{equation}
\sum_{l=1}^{a_{k}}\left\vert Y_{k,l}\left(  z\right)  \right\vert \leq
\sqrt{a_{k}}\sqrt{\sum_{l=1}^{a_{k}}\left\vert Y_{k,l}\left(  z\right)
\right\vert ^{2}}\leq\frac{a_{k}}{\sqrt{\omega_{d-1}}}\left(  L_{+}\left(
z\right)  \right)  ^{k}. \label{eqbasic}%
\end{equation}
Now we define $L_{-}\left(  z\right)  :=\sqrt{\left\vert z\right\vert
_{\mathbb{C}^{d}}^{2}-\sqrt{\left\vert z\right\vert _{\mathbb{C}^{d}}%
^{4}-\left\vert q\left(  z\right)  \right\vert ^{2}}}$ for $z\in\mathbb{C}%
^{d}.$ Then $0\leq L_{-}\left(  z\right)  \leq L_{+}\left(  z\right)  $ and it
is easy to see that
\[
L_{+}\left(  z\right)  L_{-}\left(  z\right)  =\left\vert q\left(  z\right)
\right\vert \text{ for all }z\in\mathbb{C}^{d}.
\]
In analogy to the Lie ball we define the \emph{Lie annulus} as the set
\[
\widetilde{A}\left(  r_{0},r_{1}\right)  :=\left\{  z\in\mathbb{C}^{d}%
;r_{0}<L_{-}\left(  z\right)  \text{ and }L_{+}\left(  z\right)
<r_{1}\right\}  .
\]
In \cite[p.\ 95]{Avan85} it is shown that $\widetilde{A}\left(  r_{0}%
,r_{1}\right)  $ is the harmonicity hull of the annular domain $A\left(
r_{0},r_{1}\right)  .$ It can be shown that $\widetilde{A}\left(  r_{0}%
,r_{1}\right)  $ is connected. On the other hand, the complement of
$\widetilde{A}\left(  r_{0},r_{1}\right)  $ in $\mathbb{C}^{d}$ is connected
as well, in contrast to the fact that the complement of the annular region
$A\left(  r_{0},r_{1}\right)  $ in $\mathbb{R}^{d}$ consists of two connected components.

It is known that a polyharmonic function of infinite order and type $0$ can be
extended to a \emph{multi-valued analytic} function on the harmonicity hull
(see \cite{ACL83}) and to a \emph{single-valued analytic} function on
$\ker(\widetilde{A}\left(  r_{0},r_{1}\right)  ),$ the \emph{kernel} (in
\cite[p.\ 131]{Avan85} \emph{noyau}) of the harmonicity hull (see
\cite[p.\ 135]{Avan85} for details) which is clearly contained in the set
$\widetilde{A}\left(  r_{0},r_{1}\right)  \setminus q^{-1}\left(  \left(
-\infty,0\right]  \right)  .$

We present now our main result about analytical extendibility of polyharmonic
functions of infinite order and type $\tau<1/2r_{1}$ on the annular region
$A\left(  r_{0},r_{1}\right)  $.

\begin{theorem}
\label{ThmMM}Let $d>1$ be odd and let $f:A\left(  r_{0},r_{1}\right)
\rightarrow\mathbb{C}$ be polyharmonic of infinite order and type
$\tau<1/2r_{1}.$ Then there exist analytic functions
\begin{align*}
f_{1}  &  :\left\{  z\in\mathbb{C}^{d};L_{+}\left(  z\right)  <r_{1}\right\}
\rightarrow\mathbb{C}\\
f_{2}  &  :\{z\in\mathbb{C}^{d};r_{0}<L_{-}\left(  z\right)  \leq L_{+}\left(
z\right)  <1/2\tau\}\rightarrow\mathbb{C}%
\end{align*}
such that
\[
F\left(  z\right)  =f_{1}\left(  z\right)  +\left(  z_{1}^{2}+\cdots+z_{d}%
^{2}\right)  ^{\left(  2-d\right)  /2}f_{2}\left(  z\right)
\]
is an analytic extension of $f.$ Here $F$ is defined for all $z\in$
$\widetilde{A}\left(  r_{0},r_{1}\right)  \setminus q^{-1}\left(  \left(
-\infty,0\right]  \right)  .$
\end{theorem}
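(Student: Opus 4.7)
The plan is to reverse-engineer the extension $F$ from the Laurent-type decomposition of the Fourier--Laplace coefficients provided by Theorem \ref{ThmCoeff}. Since $\tau<1/(2r_1)\le 1/(2r_0)$, that theorem applies and yields coefficients $a_{k,l,j}$ such that
\[
f_{k,l}(r)=r^{k}\tilde p_{k,l}(r^{2})+r^{-k-d+2}\tilde q_{k,l}(r^{2}),
\]
where $\tilde p_{k,l}(w):=\sum_{j=0}^{\infty}a_{k,l,2j}w^{j}$ and $\tilde q_{k,l}(w):=\sum_{j=0}^{\infty}a_{k,l,2j+1}w^{j}$ have convergence radius at least $1/(4\tau^{2})$. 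Using $r^{k}Y_{k,l}(\theta)=Y_{k,l}(x)$ and hence $r^{-k-d+2}Y_{k,l}(\theta)=r^{2-d}\,|x|^{-2k}Y_{k,l}(x)$ for $x=r\theta$, the Fourier--Laplace expansion of $f$ on $A(r_0,r_1)$ rewrites as
\[
f(x)=\sum_{k,l}\tilde p_{k,l}(|x|^{2})Y_{k,l}(x)+|x|^{2-d}\sum_{k,l}|x|^{-2k}\tilde q_{k,l}(|x|^{2})Y_{k,l}(x).
\]
This motivates defining
\[
f_{1}(z):=\sum_{k,l}\tilde p_{k,l}(q(z))Y_{k,l}(z),\qquad f_{2}(z):=\sum_{k,l}q(z)^{-k}\tilde q_{k,l}(q(z))Y_{k,l}(z),
\]
and $F(z):=f_{1}(z)+q(z)^{(2-d)/2}f_{2}(z)$, where $q(z)^{(2-d)/2}$ uses the principal branch on $\mathbb{C}\setminus(-\infty,0]$.

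The main work is to establish compact convergence of these two series on the respective open sets by combining the explicit coefficient bounds (\ref{eqaaaa1}) and (\ref{eqaaaa2}) with the harmonic-polynomial estimate (\ref{eqbasic}). For $f_{1}$, fix a compact $K\subset\{L_{+}<r_{1}\}$ with $\max_{K}L_{+}\le\rho<r_{1}$ and choose $v_{0}$ with $\max(\log r_{0},\log\rho)<v_{0}<\log r_{1}$. Since $r_{1}<1/(2\tau)$, one may choose $\varepsilon>0$ small enough that $4(\tau+\varepsilon)^{2}\rho^{2}<1$, so summing the geometric series and applying (\ref{eqaaaa1}) gives $|\tilde p_{k,l}(q(z))|\le C\,e^{-kv_{0}}$ uniformly on $K$. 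Combined with $\sum_{l=1}^{a_{k}}|Y_{k,l}(z)|\le (a_{k}/\sqrt{\omega_{d-1}})L_{+}(z)^{k}$, the double sum is dominated by a constant times $\sum_{k}a_{k}(\rho/e^{v_{0}})^{k}$, which converges because $\rho<e^{v_{0}}$ and $a_{k}$ grows polynomially in $k$. For $f_{2}$, on a compact subset with $r_{0}<\rho_{0}\le L_{-}$ and $L_{+}\le\rho_{1}<1/(2\tau)$, choose $v_{0}$ with $\log r_{0}<v_{0}<\log\min(\rho_{0},r_{1})$; the exponential growth $e^{(k+d)v_{0}}$ from (\ref{eqaaaa2}) is cancelled by $|q(z)|^{-k}L_{+}^{k}=(L_{+}L_{-})^{-k}L_{+}^{k}=L_{-}^{-k}$, leaving a majorant $C\sum_{k}a_{k}(e^{v_{0}}/L_{-})^{k}$ which converges since $e^{v_{0}}<\rho_{0}\le L_{-}$. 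Each summand is holomorphic in $z$ (note $q(z)\ne 0$ throughout the domain of $f_{2}$ since $L_{-}(z)>0$), so uniform convergence on compact sets delivers the required analyticity.

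It remains to verify $F|_{A(r_{0},r_{1})}=f$ and deal with uniqueness. On the real annulus $q(x)=|x|^{2}>0$, whence $q(x)^{(2-d)/2}=|x|^{2-d}$, and the rewriting displayed above gives $F(x)=f(x)$ pointwise. Since $f$ is real-analytic and $F$ is holomorphic on the connected open set $\widetilde A(r_{0},r_{1})\setminus q^{-1}((-\infty,0])$ containing $A(r_{0},r_{1})$, the identity principle identifies $F$ as the unique analytic extension of $f$; moreover the remark following Theorem \ref{ThmCoeff} guarantees that the coefficients $a_{k,l,j}$ are independent of $v_{0}$, so the two different choices of $v_{0}$ used for $f_{1}$ and $f_{2}$ produce the same holomorphic functions on the respective domains. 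The main obstacle I anticipate is the bookkeeping of the competing constraints on $v_{0}$ (the Theorem \ref{ThmCoeff} interval $(\log r_{0},\log r_{1})$ together with the compact-set-dependent inequalities $e^{v_{0}}>\rho$ or $e^{v_{0}}<\rho_{0}$), and verifying that these constraints are simultaneously satisfiable precisely because of the separation $r_{0}<\rho_{0}\le L_{-}\le L_{+}\le\rho<r_{1}$ afforded by the assumption $\tau<1/(2r_{1})$.
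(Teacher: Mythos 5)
Your proposal is correct and follows essentially the same route as the paper's own proof: both start from the Laurent-type expansion of the Fourier--Laplace coefficients in Theorem \ref{ThmCoeff}, split into even and odd indices $j$ to define $f_{1}$ and $f_{2}$, and establish compact convergence by combining the coefficient bounds (\ref{eqaaaa1})--(\ref{eqaaaa2}) with the estimate (\ref{eqbasic}) and the identity $L_{+}(z)/\left\vert q(z)\right\vert =1/L_{-}(z)$. The only differences are presentational (you package the $j$-sums as power series $\tilde p_{k,l}$, $\tilde q_{k,l}$ and make the restriction $F|_{A(r_{0},r_{1})}=f$ and the $v_{0}$-independence explicit, which the paper leaves implicit).
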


\begin{proof}
1. Let us recall that $f\left(  x\right)  =\sum_{k=0}^{\infty}\sum
_{l=1}^{a_{k}}f_{k,l}\left(  r\right)  Y_{k,l}\left(  \theta\right)  $ for
$x=r\theta$, and let $\lambda_{j}\left(  k\right)  $ be as in (\ref{eqLLnew}).
By Theorem \ref{ThmCoeff}, each Fourier-Laplace coefficient $f_{k,l}\left(
r\right)  $ can be expanded in a series of type $\sum_{j=0}^{\infty}%
a_{k,l,j}r^{\lambda_{j}\left(  k\right)  },$ and hence
\begin{equation}
f\left(  x\right)  =\sum_{k=0}^{\infty}\sum_{l=1}^{a_{k}}\sum_{j=0}^{\infty
}a_{k,l,j}r^{\lambda_{j}\left(  k\right)  }Y_{k,l}\left(  \theta\right)  .
\label{eqfff}%
\end{equation}
Moreover $Y_{k,l}\left(  x\right)  =r^{k}Y_{k,l}\left(  \theta\right)  $ for
$x=r\theta.$ We consider even and odd indices $j$ in (\ref{eqfff}) and define
two functions $f_{1}$ and $f_{2}$ such that $f\left(  x\right)  =f_{1}\left(
x\right)  +r^{2-d}f_{2}\left(  x\right)  ,$ where
\begin{align*}
f_{1}\left(  x\right)   &  =\sum_{k=0}^{\infty}\sum_{l=1}^{a_{k}}\sum
_{j=0}^{\infty}a_{k,l,2j}r^{2j}Y_{k,l}\left(  x\right)  ,\\
f_{2}\left(  x\right)   &  =\sum_{k=0}^{\infty}\sum_{l=1}^{a_{k}}\sum
_{j=0}^{\infty}a_{k,l,2j+1}r^{-2k+2j}Y_{k,l}\left(  x\right)  .
\end{align*}
We shall show that $f_{1}\left(  x\right)  $ can be analytically extended for
all $z$ with $L_{+}\left(  z\right)  <r_{1}$ and that $f_{2}\left(  x\right)
$ can be analytically extended for all $z$ with $r_{0}<L_{-}\left(  z\right)
$ and $L_{+}\left(  z\right)  <1/2\tau.$

2. The function $r^{2}=x_{1}^{2}+\cdots+x_{d}^{2}$ has the analytic extension
$q\left(  z\right)  =z_{1}^{2}+\cdots+z_{d}^{2}$ for $z=\left(  z_{1}%
,\ldots,z_{d}\right)  \in\mathbb{C}^{d}.$ The polynomial $Y_{k,l}\left(
x\right)  $ has the analytic extension $Y_{k,l}\left(  z\right)  .$

Next we show that
\[
F_{1}\left(  z\right)  :=\sum_{k=0}^{\infty}\sum_{l=1}^{a_{k}}\sum
_{j=0}^{\infty}a_{k,l,2j}q\left(  z\right)  ^{j}Y_{k,l}\left(  z\right)
\]
converges absolutely for all $z$ with $L_{+}\left(  z\right)  \leq\rho$ for
any $0<\rho<r_{1}.$ Since $\rho<r_{1}$ and $\tau<1/2r_{1},$ we can find
$v_{0}\in\left(  \log r_{0},\log r_{1}\right)  $ such that $\rho<e^{v_{0}%
}<1/2\tau.$ Choose $\varepsilon>0$ such that $2e^{v_{0}}\left(  \tau
+\varepsilon\right)  <1.$ We use now (\ref{eqaaaa1}) and the estimate
$\left\vert q\left(  z\right)  \right\vert \leq\left\vert z\right\vert
_{\mathbb{C}^{d}}^{2}\leq L_{+}^{2}\left(  z\right)  \leq\rho^{2}$ and we
obtain
\[
\left\vert F_{1}\left(  z\right)  \right\vert \leq C\sum_{k=0}^{\infty}%
\sum_{l=1}^{a_{k}}\sum_{j=0}^{\infty}\left\vert Y_{k,l}\left(  z\right)
\right\vert e^{-kv_{0}}\frac{\left[  2\left(  \tau+\varepsilon\right)
\right]  ^{2j}}{1-2e^{v_{0}}\left(  \tau+\varepsilon\right)  }\rho^{2j}.
\]

Since $\rho e^{-v_{0}}<1$ and $2e^{v_{0}}\left(  \tau+\varepsilon\right)  <1,$
the series $\sum_{j=0}^{\infty}\left[  2\left(  \tau+\varepsilon\right)
\rho\right]  ^{2j}$ converges and there exists a constant $C_{1}$ such that
\[
\left\vert F_{1}\left(  z\right)  \right\vert \leq C_{1}\sum_{k=0}^{\infty
}\sum_{l=1}^{a_{k}}\left\vert Y_{k,l}\left(  z\right)  \right\vert e^{-kv_{0}%
}\leq C_{1}\sum_{k=0}^{\infty}e^{-kv_{0}}\frac{a_{k}}{\sqrt{\omega_{d-1}}%
}\left(  L_{+}\left(  z\right)  \right)  ^{k}%
\]
where we have used (\ref{eqbasic}). Since $L_{+}\left(  z\right)  \leq\rho$
and $\rho e^{-v_{0}}<1,$ we see that the last sum converges.

3. It remains to show that
\[
F_{2}\left(  z\right)  =\sum_{k=0}^{\infty}\sum_{l=1}^{a_{k}}\sum
_{j=0}^{\infty}a_{k,l,2j+1}q\left(  z\right)  ^{-k+j}Y_{k,l}\left(  z\right)
\]
converges compactly in $\left\{  r_{0}<L_{-}\left(  z\right)  \text{ and
}L_{+}\left(  z\right)  <1/2\tau\right\}  .$ Let $\rho_{0}$ and $\rho_{1}$ be
positive numbers such that $r_{0}<\rho_{0}$ and $\rho_{1}<1/2\tau,$ and assume
that $L_{-}\left(  z\right)  \geq\rho_{0}$ and $L_{+}\left(  z\right)
\leq\rho_{1}.$ Choose $v_{0}$ such that $r_{0}<e^{v_{0}}<\rho_{0}$, so
$e^{v_{0}}/\rho_{0}<1$. Moreover we can assume that $e^{v_{0}}<1/2\tau$ in
view of our general assumption $r_{0}<1/2\tau.$ Then there exists
$\varepsilon>0$ such that
\begin{equation}
2e^{v_{0}}\left(  \tau+\varepsilon\right)  <1\text{ and }2\left(
\tau+\varepsilon\right)  \rho_{1}<1. \label{eqnice}%
\end{equation}
The estimate (\ref{eqaaaa2}) gives
\[
\left\vert F_{2}\left(  z\right)  \right\vert \leq C\sum_{k=0}^{\infty}%
\sum_{l=1}^{a_{k}}\sum_{j=0}^{\infty}e^{\left(  k+d\right)  v_{0}}%
\frac{\left[  2\left(  \tau+\varepsilon\right)  \right]  ^{2j+1}}{1-2e^{v_{0}%
}\left(  \tau+\varepsilon\right)  }\left\vert q\left(  z\right)  \right\vert
^{-k+j}\left\vert Y_{k,l}\left(  z\right)  \right\vert .
\]
Since $\left\vert q\left(  z\right)  \right\vert \leq L_{+}^{2}\left(
z\right)  \leq\rho_{1}^{2},$ we estimate
\[
\left\vert F_{2}\left(  z\right)  \right\vert \leq C\sum_{k=0}^{\infty}%
\sum_{l=1}^{a_{k}}e^{\left(  k+d\right)  v_{0}}\left\vert q\left(  z\right)
\right\vert ^{-k}\left\vert Y_{k,l}\left(  z\right)  \right\vert \sum
_{j=0}^{\infty}\frac{1}{\rho_{1}}\frac{\left[  2\left(  \tau+\varepsilon
\right)  \rho_{1}\right]  ^{2j+1}}{1-2e^{v_{0}}\left(  \tau+\varepsilon
\right)  }.
\]

The last series converges since $2\left(  \tau+\varepsilon\right)  \rho
_{1}<1,$ and is bounded by a constant, say $C_{1}.$ Further (\ref{eqbasic})
implies
\[
\left\vert F_{2}\left(  z\right)  \right\vert \leq CC_{1}\sum_{k=0}^{\infty
}e^{\left(  k+d\right)  v_{0}}\frac{a_{k}}{\sqrt{\omega_{d-1}}}\frac{\left(
L_{+}\left(  z\right)  \right)  ^{k}}{\left\vert q\left(  z\right)
\right\vert ^{k}}.
\]
Recall that $L_{+}\left(  z\right)  L_{-}\left(  z\right)  =\left\vert
q\left(  z\right)  \right\vert $ , so we can estimate $L_{+}\left(  z\right)
/\left\vert q\left(  z\right)  \right\vert =1/L_{-}\left(  z\right)
\leq1/\rho_{0}.$ Hence
\[
\left\vert F_{2}\left(  z\right)  \right\vert \leq CC_{1}\sum_{k=0}^{\infty
}e^{dv_{0}}\frac{a_{k}}{\sqrt{\omega_{d-1}}}\left(  \frac{e^{v_{0}}}{\rho_{0}%
}\right)  ^{k}%
\]
and this series converges since $e^{v_{0}}/\rho_{0}<1.$ The proof is complete.
\end{proof}

Let us illustrate the theorem for the case of a harmonic function $f$ on the
annular region $A\left(  r_{0},r_{1}\right)  .$ Then $f$ is of type $0$ and
the conclusion is that we can decompose $f$ as a sum $f_{1}\left(  x\right)
+\left\vert x\right\vert ^{2-d}f_{2}\left(  x\right)  ,$ where $f_{1}\left(
z\right)  $ is analytic on the Lie ball and $f_{2}\left(  z\right)  $ is
analytic for all $z$ with $L_{-}\left(  z\right)  >r_{0}.$

\section{\label{S7}Analytic extensions of polyharmonic functions of infinite
order for even dimension}

In this section we assume that the dimension $d$ is an even number. Let
$k\in\mathbb{N}_{0}$ be fixed. Then the exponents $\lambda_{2j}\left(
k\right)  =k+2j$ and $\lambda_{2j+1}\left(  k\right)  =-k-d+2+2j$ may be equal
and the description of the exponential space $E_{\left(  \lambda_{0}%
,\ldots,\lambda_{n}\right)  }$ defined in (\ref{Espace}) is different from the
odd case. Clearly $e_{k,2j}\left(  v\right)  :=e^{\lambda_{2j}\left(
k\right)  v}$ are solutions and for odd indices $2j+1$ we obtain the
solutions
\begin{align*}
e_{k,2j+1}\left(  v\right)   &  :=e^{\lambda_{2j+1}\left(  k\right)  v}\text{
if }{\lambda_{2j+1}}\left(  k\right)  {\notin}\left\{  k+2l;l\in\mathbb{N}%
_{0}\right\} \\
e_{k,2j+1}\left(  v\right)   &  :=v\cdot e^{\lambda_{2j+1}\left(  k\right)
v}\text{ if }{\lambda_{2j+1}}\left(  k\right)  {\in}\left\{  k+2l;l\in
\mathbb{N}_{0}\right\}
\end{align*}
Then, for suitable coefficients $d_{j},$ the fundamental function $\Phi
_{n}\left(  v\right)  $ is an exponential polynomial of the form
\begin{equation}
\Phi_{n}\left(  v\right)  =\sum_{j=0}^{n}d_{j}e_{k,j}\left(  v\right)
=\frac{1}{2\pi i}\int_{\Gamma_{r}}\frac{e^{vz}}{\left(  z-\lambda_{0}\right)
\cdots\left(  z-\lambda_{n}\right)  }dz. \label{eqphidec}%
\end{equation}
If $f$ is polyharmonic of infinite order and type $\tau$ and $v_{0}\in\left(
\log r_{0},\log r_{1}\right)  $ then, using Theorem \ref{ThmMain}, the series
\begin{equation}
f_{k,l}\left(  e^{v}\right)  =\widetilde{f}_{k,l}\left(  v\right)  =\sum
_{n=0}^{\infty}\sum_{j=0}^{n}D^{\left(  n\right)  }\widetilde{f}_{k,l}\left(
v_{0}\right)  \cdot d_{j}\cdot e_{k,j}\left(  v-v_{0}\right)  \label{eqdjdj}%
\end{equation}
converges for $v$ in a neighborhood of $v_{0}.$ Now one may try to formulate
results analogous to those given in Sections 5 and 6 where the system
$e^{\lambda_{j}\left(  k\right)  v}$ for $j\in\mathbb{N}_{0}$ is now replaced
by $e_{k,j}\left(  v\right)  $ for $j\in\mathbb{N}_{0}$. A quick inspection of
the proofs shows that one only needs an appropriate estimate for the
coefficients $d_{j}$ which in the odd case have been equal to $1/q_{n}%
^{\prime}\left(  \lambda_{j}\right)  $ for $j=0,\ldots,n.$ Below we shall show
that the coefficients $d_{j}$ in (\ref{eqdjdj}) satisfy the estimate
\begin{equation}
\left\vert d_{j}\right\vert \leq\frac{2^{n}}{\left(  n-2\right)  !},
\label{eqddd}%
\end{equation}
which is a little bit weaker than in the odd case but still good enough to
prove convergence of the involved sums. We shall leave the details to the
reader and formulate only one result for the even case:

\begin{theorem}
\label{ThmMM2}Let $d$ be even and let $f:A\left(  r_{0},r_{1}\right)
\rightarrow\mathbb{C}$ be polyharmonic of infinite order and type
$\tau<1/2r_{1}.$ Then there exist analytic functions
\begin{align*}
f_{1}  &  :\{z\in\mathbb{C}^{d};L_{+}\left(  z\right)  <r_{1}\}\rightarrow
\mathbb{C},\\
f_{2}  &  :\{z\in\mathbb{C}^{d};r_{0}<L_{-}\left(  z\right)  \text{ and }%
L_{+}\left(  z\right)  <1/2\tau\text{ and }q\left(  z\right)  \notin\left(
-\infty,0\right]  \}\rightarrow\mathbb{C}%
\end{align*}
such that
\[
F\left(  z\right)  =f_{1}\left(  z\right)  +\left(  z_{1}^{2}+\cdots+z_{d}%
^{2}\right)  ^{\left(  2-d\right)  /2}f_{2}\left(  z\right)
\]
is an analytic extension of $f.$
\end{theorem}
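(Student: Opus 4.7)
The plan follows the structure of the proof of Theorem \ref{ThmMM}, with the essential new feature being the logarithmic terms that arise in the fundamental functions when exponents coincide. For even $d$ the indices $\lambda_{2j}(k)=k+2j$ and $\lambda_{2j+1}(k)=-k-d+2+2j$ can match, and the basis functions $e_{k,j}$ then include factors $v\cdot e^{\lambda v}$; the decomposition (\ref{eqphidec}) of $\Phi_n$ plays the role of the partial-fraction expansion (\ref{FiPrepres}) used in the odd case.

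First I would prove the even-dimensional analogue of Theorem \ref{ThmCoeff}. Fix $v_0 \in (\log r_0, \log(1/2\tau)) \cap (\log r_0, \log r_1)$, use Theorem \ref{ThmGutest} to bound $|D^{(n)}\widetilde{f}_{k,l}(v_0)| \leq C n! [e^{v_0}(\tau+\varepsilon)]^n$, and apply the coefficient estimate (\ref{eqddd}) in the rearrangement of (\ref{eqdjdj}). The factor $2^n/(n-2)!$ gives a slightly weaker bound than $1/|q_n'(\lambda_j)|$ in the odd case, but the summation $\sum_n n(n-1) 2^n [e^{v_0}(\tau+\varepsilon)]^n$ still converges because $2e^{v_0}(\tau+\varepsilon)<1$. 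The outcome is a representation
\[
f_{k,l}(r) = \sum_{j=0}^\infty a_{k,l,2j}\, r^{k+2j} + \sum_{j=0}^\infty a_{k,l,2j+1}\, r^{-k-d+2+2j} + \log r\sum_{j\in S_k} b_{k,l,j}\, r^{-k-d+2+2j},
\]
where $S_k$ indexes those odd positions where a coincidence of exponents occurs. As in Theorem \ref{ThmCoeff}, the bounds
\[
|a_{k,l,2j}|, |b_{k,l,2j}| \leq C e^{-kv_0}\, [2(\tau+\varepsilon)]^{2j},\qquad |a_{k,l,2j+1}|, |b_{k,l,2j+1}| \leq C e^{(k+d)v_0}\, [2(\tau+\varepsilon)]^{2j+1},
\]
hold after absorbing a polynomial-in-$n$ correction into $\varepsilon$.

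Next I would define $f_1$ and $f_2$ by lifting these expansions to $\mathbb{C}^d$. Set
\[
f_1(z) := \sum_{k,l}\sum_{j=0}^\infty a_{k,l,2j}\, q(z)^{j}\, Y_{k,l}(z),
\]
which by exactly the computation in Step 2 of the proof of Theorem \ref{ThmMM}, using $|q(z)| \leq L_+(z)^2$ and the Lie-norm bound (\ref{eqbasic}), converges compactly for $L_+(z)<r_1$. For the odd part, factor out $r^{2-d}$: after substituting $r^2=q(x)$ and replacing $\log r$ by $\tfrac{1}{2}\log q(z)$ on $\mathbb{C}^d \setminus q^{-1}((-\infty,0])$, set
\[
f_2(z) := \sum_{k,l}\sum_{j=0}^\infty a_{k,l,2j+1}\, q(z)^{-k+j}\, Y_{k,l}(z) + \tfrac{1}{2}\log q(z) \sum_{k,l}\sum_{j\in S_k} b_{k,l,j}\, q(z)^{-k+j}\, Y_{k,l}(z).
\]
The convergence of both sums for $r_0<L_-(z)$ and $L_+(z)<1/2\tau$ reduces, via $L_+(z)/|q(z)| = 1/L_-(z)$ and (\ref{eqbasic}), to the geometric bound used in Step 3 of Theorem \ref{ThmMM}, with the additional factor $|\log q(z)|$ being locally bounded on the given domain.

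The main obstacle is the derivation of the coefficient bound (\ref{eqddd}) together with the careful tracking of the $\log$-terms: one must identify which odd indices $2j+1$ produce $v \cdot e^{\lambda v}$ instead of $e^{\lambda v}$, bound the resulting residue-type coefficients $d_j$ via the contour integral (\ref{eqphidec}) using that any two distinct $\lambda_s, \lambda_t$ still differ by at least $2$, and verify that the logarithmic piece forces exactly the cut $q(z)\notin(-\infty,0]$ needed to single-valuedly define $\log q(z)$. Once these points are settled, the Lie-norm estimates and the proof strategy of Theorem \ref{ThmMM} carry over verbatim to give the stated analytic extension.
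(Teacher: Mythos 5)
Your proposal follows essentially the same route the paper intends: the paper itself only sketches this proof, establishing the coefficient bound (\ref{eqddd}) via the partial fraction decomposition of $1/Q_n$ and explicitly leaving the remaining details to the reader, and your plan fills in exactly those details along the intended lines (rearranging (\ref{eqdjdj}) using (\ref{eqddd}), splitting into even/odd indices, and carrying over the Lie-norm estimates from Theorem \ref{ThmMM}). Your treatment of the logarithmic terms via $\tfrac{1}{2}\log q(z)$ on the cut domain correctly accounts for the extra condition $q(z)\notin(-\infty,0]$ in the domain of $f_{2}$, so the argument is sound.
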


Now we proceed to the estimate of the coefficients $d_{j}.$ They can be
computed by the partial fraction decomposition of the integrand in formula
(\ref{eqphidec}). The next result addresses this problem:

\begin{proposition}
Let $\mu_{0},\ldots,\mu_{s}$ and $\nu_{1},\ldots,\nu_{r}$ be distinct real
numbers, $n:=2r+s+1,$ and define
\[
Q_{n}\left(  z\right)  =\left(  z-\mu_{0}\right)  \cdots\left(  z-\mu
_{s}\right)  \left(  z-\nu_{1}\right)  ^{2}\cdots\left(  z-\nu_{r}\right)
^{2}.
\]
Then the coefficients $a_{j}$ and $c_{j}$ in the partial fraction
decomposition
\[
\frac{1}{Q_{n}\left(  z\right)  }=\sum_{j=0}^{s}\frac{a_{j}}{z-\mu_{j}}%
+\sum_{j=1}^{r}\frac{b_{j}}{z-\nu_{j}}+\sum_{j=1}^{r}\frac{c_{j}}{\left(
z-\nu_{j}\right)  ^{2}}%
\]
are non-zero and are given by
\[
a_{j_{0}}=\lim_{z\rightarrow\mu_{j_{0}}}\frac{z-\mu_{j_{0}}}{Q_{n}\left(
z\right)  }\text{ and }c_{j_{0}}=\lim_{z\rightarrow v_{j_{0}}}\frac{\left(
z-\nu_{j_{0}}\right)  ^{2}}{Q_{n}\left(  z\right)  }.
\]
If $\left\vert \nu_{j_{0}}-\mu_{j}\right\vert \geq2$ for all $j=1,\ldots s,$
and $\left\vert \nu_{j_{0}}-\nu_{j}\right\vert \geq2$ for all $j=1,\ldots,r$
with $j\neq j_{0}$ then
\[
\left\vert b_{j_{0}}\right\vert \leq\left(  n-1\right)  \left\vert c_{j_{0}%
}\right\vert .
\]

\end{proposition}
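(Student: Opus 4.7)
The plan is to obtain closed-form expressions for all three kinds of coefficients by the usual cover-up and logarithmic-derivative tricks, and then simply apply the hypothesis that $\nu_{j_0}$ is at least distance $2$ from every other pole. First I would verify the formulas for $a_{j_0}$ and $c_{j_0}$ in the standard way: multiplying both sides of the partial fraction identity by $(z-\mu_{j_0})$ and letting $z\to\mu_{j_0}$ kills every term except $a_{j_0}$ (this is the cover-up formula), and the analogous step with $(z-\nu_{j_0})^{2}$ and $z\to\nu_{j_0}$ isolates $c_{j_0}$. Non-vanishing of these limits follows immediately because the remaining factors $(\mu_{j_0}-\mu_j)$, $(\mu_{j_0}-\nu_j)$, $(\nu_{j_0}-\mu_j)$, $(\nu_{j_0}-\nu_j)$ that appear in the denominators are all non-zero by the distinctness hypothesis.

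For $b_{j_0}$ I would introduce the auxiliary function
\[
g(z):=\frac{(z-\nu_{j_0})^{2}}{Q_{n}(z)}=\frac{1}{\prod_{j=0}^{s}(z-\mu_{j})\,\prod_{j\neq j_{0}}(z-\nu_{j})^{2}},
\]
which is holomorphic in a neighbourhood of $\nu_{j_0}$. Multiplying the partial fraction identity through by $(z-\nu_{j_0})^{2}$ and Taylor-expanding at $\nu_{j_0}$, every contribution except the $b_{j_0}$ and $c_{j_0}$ terms vanishes to order at least $2$, while the $b_{j_0}$ term contributes $b_{j_0}(z-\nu_{j_0})$ and the $c_{j_0}$ term contributes the constant $c_{j_0}$. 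Matching coefficients of $(z-\nu_{j_0})$ gives the key identity $b_{j_0}=g'(\nu_{j_0})$, together with the already-known $c_{j_0}=g(\nu_{j_0})$.

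Now the logarithmic derivative of $g$ is
\[
\frac{g'(z)}{g(z)}=-\sum_{j=0}^{s}\frac{1}{z-\mu_{j}}\;-\;2\sum_{\substack{j=1\\ j\neq j_{0}}}^{r}\frac{1}{z-\nu_{j}},
\]
so evaluating at $z=\nu_{j_{0}}$ gives
\[
b_{j_{0}}=c_{j_{0}}\left[-\sum_{j=0}^{s}\frac{1}{\nu_{j_{0}}-\mu_{j}}-2\sum_{\substack{j=1\\ j\neq j_{0}}}^{r}\frac{1}{\nu_{j_{0}}-\nu_{j}}\right].
\]
Applying the hypothesis $|\nu_{j_0}-\mu_j|\geq 2$ and $|\nu_{j_0}-\nu_j|\geq 2$ bounds each summand by $1/2$, giving $|b_{j_{0}}|\leq |c_{j_{0}}|\bigl((s+1)/2+(r-1)\bigr)=|c_{j_{0}}|(n-2)/2$, since $n=2r+s+1$. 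This is comfortably bounded by $(n-1)|c_{j_{0}}|$, which is the claimed inequality.

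There is no real obstacle here: the only point that deserves care is justifying $b_{j_{0}}=g'(\nu_{j_{0}})$ from the partial fraction decomposition. I would present this via the Taylor expansion argument above rather than by differentiating the original identity, since differentiation would reintroduce singularities at the other poles and obscure the cancellation. The distance hypothesis is used only in the final estimation step and plays no role in the derivation of the formulas themselves.
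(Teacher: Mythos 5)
Your proposal is correct and follows essentially the same route as the paper: identify $b_{j_0}$ as the first Taylor coefficient (equivalently the residue) of $(z-\nu_{j_0})^{2}/Q_{n}(z)$ at $\nu_{j_0}$, express it through the logarithmic derivative, and bound each reciprocal distance by $1/2$; the paper works with $P_{j_0}=Q_n(z)/(z-\nu_{j_0})^{2}$ rather than its reciprocal $g$, but the computation is identical. Your final constant $(n-2)/2$ is in fact slightly sharper than the paper's $n/2$, and both are $\leq n-1$.
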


\begin{proof}
It is easy to see that $1=\lim_{z\rightarrow\mu_{j_{0}}}a_{j_{0}}Q_{n}\left(
z\right)  /\left(  z-\mu_{j_{0}}\right)  $ and $1=\lim_{z\rightarrow\nu
_{j_{0}}}c_{j_{0}}Q_{n}\left(  z\right)  /\left(  z-\nu_{j_{0}}\right)  ^{2}.$
Further $b_{j_{0}}$ can be computed by residue theory:
\begin{equation}
b_{j_{0}}=\text{res}_{\nu_{j_{0}}}\frac{1}{Q_{n}\left(  z\right)  }=\frac
{d}{dz}\frac{\left(  z-\nu_{j_{0}}\right)  ^{2}}{Q_{n}\left(  z\right)
}\left(  \nu_{j_{0}}\right)  . \label{eqres}%
\end{equation}
Let us define $P_{j_{0}}\left(  z\right)  =Q_{n}\left(  z\right)  /\left(
z-\nu_{j_{0}}\right)  ^{2}.$ Clearly
\[
P_{j_{0}}\left(  \nu_{j_{0}}\right)  =\lim_{z\rightarrow v_{j_{0}}}\frac
{Q_{n}\left(  z\right)  }{\left(  z-\nu_{j_{0}}\right)  ^{2}}=\frac
{1}{c_{j_{0}}}.
\]
Then (\ref{eqres}) is equivalent to
\[
b_{j_{0}}=\frac{d}{dz}\frac{1}{P_{j_{0}}}\left(  \nu_{j_{0}}\right)  =-\left[
P_{j_{0}}\left(  \nu_{j_{0}}\right)  \right]  ^{-2}P_{j_{0}}^{\prime}\left(
\nu_{j_{0}}\right)  =-\frac{1}{P_{j_{0}}\left(  \nu_{j_{0}}\right)  }%
\frac{P_{j_{0}}^{\prime}\left(  \nu_{j_{0}}\right)  }{P_{j_{0}}\left(
\nu_{j_{0}}\right)  }.
\]
Moreover
\[
\frac{P_{j_{0}}^{\prime}\left(  \nu_{j_{0}}\right)  }{P_{j_{0}}\left(
\nu_{j_{0}}\right)  }=\sum_{j=0}^{s}\frac{1}{\nu_{j_{0}}-\mu_{j}}+2\sum_{j\neq
j_{0}}^{r}\frac{1}{\nu_{j_{0}}-\nu_{j}}.
\]
Since $\left\vert \nu_{j_{0}}-\mu_{j}\right\vert \geq2$ and $\left\vert
\nu_{j_{0}}-\nu_{j}\right\vert \geq2,$ we see that
\[
\left\vert \frac{P_{j_{0}}^{\prime}\left(  \nu_{j_{0}}\right)  }{P_{j_{0}%
}\left(  \nu_{j_{0}}\right)  }\right\vert \leq\frac{1}{2}\left(  s+1\right)
+r=\frac{1}{2}\left(  s+1+2r\right)  =\frac{1}{2}n\leq n-1.
\]
It follows that
\[
\left\vert b_{j_{0}}\right\vert \leq\frac{n-1}{\left\vert P_{j_{0}}\left(
\nu_{j_{0}}\right)  \right\vert }=\left(  n-1\right)  \left\vert c_{j_{0}%
}\right\vert .
\]
The proof is complete.
\end{proof}

We can order the real and distinct numbers $\mu_{0},\ldots,\mu_{s},\nu
_{1},\ldots,\nu_{r}$ according to their values, say $\lambda_{0}%
<\cdots<\lambda_{s+r}.$ Clearly $\left\vert \lambda_{j}-\lambda_{k}\right\vert
\geq\alpha:=2$ for all $k\neq j.$ The exponents $\lambda_{j}$ have either
multiplicity $m_{j}=1$ or $m_{j}=2,$ and $m_{0}+\cdots+m_{s+r}=n.$ Then
\[
\lim_{z\rightarrow\lambda_{j}}\frac{Q_{n}\left(  z\right)  }{\left(
z-\lambda_{j}\right)  ^{m_{j}}}=\left(  \lambda_{j}-\lambda_{0}\right)
^{m_{0}}\cdots\left(  \lambda_{j}-\lambda_{j-1}\right)  ^{m_{j-1}}\left(
\lambda_{j}-\lambda_{j+1}\right)  ^{m_{j+1}}\cdots\left(  \lambda_{j}%
-\lambda_{s+r}\right)  ^{m_{s+r}}.
\]
The proof of Proposition \ref{Propqn} shows that $\lambda_{k+l}-\lambda
_{k}\geq l\cdot\alpha.$ Thus
\[
\lim_{z\rightarrow\lambda_{j}}\left\vert \frac{Q_{n}\left(  z\right)
}{\left(  z-\lambda_{j}\right)  ^{m_{j}}}\right\vert \geq\alpha^{m_{0}%
}j^{m_{0}}\cdots\alpha^{m_{j-1}}1^{m_{j-1}}\cdot\alpha^{m_{j+1}}1^{m_{j+1}%
}\cdots\alpha^{m_{s+r}}\left(  s+r-j\right)  ^{m_{s+r}}=:A_{j}.
\]
The factor $\alpha=2$ occurs $n-m_{j}$ times in the integer $A_{j},$ which is
greater or equal than $n-2.$ Since $m_{l}\geq1$ for $l=0,1,\ldots,s+r,$ we
have clearly a factor
\[
j!\left(  s+r-j\right)  !=j!\left(  n-r-1-j\right)  !\leq j!\left(
n-r-j\right)  !
\]
in the expression $A_{j}$. Further $m_{l}=2$ for at least $r-1$ different
factors in $A_{j}$ which are non-zero integers; so the product of these number
is bigger or equal to $l!\left(  r-1-l\right)  !$ for some natural number
$l\in\left\{  1,\ldots,r-1\right\}  .$ Thus we conclude that
\[
\left\vert A_{j}\right\vert \geq2^{n-2}j!\left(  n-r-j\right)  !l!\left(
r-1-l\right)  !=2^{n-2}\frac{\left(  n-r\right)  !}{\binom{n-r}{j}}%
\frac{\left(  r-1\right)  !}{\binom{r-1}{l}}.
\]
Since $\binom{n-r}{j}\leq2^{n-r}$ and $\binom{r-1}{l}\leq2^{r-1},$ we obtain
\[
\left\vert A_{j}\right\vert \geq2^{-1}\left(  n-r\right)  !\left(  r-1\right)
!=2^{-1}\frac{\left(  n-1\right)  !}{\binom{n-1}{r-1}}\leq\frac{\left(
n-1\right)  !}{2^{n}}.
\]
It follows that $\left\vert a_{j}\right\vert \leq2^{n}/\left(  n-1\right)  !$
and $\left\vert c_{j}\right\vert \leq2^{n}/\left(  n-1\right)  !$. Further
$\left\vert b_{j}\right\vert \leq\left(  n-1\right)  \left\vert c_{j}%
\right\vert $ and we conclude that $\left\vert d_{j}\right\vert \leq
2^{n}/\left(  n-2\right)  !.$

\section{Appendix: Estimate of derivatives of odd order}

In this section we collect and prove some results about linear differential
operators which are needed in the paper. The following version of Rolle's
theorem is well known and the short proof is included for convenience of the reader.

\begin{theorem}
[Rolle's Theorem]\label{ThmRolle} Let $f:\left[  a,b\right]  \rightarrow
\mathbb{R}$ be continuous and differentiable on $\left(  a,b\right)  .$ For
$\lambda\in\mathbb{R}$ define the differential operator $D_{\lambda
}f:=df/dx-\lambda f.$ If $e^{-\lambda a}f\left(  a\right)  =e^{-\lambda
b}f\left(  b\right)  ,$ then there exists $\xi\in\left(  a,b\right)  $ with
$D_{\lambda}f\left(  \xi\right)  =0.$
\end{theorem}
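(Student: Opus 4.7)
The plan is to reduce this to the classical Rolle's theorem by multiplying $f$ by a suitable exponential integrating factor. Specifically, I would introduce the auxiliary function
\[
g(x) := e^{-\lambda x} f(x),
\]
which is continuous on $[a,b]$ and differentiable on $(a,b)$ since $f$ is.

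The key computation is the product rule: $g'(x) = -\lambda e^{-\lambda x} f(x) + e^{-\lambda x} f'(x) = e^{-\lambda x}\bigl(f'(x) - \lambda f(x)\bigr) = e^{-\lambda x} D_\lambda f(x)$. The hypothesis $e^{-\lambda a} f(a) = e^{-\lambda b} f(b)$ is precisely the statement $g(a) = g(b)$. Hence, by the classical Rolle's theorem applied to $g$, there exists $\xi \in (a,b)$ such that $g'(\xi) = 0$, which by the above identity gives $e^{-\lambda \xi} D_\lambda f(\xi) = 0$. Since $e^{-\lambda \xi} \neq 0$, we conclude $D_\lambda f(\xi) = 0$.

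There is no real obstacle here; the only thing to verify carefully is that the integrating factor $e^{-\lambda x}$ is the correct one to convert the operator $D_\lambda = d/dx - \lambda$ into a plain derivative, which is standard for first-order linear ODEs. The proof is essentially one line once the function $g$ is written down.
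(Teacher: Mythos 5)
Your proof is correct and is essentially identical to the paper's: both define $g(x)=e^{-\lambda x}f(x)$, observe $g(a)=g(b)$, apply the classical Rolle theorem, and use $g'(x)=e^{-\lambda x}D_{\lambda}f(x)$ to conclude. Nothing to add.
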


\begin{proof}
Define $g\left(  x\right)  :=e^{-\lambda x}f\left(  x\right)  .$ Then
$g\left(  a\right)  =g\left(  b\right)  .$ By the theorem of Rolle there
exists $\xi\in\left(  a,b\right)  $ with $g^{\prime}\left(  \xi\right)  =0.$
Now $g^{\prime}\left(  x\right)  =f^{\prime}\left(  x\right)  e^{-\lambda
x}-\lambda f\left(  x\right)  e^{-\lambda x}=e^{-\lambda x}D_{\lambda}f\left(
x\right)  .$ So $g^{\prime}\left(  \xi\right)  =0$ implies that $D_{\lambda
}f\left(  \xi\right)  =0.$
\end{proof}

The following result is an analog of the mean value theorem which is indeed a
consequence when we let $\lambda$ go to $0.$

\begin{theorem}
\label{ThmIntermed}Let $f:\left[  a,b\right]  \rightarrow\mathbb{R}$ be
continuous on $\left[  a,b\right]  $ and differentiable on $\left(
a,b\right)  ,$ and let $\lambda\neq0$ be a real number. Then there exists
$\xi\in\left(  a,b\right)  $ with
\[
\frac{e^{\lambda a}f\left(  b\right)  -e^{\lambda b}f\left(  a\right)
}{e^{\lambda b}-e^{\lambda a}}\lambda=D_{\lambda}f\,\left(  \xi\right)  .
\]

\end{theorem}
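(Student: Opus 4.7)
The plan is to mimic the classical derivation of the mean value theorem from Rolle's theorem, using Theorem \ref{ThmRolle} in place of the ordinary version of Rolle's theorem. Concretely, I will construct an auxiliary function $h(x)=f(x)+c$ (just $f$ shifted by an appropriate constant $c$) chosen so that the twisted boundary condition $e^{-\lambda a}h(a)=e^{-\lambda b}h(b)$ required by Theorem \ref{ThmRolle} is satisfied. Since the operator $D_\lambda$ acts on a constant by $D_\lambda c=-\lambda c$, one has $D_\lambda h = D_\lambda f - \lambda c$, so the conclusion $D_\lambda h(\xi)=0$ will immediately translate into an identity of the form $D_\lambda f(\xi)=\lambda c$, and it remains only to check that the value of $c$ forced by the boundary condition produces exactly the quotient appearing in the statement.

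First I would solve the linear equation
\[
e^{-\lambda a}\bigl(f(a)+c\bigr)=e^{-\lambda b}\bigl(f(b)+c\bigr)
\]
for $c$. This gives
\[
c\bigl(e^{-\lambda a}-e^{-\lambda b}\bigr)=e^{-\lambda b}f(b)-e^{-\lambda a}f(a),
\]
and after multiplying numerator and denominator by $e^{\lambda(a+b)}$ one obtains
\[
c=\frac{e^{\lambda a}f(b)-e^{\lambda b}f(a)}{e^{\lambda b}-e^{\lambda a}}.
\]
Note this is legitimate since $\lambda\neq 0$ and $a<b$ force $e^{\lambda a}\neq e^{\lambda b}$, so the denominator is nonzero.

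Next I would apply Theorem \ref{ThmRolle} to the function $h(x):=f(x)+c$, which is continuous on $[a,b]$, differentiable on $(a,b)$, and satisfies $e^{-\lambda a}h(a)=e^{-\lambda b}h(b)$ by construction. This yields a point $\xi\in(a,b)$ with $D_\lambda h(\xi)=0$. Since $D_\lambda h(x)=D_\lambda f(x)-\lambda c$, the conclusion $D_\lambda h(\xi)=0$ is exactly
\[
D_\lambda f(\xi)=\lambda c=\frac{e^{\lambda a}f(b)-e^{\lambda b}f(a)}{e^{\lambda b}-e^{\lambda a}}\,\lambda,
\]
which is the desired equality.

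There is essentially no obstacle here: the only subtlety is choosing the additive correction rather than, say, a multiplicative one, and verifying that the resulting formula for $c$ coincides with the stated quotient. The hypothesis $\lambda\neq 0$ is used precisely to ensure both the solvability of the boundary condition (denominator nonzero) and the meaningfulness of dividing by $\lambda$ in the intermediate step. Letting $\lambda\to 0$ in the final formula and applying L'Hospital's rule recovers the classical mean value theorem $f(b)-f(a)=(b-a)f'(\xi)$, confirming the consistency of the formulation.
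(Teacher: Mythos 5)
Your proof is correct. It follows the same overall strategy as the paper --- manufacture an auxiliary function satisfying the twisted boundary condition $e^{-\lambda a}h(a)=e^{-\lambda b}h(b)$ and invoke Theorem \ref{ThmRolle} --- but the auxiliary function itself is genuinely different. The paper imitates the classical proof of the mean value theorem: it subtracts a suitable multiple of $e^{\lambda x}-e^{\lambda a}$, i.e.\ it corrects $f$ by an element built from the kernel of $D_\lambda$, and then must compute $D_\lambda\bigl(e^{\lambda x}-e^{\lambda a}\bigr)=\lambda e^{\lambda a}$ to extract the conclusion. You instead shift by a constant $c$, exploiting the fact that for $\lambda\neq 0$ constants are \emph{not} annihilated by $D_\lambda$ (indeed $D_\lambda c=-\lambda c$), so the boundary condition becomes a single linear equation in $c$ and the conclusion $D_\lambda f(\xi)=\lambda c$ drops out with no further computation. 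Your version is shorter and the algebra is cleaner; its one limitation is that it degenerates at $\lambda=0$ (a constant shift does nothing to $f'$), so unlike the paper's auxiliary function it does not specialize to the classical Rolle-to-MVT argument --- but since the theorem assumes $\lambda\neq 0$, this costs nothing here. All the details check out: the denominator $e^{-\lambda a}-e^{-\lambda b}$ is nonzero precisely because $\lambda\neq0$ and $a<b$, and multiplying numerator and denominator by $e^{\lambda(a+b)}$ does yield
\[
c=\frac{e^{\lambda a}f(b)-e^{\lambda b}f(a)}{e^{\lambda b}-e^{\lambda a}},
\]
so $\lambda c$ is exactly the quotient in the statement.
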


\begin{proof}
Define for $\lambda\neq0$ a function $\psi$ by
\[
\psi\left(  x\right)  =f\left(  x\right)  -\frac{f\left(  b\right)
-e^{\lambda\left(  b-a\right)  }f\left(  a\right)  }{e^{\lambda b}-e^{\lambda
a}}\left(  e^{\lambda x}-e^{\lambda a}\right)  .
\]
Then $\psi\left(  a\right)  =f\left(  a\right)  $ and $\psi\left(  b\right)
=e^{\lambda\left(  b-a\right)  }f\left(  a\right)  ,$ so $e^{-\lambda b}%
\psi\left(  b\right)  =e^{-\lambda a}f\left(  a\right)  =e^{-\lambda a}%
\psi\left(  a\right)  .$ By Theorem \ref{ThmRolle} there exists $\xi\in\left(
a,b\right)  $ with $D_{\lambda}\psi\left(  \xi\right)  =0.$ Note that
$D_{\lambda}\left(  e^{\lambda x}-e^{\lambda a}\right)  =\lambda e^{\lambda
x}-\lambda\left(  e^{\lambda x}-e^{\lambda a}\right)  =\lambda e^{\lambda a},$
so we have
\[
0=D_{\lambda}\psi\left(  \xi\right)  =D_{\lambda}f\left(  \xi\right)
-\frac{f\left(  b\right)  -e^{\lambda\left(  b-a\right)  }f\left(  a\right)
}{e^{\lambda b}-e^{\lambda a}}\lambda e^{\lambda a}.
\]
This gives the statement in the theorem.
\end{proof}

We need the following simple lemma.

\begin{lemma}
\label{Lem31}Let $\lambda\neq0$ be a real number and $a<b.$ Then
\[
\left|  \lambda\frac{e^{\lambda a}+e^{\lambda b}}{e^{\lambda a}-e^{\lambda b}%
}\right|  \leq2\frac{e^{\left|  \lambda\right|  \left(  b-a\right)  }}{b-a}%
\]

\end{lemma}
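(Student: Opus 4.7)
The plan is to reduce the two-variable inequality to a single-variable one by extracting $e^{\lambda a}$ and a suitable scaling, and then prove the resulting scalar inequality by an elementary Taylor-series estimate.

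First, I would factor $e^{\lambda a}$ out of both the numerator and denominator of the fraction on the left-hand side, writing
\[
\lambda\,\frac{e^{\lambda a}+e^{\lambda b}}{e^{\lambda a}-e^{\lambda b}}=\lambda\,\frac{1+e^{s}}{1-e^{s}},\qquad s:=\lambda(b-a).
\]
After multiplying both sides of the proposed inequality by $b-a>0$, the claim becomes
\[
|s|\,\frac{1+e^{s}}{|1-e^{s}|}\leq 2e^{|s|}.
\]
Next I would observe the symmetry $s\leftrightarrow -s$: multiplying numerator and denominator of $(1+e^{-s})/(1-e^{-s})$ by $e^{s}$ gives $(e^{s}+1)/(e^{s}-1)$, so the expression depends only on $|s|$ and we may assume $s>0$. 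The task reduces to proving
\[
s(1+e^{s})\leq 2e^{s}(e^{s}-1)\qquad (s>0),
\]
or equivalently, after rearranging,
\[
s+(s+2)e^{s}\leq 2e^{2s}.
\]

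The second step is the elementary estimate. From $e^{s}\geq 1+s+s^{2}/2$ for $s\geq 0$ we get
\[
2e^{s}-s-2\;\geq\; s^{2}+s\;\geq\; s,
\]
and multiplying by $e^{s}\geq 1$ yields
\[
e^{s}(2e^{s}-s-2)\geq s,
\]
which is exactly $2e^{2s}-(s+2)e^{s}\geq s$, the rearranged form of the desired inequality. Tracing back through the normalization completes the proof.

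There is no real obstacle: the only subtlety is choosing the substitution $s=\lambda(b-a)$ and recognizing the sign symmetry, after which the problem collapses to a one-variable calculus inequality that is handled by the quadratic lower bound on $e^{s}$.
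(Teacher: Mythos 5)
Your proof is correct, and at its core it follows the same route as the paper: cancel the common factor $e^{\lambda a}$, substitute $s=\lambda(b-a)$, and reduce the claim to an elementary scalar inequality for the exponential. The differences are organizational. Where you invoke the symmetry $s\leftrightarrow -s$ to assume $s>0$, the paper simply treats the cases $\lambda>0$ and $\lambda<0$ separately; and where you rearrange to $s+(s+2)e^{s}\leq 2e^{2s}$ and deduce it from the quadratic bound $e^{s}\geq 1+s+s^{2}/2$, the paper gets the equivalent inequality $s(1+e^{s})\leq 2e^{s}(e^{s}-1)$ more directly from the two one-line facts $1+e^{s}\leq 2e^{s}$ and $e^{s}-1\geq s$. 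Your symmetry observation is a small economy over the paper's case split; your final estimate is a small detour compared with the paper's, since the linear Taylor bound already suffices. Either way, every step you wrote checks out.
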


\begin{proof}
Recall that $e^{x}-1\geq x$ for all $x\geq0.$ In the first case suppose that
$\lambda>0$. Then $\lambda\left(  b-a\right)  \geq0$ and
\[
e^{\lambda b}-e^{\lambda a}=e^{\lambda a}\left(  e^{\lambda\left(  b-a\right)
}-1\right)  \geq e^{\lambda a}\lambda\left(  b-a\right)  .
\]
Further $e^{\lambda a}\leq e^{\lambda b}$ since $\lambda>0$. Hence
\[
\lambda\frac{e^{\lambda a}+e^{\lambda b}}{e^{\lambda b}-e^{\lambda a}}%
\leq\lambda\frac{2e^{\lambda b}}{e^{\lambda a}\lambda\left(  b-a\right)
}=2\frac{e^{\lambda\left(  b-a\right)  }}{b-a}.
\]

For $\lambda<0$ we argue very similarly: $e^{\lambda a}-e^{\lambda
b}=e^{\lambda b}\left(  e^{-\lambda\left(  b-a\right)  }-1\right)  \geq
e^{\lambda b}\left|  \lambda\right|  \left(  b-a\right)  .$ Further
$e^{\lambda b}\leq e^{\lambda a}$ since $\lambda<0$ and
\[
\frac{e^{\lambda a}+e^{\lambda b}}{e^{\lambda a}-e^{\lambda b}}\leq
\frac{2e^{\lambda a}}{e^{\lambda b}\left|  \lambda\right|  \left(  b-a\right)
}=2\frac{e^{\lambda\left(  a-b\right)  }}{\left|  \lambda\right|  \left(
b-a\right)  }=2\frac{e^{\left|  \lambda\right|  \left(  b-a\right)  }}{\left|
\lambda\right|  \left(  b-a\right)  }.
\]

\end{proof}

Next we want to give an estimate of first derivative $D_{\lambda_{0}}f$ in
terms of $f$ and the second derivative $D_{\lambda_{0}\lambda_{1}}f.$ For the
case $\lambda_{0}=\lambda_{1}=0$ this is a well-known result (see
\cite[Theorem 2.4]{Schu81}) and its extension to exponential polynomials is
not difficult:

\begin{theorem}
\label{Thmoddderiv}Let $\lambda_{0},\lambda_{1}$ be real numbers and
$f:\left[  a,b\right]  \rightarrow\mathbb{C}$ be twice continuously
differentiable. Then the following estimate holds:
\[
\left|  D_{\lambda_{0}}f\left(  a\right)  \right|  \leq4\frac{e^{\left(
\left|  \lambda_{0}\right|  +\left|  \lambda_{1}\right|  \right)  \left(
b-a\right)  }}{b-a}\max\left\{  \left|  f\left(  a\right)  \right|  ,\left|
f\left(  b\right)  \right|  \right\}  +2\max_{t\in\left[  a,b\right]  }\left|
D_{\lambda_{1}}D_{\lambda_{0}}f\left(  t\right)  \right|  \left(  b-a\right)
e^{\left|  \lambda_{1}\right|  \left(  b-a\right)  }.
\]

\end{theorem}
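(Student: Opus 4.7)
The plan is to apply the intermediate-value-type result of Theorem \ref{ThmIntermed} together with the integrating factor trick. Set $g := D_{\lambda_{0}} f$, so that what we want to estimate is $|g(a)|$, and note that $D_{\lambda_{1}} g = D_{\lambda_{1}} D_{\lambda_{0}} f$.

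First I would handle the generic case $\lambda_{0} \neq 0$. By Theorem \ref{ThmIntermed} applied to $f$ with parameter $\lambda_{0}$, there exists $\xi \in (a,b)$ with
\[
g(\xi) = D_{\lambda_{0}} f(\xi) = \lambda_{0}\,\frac{e^{\lambda_{0}a}f(b)-e^{\lambda_{0}b}f(a)}{e^{\lambda_{0}b}-e^{\lambda_{0}a}}.
\]
Bounding the numerator by $(e^{\lambda_{0}a}+e^{\lambda_{0}b})\max\{|f(a)|,|f(b)|\}$ and invoking Lemma \ref{Lem31}, one obtains the intermediate bound
\[
|g(\xi)| \le 2\,\frac{e^{|\lambda_{0}|(b-a)}}{b-a}\,\max\{|f(a)|,|f(b)|\}.
\]
The degenerate case $\lambda_{0}=0$ is the standard mean value theorem and gives the same bound (indeed a slightly sharper one).

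Next, to transfer this pointwise information at $\xi$ to the endpoint $a$, I would use that $g$ satisfies the inhomogeneous first order equation $g' - \lambda_{1}g = D_{\lambda_{1}}g$. Multiplying by the integrating factor $e^{-\lambda_{1}t}$ and integrating from $a$ to $\xi$ gives
\[
e^{-\lambda_{1}\xi}g(\xi) - e^{-\lambda_{1}a}g(a) = \int_{a}^{\xi} e^{-\lambda_{1}t}\,D_{\lambda_{1}}D_{\lambda_{0}}f(t)\,dt,
\]
so that
\[
g(a) = e^{\lambda_{1}(a-\xi)}g(\xi) - e^{\lambda_{1}a}\int_{a}^{\xi} e^{-\lambda_{1}t}\,D_{\lambda_{1}}D_{\lambda_{0}}f(t)\,dt.
\]
Since $|a-\xi|\le b-a$ and $|a-t|\le b-a$ throughout, the exponential factors are each bounded by $e^{|\lambda_{1}|(b-a)}$, and the integral length is at most $b-a$.

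Combining these estimates with the bound on $|g(\xi)|$ yields
\[
|g(a)| \le 2\,\frac{e^{(|\lambda_{0}|+|\lambda_{1}|)(b-a)}}{b-a}\max\{|f(a)|,|f(b)|\} + (b-a)\,e^{|\lambda_{1}|(b-a)}\max_{t\in[a,b]}|D_{\lambda_{1}}D_{\lambda_{0}}f(t)|,
\]
which is in fact slightly stronger than the asserted inequality (the stated constants $4$ and $2$ absorb any slack needed to treat the borderline cases $\lambda_{j}=0$ uniformly). The only mildly delicate point is keeping signs straight in the $e^{\lambda_{1}(a-\xi)}$ factor since $\xi-a$ can have either sign relative to $\lambda_{1}$, but this is immediate once one replaces everything by $|\lambda_{1}|(b-a)$ in the exponent.
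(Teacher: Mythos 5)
Your overall strategy is sound and your second step is actually a nice simplification of the paper's argument: instead of applying the mean-value result of Theorem \ref{ThmIntermed} a second time to $D_{\lambda_{0}}f$ on $[a,\xi]$, you integrate the identity $\frac{d}{dt}\bigl(e^{-\lambda_{1}t}g(t)\bigr)=e^{-\lambda_{1}t}D_{\lambda_{1}}g(t)$, which is valid for complex-valued $g$, handles $\lambda_{1}=0$ with no separate continuity argument, and produces the same quantity $(b-a)e^{\left\vert \lambda_{1}\right\vert (b-a)}$ that the paper extracts from the bound on $\bigl\vert (e^{\lambda_{1}(a-\xi)}-1)/\lambda_{1}\bigr\vert$.

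There is, however, a genuine gap in your first step. Theorem \ref{ThmIntermed} is a Rolle-type statement and is proved (and stated) only for \emph{real-valued} $f$; it fails for complex-valued functions (e.g.\ $f(t)=e^{it}$ on $[0,2\pi]$ has equal endpoint values but nonvanishing derivative). Since the theorem you are proving allows $f:[a,b]\rightarrow\mathbb{C}$, you cannot invoke Theorem \ref{ThmIntermed} for $f$ directly, and so the point $\xi$ with the claimed bound on $\left\vert D_{\lambda_{0}}f(\xi)\right\vert$ need not exist. The standard repair — and the one the paper uses — is to write $f=f_{1}+if_{2}$ with $f_{1},f_{2}$ real, run the whole argument for each $f_{j}$ (obtaining possibly different points $\xi^{(1)},\xi^{(2)}$), use $\left\vert f_{j}\right\vert \leq\left\vert f\right\vert$ and $\left\vert D_{\lambda_{1}}D_{\lambda_{0}}f_{j}\right\vert \leq\left\vert D_{\lambda_{1}}D_{\lambda_{0}}f\right\vert$, and add. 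This doubles your constants from $(2,1)$ to exactly the stated $(4,2)$. Consequently your closing remark is off on two counts: you have not actually established the "slightly stronger" inequality for complex $f$, and the slack in the constants $4$ and $2$ is consumed by the real/imaginary decomposition, not by the borderline cases $\lambda_{j}=0$ (which cost nothing, being handled by continuity or, in your version, automatically).
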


\begin{proof}
Assume at first that $\lambda_{0}$ and $\lambda_{1}$ are not zero and that $f$
is real-valued. We apply Theorem \ref{ThmIntermed} to $f$ and $\lambda
=\lambda_{0},$ so there exists $\xi_{1}\in\left(  a,b\right)  $ such that
\[
\frac{e^{\lambda_{0}a}f\left(  b\right)  -e^{\lambda_{0}b}f\left(  a\right)
}{e^{\lambda_{0}b}-e^{\lambda_{0}a}}\lambda_{0}=D_{\lambda_{0}}f\,\left(
\xi_{1}\right)  .
\]
Then Lemma \ref{Lem31} yields the estimate
\begin{align*}
\left\vert D_{\lambda_{0}}f\,\left(  \xi_{1}\right)  \right\vert  &  \leq
\frac{e^{\lambda_{0}a}+e^{\lambda_{0}b}}{\left\vert e^{\lambda_{0}%
b}-e^{\lambda_{0}a}\right\vert }\left\vert \lambda_{0}\right\vert \max\left\{
\left\vert f\left(  a\right)  \right\vert ,\left\vert f\left(  b\right)
\right\vert \right\} \\
&  \leq2\frac{e^{\left\vert \lambda_{0}\right\vert \left(  b-a\right)  }}%
{b-a}\max\left\{  \left\vert f\left(  a\right)  \right\vert ,\left\vert
f\left(  b\right)  \right\vert \right\}  .
\end{align*}
Now apply Theorem \ref{ThmIntermed} to the interval $\left[  a,\xi_{1}\right]
$ for $\lambda=\lambda_{1}$ and the function $D_{\lambda_{0}}f.$ Then there
exists $\xi_{2}\in\left(  a,\xi_{1}\right)  \subset\left(  a,b\right)  $ such
that
\[
\frac{e^{\lambda_{1}a}D_{\lambda_{0}}f\left(  \xi_{1}\right)  -e^{\lambda
_{1}\xi_{1}}D_{\lambda_{0}}f\left(  a\right)  }{e^{\lambda_{1}\xi_{1}%
}-e^{\lambda_{1}a}}\lambda_{1}=D_{\lambda_{1}}D_{\lambda_{0}}f\left(  \xi
_{2}\right)  .
\]
Thus $e^{\lambda_{1}\xi_{1}}D_{\lambda_{0}}f\left(  a\right)  =e^{\lambda
_{1}a}D_{\lambda_{0}}f\left(  \xi_{1}\right)  -\left(  e^{\lambda_{1}\xi_{1}%
}-e^{\lambda_{1}a}\right)  /\lambda_{1}\times D_{\lambda_{1}}D_{\lambda_{0}%
}f\left(  \xi_{2}\right)  $ and
\[
\left\vert D_{\lambda_{0}}f\left(  a\right)  \right\vert \leq e^{\lambda
_{1}\left(  a-\xi_{1}\right)  }\left\vert D_{\lambda_{0}}f\left(  \xi
_{1}\right)  \right\vert +\left\vert D_{\lambda_{1}}D_{\lambda_{0}}f\left(
\xi_{2}\right)  \right\vert \frac{\left\vert 1-e^{\lambda_{1}\left(  a-\xi
_{1}\right)  }\right\vert }{\left\vert \lambda_{1}\right\vert }.
\]
We estimate
\begin{align*}
\left\vert \frac{e^{\lambda_{1}\left(  a-\xi_{1}\right)  }-1}{\lambda_{1}%
}\right\vert  &  \leq\sum_{n=1}^{\infty}\frac{\left\vert \lambda_{1}\left(
a-\xi_{1}\right)  \right\vert ^{n}}{\left\vert \lambda_{1}\right\vert n!}%
\leq\sum_{n=1}^{\infty}\frac{\left\vert \lambda_{1}\right\vert ^{n-1}}%
{n!}\left(  b-a\right)  ^{n}\\
&  \leq\left(  b-a\right)  \sum_{n=1}^{\infty}\frac{\left\vert \lambda
_{1}\right\vert ^{n-1}}{\left(  n-1\right)  !}\left(  b-a\right)
^{n-1}=\left(  b-a\right)  e^{\left\vert \lambda_{1}\right\vert \left(
b-a\right)  }.
\end{align*}
Thus
\[
\left\vert D_{\lambda_{0}}f\left(  a\right)  \right\vert \leq2e^{\left\vert
\lambda_{1}\right\vert \left(  b-a\right)  }\frac{e^{\left\vert \lambda
_{0}\right\vert \left(  b-a\right)  }}{b-a}\max\left\{  \left\vert f\left(
a\right)  \right\vert ,\left\vert f\left(  b\right)  \right\vert \right\}
+\left\vert D_{\lambda_{1}}D_{\lambda_{0}}f\left(  \xi_{2}\right)  \right\vert
\left(  b-a\right)  e^{\left\vert \lambda_{1}\right\vert \left(  b-a\right)
}.
\]
Next consider the case that $f$ is complex-valued. Decompose $f$ into its real
and imaginary part, say $f=f_{1}+if_{2}$ with real-valued functions
$f_{1},f_{2}.$ Then
\begin{equation}
\left\vert D_{\lambda_{0}}f\left(  a\right)  \right\vert \leq\left\vert
D_{\lambda_{0}}f_{1}\left(  a\right)  \right\vert +\left\vert D_{\lambda_{0}%
}f_{2}\left(  a\right)  \right\vert . \label{eqtwos}%
\end{equation}
Now estimate each summand in (\ref{eqtwos}) as above. Since $\left\vert
f_{j}\left(  t\right)  \right\vert \leq\left\vert f\left(  t\right)
\right\vert $ and $\left\vert D_{\lambda_{1}}D_{\lambda_{0}}f_{j}\left(
t\right)  \right\vert \leq\left\vert D_{\lambda_{1}}D_{\lambda_{0}}f\left(
t\right)  \right\vert $ (note that $D_{\lambda_{1}}D_{\lambda_{0}}f_{1}$ is
the real part of $D_{\lambda_{1}}D_{\lambda_{0}}f),$ we obtain the estimate
with the factor $2.$ The case of arbitrary real numbers $\lambda_{0}$ and
$\lambda_{1}$ follows from a continuity argument.
\end{proof}

\begin{theorem}
\label{ThmApp}Let $\lambda_{m},m\in\mathbb{N}_{0}$ be real numbers and $f\in
C^{\infty}\left[  x_{0},x_{0}+\gamma\right]  $ with $\gamma>0.$ Assume that
$0<2\delta<\gamma.$ Then the estimate
\begin{align*}
\left\vert D^{\left(  2m+1\right)  }f\left(  x\right)  \right\vert  &
\leq2\max\left\{  \frac{2}{\delta},\delta\right\}  e^{\left(  \left\vert
\lambda_{2m}\right\vert +\left\vert \lambda_{2m+1}\right\vert \right)  \delta
}\\
&  \left(  \max_{t\in\left[  x_{0},x_{0}+2\delta\right]  }\left\vert
D^{\left(  2m\right)  }f\left(  t\right)  \right\vert +\max_{t\in\left[
x_{0},x_{0}+2\delta\right]  }\left\vert D^{\left(  2m+2\right)  }f\left(
t\right)  \right\vert \right)
\end{align*}
holds for all $x\in\left[  x_{0},x_{0}+\delta\right]  .$
\end{theorem}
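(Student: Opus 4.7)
The plan is to reduce Theorem \ref{ThmApp} to a single application of Theorem \ref{Thmoddderiv}, using the factorization of the odd-order operator in terms of neighboring even-order operators. Setting $g(t) := D^{(2m)}f(t)$, we simply observe that
\[
D^{(2m+1)}f(t) = D_{\lambda_{2m}} g(t), \qquad D^{(2m+2)}f(t) = D_{\lambda_{2m+1}} D_{\lambda_{2m}} g(t),
\]
so that the odd-order derivative at a point is exactly a first $D_\lambda$-derivative of $g$, and we control $g$ and its second $D_\lambda$-derivative by hypothesis.

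Next, I would fix an arbitrary $x \in [x_0, x_0+\delta]$ and apply Theorem \ref{Thmoddderiv} to $g$ on the subinterval $[a,b] := [x, x+\delta]$, with the exponents $\lambda_0 := \lambda_{2m}$ and $\lambda_1 := \lambda_{2m+1}$. Since $b-a = \delta$ and $[x,x+\delta] \subseteq [x_0, x_0+2\delta]$ (using $2\delta < \gamma$ to ensure the containment in the domain of $f$), the conclusion of Theorem \ref{Thmoddderiv} evaluated at the left endpoint $a=x$ reads
\[
\bigl|D^{(2m+1)}f(x)\bigr| \leq \frac{4\,e^{(|\lambda_{2m}|+|\lambda_{2m+1}|)\delta}}{\delta}\max\{|g(x)|,|g(x+\delta)|\} + 2\delta\, e^{|\lambda_{2m+1}|\delta}\!\!\!\max_{t\in[x,x+\delta]}\bigl|D^{(2m+2)}f(t)\bigr|.
\]
Bounding the two maxima by the corresponding maxima over the larger interval $[x_0, x_0+2\delta]$, we obtain the estimate with coefficients $4/\delta$ on the $D^{(2m)}f$-term and $2\delta$ on the $D^{(2m+2)}f$-term (with the appropriate exponential factors).

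The final step is purely bookkeeping: since $e^{|\lambda_{2m+1}|\delta} \leq e^{(|\lambda_{2m}|+|\lambda_{2m+1}|)\delta}$, both terms can be written with the common exponential factor $e^{(|\lambda_{2m}|+|\lambda_{2m+1}|)\delta}$. The first coefficient is $4/\delta = 2\cdot(2/\delta)$ and the second is $2\delta$; replacing each of them by $2\max\{2/\delta,\delta\}$ only makes the inequality weaker, so we arrive at
\[
\bigl|D^{(2m+1)}f(x)\bigr| \leq 2\max\!\left\{\tfrac{2}{\delta},\delta\right\} e^{(|\lambda_{2m}|+|\lambda_{2m+1}|)\delta}\!\left(\max_{t\in[x_0,x_0+2\delta]}\!\!\bigl|D^{(2m)}f(t)\bigr| + \max_{t\in[x_0,x_0+2\delta]}\!\!\bigl|D^{(2m+2)}f(t)\bigr|\right),
\]
which is exactly the claim. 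The only real work has already been done in Theorem \ref{Thmoddderiv} (the Rolle-type argument and the estimate of $|e^{\lambda t}-1|/|\lambda|$), so there is no significant obstacle here — the main task is just to pick the correct subinterval $[x,x+\delta]$ uniformly in $x \in [x_0,x_0+\delta]$, which is possible precisely because we assumed $2\delta<\gamma$.
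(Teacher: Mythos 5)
Your proposal is correct and follows essentially the same route as the paper: apply Theorem \ref{Thmoddderiv} to $g = D^{(2m)}f$ on the sliding interval $[x, x+\delta]\subseteq[x_0,x_0+2\delta]$ with exponents $\lambda_{2m},\lambda_{2m+1}$, enlarge the maxima to $[x_0,x_0+2\delta]$, and absorb the coefficients $4/\delta$ and $2\delta$ into $2\max\{2/\delta,\delta\}$ together with the common exponential factor. No gaps.
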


\begin{proof}
We use the estimate in Theorem \ref{Thmoddderiv}
\[
\left\vert D_{\lambda_{0}}f\left(  a\right)  \right\vert \leq4e^{\left(
\left\vert \lambda_{0}\right\vert +\left\vert \lambda_{1}\right\vert \right)
\left(  b-a\right)  }\frac{\max\left\{  \left\vert f\left(  a\right)
\right\vert ,\left\vert f\left(  b\right)  \right\vert \right\}  }{b-a}%
+2\max_{t\in\left[  a,b\right]  }\left\vert D_{\lambda_{1}}D_{\lambda_{0}%
}f\left(  t\right)  \right\vert \left(  b-a\right)  e^{\left\vert \lambda
_{1}\right\vert \left(  b-a\right)  }%
\]
for the function
\[
D^{\left(  2m\right)  }f\left(  t\right)  =\left(  \frac{d}{dt}-\lambda
_{0}\right)  \cdots\left(  \frac{d}{dt}-\lambda_{2m-1}\right)  f\left(
t\right)
\]
for the exponents $\lambda_{0}:=\lambda_{2m}$ and $\lambda_{1}:=\lambda
_{2m+1}$, and for $a=x$ and $b:=x+\delta$ where it is assumed that
$x\in\left[  x_{0},x_{0}+\delta\right]  $. It follows that $t\in\left[
x,x+\delta\right]  $ is in $\left[  x_{0},x_{0}+2\delta\right]  .$ Then
\begin{align*}
\left\vert \left(  \frac{d}{dx}-\lambda_{2m}\right)  D^{\left(  2m\right)
}f\left(  x\right)  \right\vert  &  \leq\frac{4}{\delta}e^{\left(  \left\vert
\lambda_{2m}\right\vert +\left\vert \lambda_{2m+1}\right\vert \right)  \delta
}\max_{t\in\left[  x_{0},x_{0}+2\delta\right]  }\left\vert D^{\left(
2m\right)  }f\left(  t\right)  \right\vert \\
&  +2\delta\max_{t\in\left[  x_{0},x_{0}+2\delta\right]  }\left\vert
D^{\left(  2m+2\right)  }f\left(  t\right)  \right\vert e^{\left\vert
\lambda_{2m+1}\right\vert \delta}.
\end{align*}
Using the trivial inequality $e^{\left\vert \lambda_{2m+1}\right\vert \delta
}\leq e^{\left(  \left\vert \lambda_{2m}\right\vert +\left\vert \lambda
_{2m+1}\right\vert \right)  \delta}$ the statement follows.
\end{proof}

\end{document}